\documentclass[hidelinks]{siamart251216}

\usepackage[utf8]{inputenc}
\usepackage{amssymb,bm,euscript,mathrsfs,stmaryrd}
\usepackage{graphicx,array,booktabs,multirow}
\usepackage{enumitem}

\newsiamthm{thm}{Theorem}
\newsiamthm{coro}{Corollary}
\newsiamthm{defi}{Definition}
\newsiamthm{prop}{Proposition}
\newsiamthm{remark}{Remark}

\newcommand{\nc}{\newcommand}
\def\softd{{\leavevmode\setbox1=\hbox{d}%
          \hbox to 1.05\wd1{d\kern-0.4ex{\char039}\hss}}}

\nc{\pd}{ \partial }

\newcommand\dd{\mathrm{d}}
\newcommand\dt{\mathrm{d}t}
\newcommand\dx{\mathrm{d}x}
\newcommand\dS{\mathrm{d}s}

\nc{\pdedge}{ \eth _{\cal D} }
\nc{\pdedgei}{ \eth _{{\cal D}_i} }
\nc{\pdedgej}{ \eth _{{\cal D}_j} }

\newcommand{\frakE}{ \mathfrak{E}}
\newcommand{\frakR}{ \mathfrak{R}}
\newcommand{\RE}{ \mathcal{R}_E}

\newcommand{\Grad}{ \nabla }
\newcommand{\Gradh}{ \nabla }

\newcommand{\Div}{ {\rm div} }

\newcommand\Divh{\Div}

\nc{\shkl}{\sum_{\sigma =K|L\in \faces}}

\nc{\Hc}{ {\cal H} }

\nc{\Dt}{D_t}
\nc{\pdt}{\pd_t}
\nc{\eps}{\varepsilon}

\newcommand{\eu}{e_{\vu}}
\newcommand{\deltau}{\delta_{\vu} }
\newcommand{\etau}{\eta_{\vu} }

\newcommand{\PiR}{\Pi_V}
\newcommand{\PiZ}{\Pi_Z}
\newcommand{\PiQ}{\Pi_Q}

\newcommand{\Pitvu}{\tvu_h}

\newcommand{\norm}[1]{\lVert#1\rVert}
\newcommand{\seminorm}[1]{ \interleave#1 \interleave}

\newcommand\vf{\bm{f}}

\newcommand\vr{\varrho}
\newcommand\vn{\bm{n}}
\newcommand\vu{\bm{u}}
\newcommand\vm{\bm{m}}

\newcommand\vv{\bm{v}}
\newcommand\vw{\bm{w}}

\newcommand\vx{x}

\newcommand{\tvu}{\widetilde{\vu}}
\newcommand{\tp}{\widetilde{p}}

\newcommand{\Td}{{\mathbb T}^d}

\newcommand\vuh{\vu_h}
\newcommand\vvh{\vv_h}

\newcommand{\R}{\mathbb{R}}

\newcommand\grid{\mathcal{T}_h}

\newcommand\faces{\mathcal{E}}

\newcommand\edgesK{\faces(K)}
\newcommand\edgesi{\faces_i}

\newcommand\facesint{\mathcal{E}_{\rm int}}
\newcommand\facesext{\mathcal{E}_{\rm ext}}

\nc{\E}{\mathcal{E}}
\nc{\calD}{\mathcal{D}}
\nc{\Di}{\mathcal{D}_i}
\nc{\Dj}{\mathcal{D}_j}
\nc{\Bii}{\mathcal{B}_{i,i}}
\nc{\Bij}{\mathcal{B}_{i,j}}
\nc{\Bji}{\mathcal{B}_{j,i}}
\nc{\Eij}{ \widetilde{\E}_{i,j}}
\nc{\Eji}{ \widetilde{\E}_{j,i}}

\newcommand{\stiei}{\sum_{\sigma \in \edgesi }}

\newcommand\aleq{\lesssim}

\nc{\gradu}{\nabla \vu}

\nc{\bn}{\vn}
\nc{\bfr}{\bm{r}}
\nc{\bfn}{\vn}
\nc{\bfu}{\vu}
\nc{\bfv}{\vv}
\nc{\bfx}{\vx}
\nc{\avu}{\Ov{\vu}}
\nc{\avuh}{\Ov{\vuh}}
\nc{\vU}{\bm{U}}

\nc{\avU}{\overline{\bm{U}}}

\nc{\abs}[1]{\left\lvert#1 \right\rvert}
\newcommand{\avs}[1]{\left\{\!\!\left\{ #1\right\}\!\!\right\}}

\nc{\jump}[1]{\left\llbracket#1\right\rrbracket}
\newcommand{\jumpB}[1]{%
  [\!\llbracket #1 \rrbracket\!]
}

\newcommand{\order}{ \mathcal O}
\newcommand{\Ov}[1]{\overline{#1}}
\newcommand\vvarphi{{\bm\varphi}}
\newcommand\vvarphih{\vvarphi_h}

\nc{\inner}[1]{( #1 )_{\grid}}
\nc{\innerK}[1]{( #1 )_K} 
\nc{\innerS}[1]{\langle #1 \rangle_\sigma}
\nc{\innerfall}[1]{\langle #1 \rangle_{\pd \grid}}
\nc{\innerfint}[1]{\langle #1 \rangle_{\facesint}}

\nc{\intO}[1]{\int_{\Omega} #1 \dx}
\nc{\intOB}[1]{\int_{\Omega} \left(#1 \right) \dx}
\nc{\intTd}[1]{\int_{\Td} #1  \dx}
\nc{\intTdB}[1]{\int_{\Td} \left(#1 \right) \dx}
\nc{\intTO}[1]{\int_0^\tau \int_{\Td} #1 \dx\dt}
\nc{\intTOB}[1]{\int_0^\tau \int_{\Td}\left( #1 \right) \dx\dt}

\nc{\intDsi}[1]{ \stiei \int_{D_\sigma}#1\dx}
\nc{\intT}[1]{ \int_0^\tau #1 \dt}
\nc{\intTB}[1]{ \int_0^\tau \left(#1 \right)\dt}

\nc{\intS}[1]{\int_{\sigma} #1 \dS}
\nc{\intE}[1]{\int_{\faces} #1 \dS}
\nc{\intSh}[1]{\int_{\sigma} #1 \dS}
\nc{\intK}[1]{\int_{K} #1 \dx}
\nc{\intfacesK}[1]{\int_{\pd K} #1 \dS}
\nc{\intfacesint}[1]{\int_{\facesext} #1 \dS}

\nc{\ceil}[1]{\lceil#1\rceil}
\nc{\floor}[1]{\lfloor#1\rfloor}

\nc{\cmag}{\color{magenta}}
 \nc{\cred}{\color{red}}
 
 \nc{\cblue}{\color{blue}}
 \definecolor{grey}{rgb}{0.5,0.5,0.5}
\nc{\cgrey}{\color{grey}}

\headers{Lax convergence theorems for Euler equations}
{M.\ Luk\'a\v{c}ov\'a-Medvi\v{d}ov\'a and B.\ She}

\title{Lax convergence theorems and improved error estimates of a finite element method
for the incompressible Euler system}

\author{M\'aria Luk\'a\v{c}ov\'a-Medvi\v{d}ov\'a\thanks{%
Institute of Mathematics, Johannes Gutenberg-University Mainz,
Staudingerweg 9, 55 128 Mainz, Germany; RMU Co-Affiliate Technical University
Darmstadt, Germany (\email{lukacova@uni-mainz.de}).
M.L.-M.\ gratefully acknowledges the support of DFG Project 5258 3336
funded within Focused Programme SPP 2410 ``Hyperbolic Balance Laws: Complexity,
Scales and Randomness'' and of the Mainz Institute of Multiscale Modeling.
She is also grateful to the Gutenberg Research College for supporting her research.}
\and Bangwei She\thanks{%
Academy for Multidisciplinary Studies, Capital Normal University,
West 3rd Ring North Road 105, 100048 Beijing, P. R. China
(\email{bangweishe@cnu.edu.cn}).
B.S.\ gratefully acknowledges the support of National Natural Science Foundation
of China, grant No.\ 12571433.}}

\begin{document}

\maketitle

\setlength{\abovedisplayskip}{5pt plus 1pt minus 1pt}
\setlength{\belowdisplayskip}{5pt plus 1pt minus 1pt}
\setlength{\abovedisplayshortskip}{0pt plus 3pt }
\setlength{\belowdisplayshortskip}{3pt plus 1pt minus 1pt}

\begin{abstract}
In this paper, we present convergence theorems for numerical solutions of the incompressible Euler equations. The first result is the Lax--Wendroff-type theorem, while the second can be formulated in the framework of the Lax equivalence theorem.
To illustrate their application, we study a finite element method  based on the $RT_k/P_k$ element pair, $k\geq 0$. 
Applying the concept of relative energy, we derive the convergence rates of the numerical method by two different approaches  and obtain an improved convergence rate $\order(h^{k+1/2})$. Finally, we present numerical experiments that illustrate our theoretical results.
\end{abstract}

\begin{keywords}
incompressible Euler equations, finite element methods, dissipative weak solutions,
Lax convergence theorems, error estimates
\end{keywords}

\begin{MSCcodes}
65M12, 65M60, 76M10
\end{MSCcodes}

\section{Introduction}
The celebrated Lax equivalence theorem \cite{LaxR} stands as one of the cornerstones of numerical analysis, establishing that for linear problems stability and consistency of a numerical scheme are equivalent to its convergence. However, its scope is inherently restricted to a linear setting. In this light, it is noteworthy that in recent works by Feireisl and Luk\'a\v{c}ov\'{a}-Medvid'ov\'a \cite{FLMFCM} and  Feireisl et al. \cite{FLM_dmv,FeLMMiSh}  a nonlinear analogue was developed and applied to compressible fluid flows governed by the Euler or Navier--Stokes system. In these works  a new framework for convergence analysis based on the dissipative weak--strong uniqueness principle has been established. It  extends the spirit of Lax's theorem to a much broader and physically relevant class of nonlinear problems.  This framework has been successfully applied to the convergence analysis of various well-known numerical methods for the compressible Euler and the Navier--Stokes--Fourier equations. We refer to \cite{LMY} for the analysis of the Godunov finite volume scheme, 
\cite{LO} for a high-order discontinuous Galerkin scheme, \cite{ALO} for high-order residual distribution schemes, \cite{Kuzmin} for a finite element flux-correction scheme, as well as to \cite{FLM18_brenner}, where the convergence of  a finite volume diffusive upwind method was proved.
In this context, we also mention the results related to the Lax--Wendroff theorem, which states that a consistent approximation converges to a weak solution under the assumption that it is bounded and converges strongly, see \cite{MJMC,GallouetLW2}.

Despite the fact that the Lax-type theorems for the compressible flows are available, similar results in the context of incompressible Euler equations are still largely absent in the literature, in particular if one aims to have  an unconditional result without any a priori assumptions on the numerical solution. The goal of the present paper is to fill this gap by developing a generalised Lax convergence theory for the incompressible Euler equations.
We will consider general, the so-called ``rough'' initial data, meaning that weak solutions of the incompressible Euler equations may be nonunique,  cf.~\cite{DeLellisSzekelyhidi2010,Shnirelman1997}. This is closely connected to the Onsager conjecture and its resolution: below the critical H\"older regularity threshold $C^{1/3}$ weak solutions may dissipate kinetic energy anomalously and are nonunique, whereas solutions belonging to $C^{\alpha}$, $\alpha>1/3$, conserve energy, cf.~\cite{Buckmaster2019,Isett2018}. This implies that numerical analysis for the incompressible Euler equations is delicate.

In the present paper, we will work in the class of \emph{dissipative weak solutions}, which is larger than the class of weak solutions. Nevertheless, \emph{the dissipative weak--strong uniqueness principle} holds, meaning that all dissipative weak solutions coincide with a strong solution emanating from the same initial data on its lifespan.   This result follows from the application of the relative energy functional that measures the distance of two solutions to the Euler equations. We will also use the relative energy to measure the proximity of a numerical solution $\vuh$ and a strong solution $\tvu$
\[
\RE(\vuh|\tvu) = \intO{\frac12 |\vuh-\tvu|^2}.
\]
To the best of our knowledge, this is the first work that establishes both a Lax--Wendroff-type theorem and a nonlinear Lax equivalence principle for the incompressible Euler system, and applies them to a concrete numerical method. 
The novelty of the paper can be summarized as follows.
\begin{itemize}
\item
First, we develop a generalised Lax convergence framework for the incompressible Euler system. We show that any stable and consistent approximation admits a subsequence converging weakly-(*) to a dissipative weak solution and, under strong convergence, recover a Lax--Wendroff-type result.

\item
Second, combining the above convergence framework with the dissipative weak--strong uniqueness principle, we establish a nonlinear analogue of the Lax equivalence principle: whenever a strong solution exists, every stable and consistent approximation converges strongly to this solution.

\item
Third, we derive an abstract relative-energy estimate that quantifies the convergence of consistent approximations towards a strong solution in terms of their stability and consistency errors.

\item
Finally, we apply the abstract theory to the $RT_k/P_k$ finite element approximation of the incompressible Euler system for all $k\geq0$. We prove unconditional convergence towards a dissipative weak solution and, for smooth solutions, establish the error estimate
\[
    \|\vu_h-\widetilde{\vu}\|_{L^\infty(0,T;L^2(\Omega))}
    \lesssim h^{k+1/2}.
\]
In particular, this extends the analysis of Guzm\'an et al.~\cite{GSS} to the lowest-order case $k=0$, for which the rate $\order(h^{1/2})$ is obtained, and improves their theoretical $\order(h^k)$ estimate by half an order for $k\geq1$.
\end{itemize}

Our framework can be generalised to other nonlinear dynamical systems and structure-preserving numerical methods, e.g., those based on the auxiliary variables approach recently proposed by Andrews and Farrell \cite{BF}. 

The rest of this paper is organised as follows. In Section~\ref{euler}, we introduce the incompressible Euler system and its dissipative weak, weak, and strong solutions, followed by the dissipative weak--strong uniqueness theory. Moreover, we introduce the concept of consistent approximations and discuss their  weak limit. We present the main result on the Lax--Wendroff-type theorem and the generalised Lax equivalence theorem for the incompressible Euler system. 
In Section~\ref{Con_MAC}, we study a finite element scheme with $RT_k/P_k$ elements and show its stability and consistency. Applying the above-mentioned generalised Lax-type theorems, we prove the convergence of the $RT_k/P_k$ finite element scheme. Section~\ref{con_rate} is devoted to the study of the  convergence rates using 
the relative energy approach. Section~\ref{con_num} presents results of numerical simulations that illustrate the behaviour of the scheme and confirm our theoretical results.

\section{The incompressible Euler system}\label{euler}

The motion of inviscid, incompressible fluids is governed by the incompressible Euler equations 
\begin{equation}\label{PDE}
\begin{aligned}
        \partial_t \bm u  + \bm u \cdot \Grad  \bm u + \Grad p = 0, 
          \quad     \Div  \bm u =0,
\end{aligned}
\end{equation}
 where $\vu$ and $p$ are the fluid velocity and pressure, respectively. 
 System \eqref{PDE} is considered in the time--space cylinder $[0,T]\times\Omega$, where $\Omega\subset\R^d$, and is supplemented with the initial data
\begin{equation}\label{IC}
   \vu(0,\cdot) = \vu^0  \mbox{ on } \Omega 
\end{equation}
and boundary conditions. Here, we assume either no-flux boundary conditions
\begin{subequations}\label{BC}
\begin{equation}
\label{BC1}
   \vu\cdot \vn =  0  \mbox{ on } \partial \Omega \times (0,T);
\end{equation}
or periodic boundary conditions, i.e., $\Omega$ can be identified with a flat torus
\begin{equation}
\label{BC2}
\Omega = \Td \equiv \left( [0,L] |_{\{ 0,L \}} \right)^d, {\quad L>0.}
\end{equation}
\end{subequations}
We proceed by defining different solutions of the incompressible Euler equations \eqref{PDE}.

\subsection{Solution concepts}\hspace{-0.2cm}
We denote by $L^2_\sigma(\Omega)$  the $L^2$-closure of smooth divergence-free vector fields satisfying the corresponding boundary condition.
\begin{defi}[Dissipative weak solution]\label{defi_DWs}
Let $\Omega \subset \R^d$, $d=2,3$. We say that  $\vu \in  C_{weak}([0,T];L^2(\Omega;\R^d))$ is a dissipative weak (DW) solution of the incompressible Euler system \eqref{PDE}--\eqref{BC} 
 with the initial data $\vu^0 \in L^2_{\sigma}(\Omega)$ provided the following hold:
\begin{subequations}\label{dws}
\begin{itemize}
    \item {\bf Energy inequality.} There is a defect measure $\frakE \in L^{\infty}(0,T; \mathcal{M}^+({\Omega}))$\footnote{Here $\mathcal{M}^+(\Omega)$ denotes the set of positive Radon measures on $\Omega$.} 
    such that the following energy inequality holds for a.a. $\tau\in[0,T]$
    \begin{equation}\label{dw1}
        \intO{\frac{1}{2} |\vu(\tau, \cdot)|^2  } + \int_{\Omega} d\frakE(\tau) \leq \intO{\frac{1}{2} |\vu^0|^2  };
    \end{equation}
    \item {\bf Divergence free.}    It holds for a.a. $t\in(0,T)$ and $\varphi \in C^M( \Omega)$, $M\geq 1$, that
    \begin{equation}\label{dw2}
       \intO{\vu(t) \cdot \Grad\varphi} =0;
    \end{equation}
    \item {\bf Momentum equation.}
    It holds for any $0 \leq \tau \leq T$ and any divergence-free test function $\vvarphi \in C^M([0,T] \times {\Omega}; \R^d), M \geq 1$, satisfying periodic boundary conditions or $\vvarphi \cdot \vn|_{\partial \Omega} =0$ that
    \begin{equation}\label{dw3}
    \begin{aligned}
    \left[ \intO{\!\!\vu \cdot \vvarphi} \right]^{t=\tau}_{t=0}   
    \!\!= \!\! \intT{\!\!\!\!\intOB{\!\vu \cdot \partial_t\vvarphi + (\vu \otimes \vu): \Grad\vvarphi
    }} 
    +\intT{\!\!\!\!\int_{{\Omega}} \!\! \Grad\vvarphi \! : \!d\frakR(t)}
    \end{aligned}
    \end{equation}
      with the Reynolds defect 
     $\frakR \in L^\infty(0,T;\mathcal{M}^+({\Omega}; \R_{\rm sym}^{d\times d}));$
     \item {\bf Compatibility condition.}
     \begin{equation}\label{dw4}
         \underline{d} \frakE \leq \operatorname{tr}\frakR \leq \Ov{d} \frakE \mbox{ {\em for some constants} } 0 \leq \underline{d} \leq \Ov{d}.
     \end{equation}
\end{itemize}
\end{subequations}
\end{defi}

\begin{remark}
    A dissipative weak solution $\vu$ is an admissible (energy-stable) weak solution if the defect vanishes $\frakE=0$.
\end{remark}
\begin{defi}[Strong solution]\label{defi_Cs}
Let $\Omega=\Td$, or let $\Omega\subset\R^d$, $d=2,3$, be a bounded domain with sufficiently smooth boundary. Let $\vu^0\in H^m(\Omega;\R^d)$, $m>d/2+1$, satisfy $\Div\vu^0=0$ together with the corresponding boundary condition. We say that $(\tvu,\tp)$ is a strong solution of the incompressible Euler system if it satisfies \eqref{PDE}--\eqref{BC},
\[
 \tvu\in C([0,T];H^m(\Omega;\R^d))
 \cap C^1([0,T];H^{m-1}(\Omega;\R^d)),
 \quad
 \tp\in C([0,T];H^m(\Omega)\cap L^2_0(\Omega)).
\]
In addition, it holds 
$ -\Delta\tp=\Div\big(\Div(\tvu\otimes\tvu)\big)$ 
with periodic boundary conditions, or with the corresponding Neumann boundary condition in the no-flux case.
\end{defi}
\begin{remark}
For local well-posedness result on a torus or on a smooth bounded domain, we refer to Ebin and Marsden~\cite{EbinMarsden}; Kato~\cite{Kato} studied the Cauchy problem in $\R^3$. 
For the convergence results in Sections~2 and~3, it is sufficient to take $m\geq3$. Indeed, for $d=2,3$, the Sobolev embeddings yield
\[
    \widetilde{\vu}
    \in C([0,T];W^{1,\infty}(\Omega;\R^d))
    \cap C^1([0,T];C(\overline\Omega;\R^d)).
\]
This is the regularity required in the weak--strong uniqueness
principle and in the abstract relative-energy estimate.

For the error estimates in Section~4, the regularity depends on
the polynomial degree $k$. It is sufficient to assume 
$m\geq k+3$. 
Then
\[
    \widetilde{\vu}
    \in
    H^1(0,T;H^{k+1}(\Omega;\R^d))
    \cap
    L^\infty(0,T;W^{1,\infty}(\Omega;\R^d))
    \cap
    L^2(0,T;W^{k+1,\infty}(\Omega;\R^d)),
\]
which provides the regularity required in Theorems~\ref{thm_r1}
and~\ref{thm_r2}.
\end{remark}

Note that a DW solution coincides with the strong solution as long as the latter exists, see the proof in Appendix \ref{ap_ws} and also a similar result of Wiedemann \cite{Wiede}. 
\begin{lemma}
[Dissipative weak--strong uniqueness]\label{lem_wsuni}
Let $\Omega=\Td$, or let $\Omega\subset \R^d$, $d=2,3$, be a bounded domain with sufficiently smooth boundary. Suppose that $\vu$ is a dissipative weak (DW) solution of the incompressible Euler system in the sense of Definition~\ref{defi_DWs}.
 Let $\tvu$ be a strong solution of the same system and the same initial data in the sense of Definition~\ref{defi_Cs} with $m\geq 3$. 
Then 
$$ \vu=\tvu\quad \mbox{in }(0,T)\times\Omega,\qquad \frakE=\frakR=0.$$
\end{lemma}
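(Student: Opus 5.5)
The plan is a relative energy argument comparing the DW solution $\vu$ with the strong solution $\tvu$ through
\[
\RE(\vu|\tvu)(\tau) = \intO{\tfrac12 |\vu-\tvu|^2}(\tau) + \int_\Omega d\frakE(\tau).
\]
Expanding the square gives $\tfrac12|\vu|^2 - \vu\cdot\tvu + \tfrac12|\tvu|^2$, and we treat the three parts separately. The term $\tfrac12|\vu|^2$ together with $\frakE$ is controlled by the energy inequality \eqref{dw1}. The term $\tfrac12|\tvu|^2$ is conserved by the strong solution: test its equation with $\tvu$, use $\Div\tvu=0$ and the boundary conditions, and the convective and pressure terms vanish. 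For the cross term $\intO{\vu\cdot\tvu}$ we use the momentum equation \eqref{dw3} with the test function $\vvarphi=\tvu$. This is admissible because $\tvu$ is divergence free, satisfies the boundary condition, and (for $m\ge3$) lies in $C^1([0,T];C(\overline\Omega))\cap C([0,T];W^{1,\infty})$. The definition asks for $C^M$ test functions, so we first mollify in space and time (on the torus) or use a density argument, and then pass to the limit using only the regularity stated in the Remark. This density step needs care but is routine.

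Combining the three parts and using $\vu(0)=\tvu(0)$, we obtain
\[
\RE(\vu|\tvu)(\tau) \le -\intT{\intOB{\vu\cdot\pd_t\tvu + (\vu\otimes\vu):\Grad\tvu}} - \intT{\int_\Omega \Grad\tvu : d\frakR} + \intT{\intO{\tvu\cdot\pd_t\tvu}}.
\]
Next we substitute $\pd_t\tvu = -\tvu\cdot\Grad\tvu - \Grad\tp$. The pressure term $\intO{(\vu-\tvu)\cdot\Grad\tp}$ vanishes by \eqref{dw2} applied to $\varphi=\tp$ (again via density, since $\tp\in H^m$) and by $\Div\tvu=0$. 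The usual algebra then rewrites the convective terms as
\[
-\intO{\big((\vu-\tvu)\otimes(\vu-\tvu)\big):\Grad\tvu}.
\]
This uses $\intO{(\tvu\otimes\vu):\Grad\tvu} = \intO{\vu\cdot\Grad\tfrac12|\tvu|^2}=0$ and $\intO{(\tvu\cdot\Grad\tvu)\cdot\tvu}=0$, both consequences of the divergence-free conditions on $\vu$ and $\tvu$ with the boundary condition.

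The remainder is then bounded by $\norm{\Grad\tvu}_{L^\infty}$ times $\intO{|\vu-\tvu|^2}$. The Reynolds term satisfies $\abs{\int\Grad\tvu:d\frakR}\le \norm{\Grad\tvu}_{L^\infty}\int d\,\mathrm{tr}\frakR$, because $\frakR$ is positive semidefinite. The compatibility condition \eqref{dw4} gives $\mathrm{tr}\frakR\le\Ov{d}\frakE$. Hence
\[
\RE(\vu|\tvu)(\tau)\le C\big(\norm{\Grad\tvu}_{L^\infty_{t,x}}\big)\intT{\RE(\vu|\tvu)(t)},
\]
and Gronwall yields $\RE\equiv0$ for a.e. $\tau$. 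This gives $\vu=\tvu$ and $\frakE=0$, and then $0\le\mathrm{tr}\frakR\le\Ov d\frakE=0$ together with positivity gives $\frakR=0$.

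The main obstacle is justifying $\tvu$ (and $\tp$) as test functions when the definition requires $C^M$ smoothness, and handling the "a.a. $\tau$" versus "all $\tau$" issue in the time integration. I would address the first by mollification combined with the uniform bounds from $m\ge3$. For the second, weak continuity of $\vu$ together with lower semicontinuity lets the combined inequality hold for a.e. $\tau$, which suffices for Gronwall.
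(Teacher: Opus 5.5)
Your proposal is correct and follows essentially the same relative-energy argument as the paper. The shared steps are: the energy inequality for $\vu$ together with the energy balance of $\tvu$; the momentum equation tested with $\tvu$; elimination of the pressure term and of $\int_\Omega \vu\cdot\nabla\frac{|\tvu|^2}{2}\,\dx$ through the weak divergence-free condition; the quadratic remainder bounded by $\|\nabla\tvu\|_{L^\infty}$; the Reynolds term controlled via $\operatorname{tr}\frakR\le\overline{d}\,\frakE$; and Gronwall.

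The remaining differences are cosmetic. You include $\int_\Omega d\frakE$ in the relative energy from the start, and you add remarks on test-function regularity and on the a.e.-$\tau$ issue, which the paper passes over. On the first of these remarks: for $m\ge3$ one already has $\tvu\in C^1([0,T]\times\overline\Omega)$, so no mollification is needed if $M=1$ is admissible.
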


\subsection{Consistent approximations}
\begin{defi}[Consistent approximations]\label{defi_CA}
Let $\vu^0\in  L^2_\sigma(\Omega)$. We say that a sequence $\{\vuh\}_{h\searrow0}$, $\vuh \in L^\infty(0,T; L^2(\Omega;\R^d)
)$, 
is a consistent approximation of the incompressible Euler system \eqref{PDE}--\eqref{BC} if
\[
 \vuh(0)\longrightarrow\vu^0\quad\mbox{weakly in }L^2(\Omega;\R^d)
\]
and  the following stability and consistency conditions hold. \\
\begin{subequations}\label{CA}
\noindent{\bf i) The stability condition.}
\begin{equation}\label{CA1}
 \intO{|\vuh(t,\cdot)|^2}\leq\intO{|\vu^0|^2}  + e_E(h) 
\end{equation}
 for a.a. $t \in (0,T)$, where $|e_E(h)| \to 0$ as $h \to 0$;\\
\noindent{\bf ii) The consistency conditions.}
\begin{itemize}
\item {\bf Divergence free.} It holds for any  $\psi\in C^M([0,T]\times \overline{\Omega})$, $M\geq1$, that
\begin{equation}\label{CA2}
 \intO{\vuh(t)\cdot\Grad\psi(t)}=e_{\vr}(t,h,\psi)
 \quad\mbox{for a.a. }t\in(0,T),
\end{equation}
where 
$
 \int_0^T|e_{\vr}(t,h,\psi)|\,\dt\longrightarrow0
 \quad\mbox{as }h\to0.
$
\item {\bf Momentum equation.} Let $\vvarphi\in C^M([0,T]\times\Omega;\R^d)$, $M\geq1$, be divergence-free and satisfy the periodic boundary condition or $\vvarphi\cdot\vn|_{\partial\Omega}=0$. Then, for any $\tau\in[0,T]$,
\begin{equation}\label{CA3}
 \left[\intO{\vuh\cdot\vvarphi}\right]_0^\tau
 =\intT{\intOB{\vuh\cdot\partial_t\vvarphi+(\vuh\otimes\vuh):\Grad\vvarphi}}
 +e_{\vm}(\tau,h,\vvarphi),
\end{equation}
where 
$
|e_{\vm}(\tau,h,\vvarphi)|\longrightarrow0
 \quad\mbox{as }h\to0.
$
\end{itemize}
\end{subequations}
\end{defi}

\begin{remark}
Note that stability condition \eqref{CA1} expresses that the kinetic energy is either conserved or may dissipate over time, but its value is bounded by the initial energy. For the purposes of the present study, it is sufficient to control only the kinetic energy. However, one may be interested in controlling other structure-preserving variables, e.g.,~the helicity as proposed in \cite{R}.    
\end{remark}

\subsection{Convergence}\label{GL}

In this section, we show the existence of a DW solution that can be obtained as a weak limit of a sequence of consistent approximations.  We derive the Lax--Wendroff-type theorem for consistent approximations and also prove the generalised Lax equivalence theorem for the incompressible Euler equations.  Finally, we present a result on the abstract error estimates for the consistent approximations of the incompressible Euler system. These results will provide a theoretical framework for the convergence and error analysis of the $RT_k/P_k$ finite element scheme presented in Sections~\ref{Con_MAC} and~\ref{con_rate}.

\begin{lemma}[Unconditional limit of consistent approximations]\label{lem_con_dw}
Let $\{\vuh\}_{h \searrow0}$ be a consistent approximation of the incompressible Euler system \eqref{PDE}--\eqref{BC} in the sense of Definition \ref{defi_CA}. 
    Then there exists a subsequence of $\vuh$ (not relabelled), such that
\begin{equation}\label{N1}
\vu_h \rightarrow \vu \mbox{ weakly-(*) in }  L^\infty(0,T; L^2(\Omega;R^d)),
\end{equation}
where $\vu$ is a DW solution 
in the sense of Definition \ref{defi_DWs}. 
\end{lemma}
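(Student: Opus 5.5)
The plan is the standard compactness-plus-defect argument. First, the stability condition \eqref{CA1} gives a uniform bound on $\vuh$ in $L^\infty(0,T;L^2(\Omega;\R^d))$. Hence $\vuh\otimes\vuh$ is bounded in $L^\infty(0,T;L^1(\Omega;\R^{d\times d}))$, and $\frac12|\vuh|^2$ is bounded in $L^\infty(0,T;L^1(\Omega))$. We regard both as bounded sequences in $L^\infty_{weak-(*)}(0,T;\mathcal{M}(\overline\Omega))$. By Banach--Alaoglu we extract a subsequence with
\[
\vuh \to \vu \mbox{ weakly-(*) in } L^\infty(0,T;L^2), \qquad \vuh\otimes\vuh \to \Ov{\vu\otimes\vu}, \qquad \tfrac12|\vuh|^2\to \Ov{\tfrac12|\vu|^2}
\]
weakly-(*) in $L^\infty(0,T;\mathcal{M}(\overline\Omega))$. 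We then set
\[
\frakR := \Ov{\vu\otimes\vu} - \vu\otimes\vu, \qquad \frakE := \Ov{\tfrac12|\vu|^2} - \tfrac12|\vu|^2 .
\]
Since $(\xi,M)\mapsto M$ restricted to $M=\xi\otimes\xi$ has the convex structure $\xi\otimes\xi$ (i.e.\ $z\mapsto (z\cdot a)^2$ is convex for every $a$), weak lower semicontinuity yields $\frakR\in L^\infty(0,T;\mathcal{M}^+(\Omega;\R^{d\times d}_{\rm sym}))$ and $\frakE\geq0$. Moreover $\operatorname{tr}\frakR = 2\frakE$, so \eqref{dw4} holds with $\underline d=\Ov d=2$.

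Second, I would pass to the limit in the consistency relations. For \eqref{CA2}, integrating against a time cut-off and using $\int_0^T|e_\vr|\,\dt\to0$ gives \eqref{dw2} for a.a.\ $t$; a countable dense family of $\psi$ suffices. For \eqref{CA3}, the right-hand side converges to $\intT{\intOB{\vu\cdot\pdt\vvarphi+\Ov{\vu\otimes\vu}:\Grad\vvarphi}}$, which after splitting off $\frakR$ is exactly the right-hand side of \eqref{dw3}. Note that $\Grad\vvarphi$ is continuous up to the boundary, so no mass is lost on $\partial\Omega$ if we work on $\overline\Omega$.

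The step I expect to be the main obstacle is the time-continuity and the pointwise-in-$\tau$ identification of the left-hand side, namely showing $\vu\in C_{weak}([0,T];L^2)$ and $\intO{\vuh(\tau)\cdot\vvarphi(\tau)}\to\intO{\vu(\tau)\cdot\vvarphi(\tau)}$ for every $\tau$. I would proceed as follows.
\begin{itemize}
\item From \eqref{CA3}, the functions $t\mapsto\intO{\vuh\cdot\vvarphi}$, for fixed divergence-free $\vvarphi$ independent of time, are uniformly bounded and equicontinuous up to $o(1)$ as $h\to0$.
\item An Arzel\`a--Ascoli argument on a countable dense set of such $\vvarphi$ then gives convergence uniformly in $t$ of the solenoidal part (the Helmholtz projection) of $\vuh$ in $C_{weak}([0,T];L^2)$.
\item The gradient part of $\vuh$ tends to zero weakly by \eqref{CA2}, so the limit $\vu$ is solenoidal and lies in $C_{weak}$ after modification on a null set.
\item At $t=0$ we use $\vuh(0)\rightharpoonup\vu^0$, which gives $\vu(0)=\vu^0$.
\end{itemize}
Time-dependent test functions are handled by the product rule together with density.

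Finally, for the energy inequality: take $\psi\in C_c((\tau-\delta,\tau+\delta))$ nonnegative and integrate \eqref{CA1} in time. Passing to the limit gives $\intO{\Ov{\tfrac12|\vu|^2}}\leq \intO{\tfrac12|\vu^0|^2}$ for a.a.\ $\tau$, since $e_E\to0$. Recall that \eqref{CA1} is written for $|\vuh|^2$, so it is divided by two. Rewriting the left-hand side as $\intO{\frac12|\vu|^2}+\int_\Omega d\frakE(\tau)$ yields \eqref{dw1}.
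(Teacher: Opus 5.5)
Your proposal is correct and shares the paper's overall strategy: Banach--Alaoglu, the defects $\frakR=\Ov{\vu\otimes\vu}-\vu\otimes\vu$ and $\frakE=\Ov{\frac12|\vu|^2}-\frac12|\vu|^2$, and passage to the limit in \eqref{CA2}--\eqref{CA3}. Two steps are done differently.

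\textbf{Compatibility condition.} For \eqref{dw4} the paper cites \cite[Proposition 5.3]{FeLMMiSh}, a general tool built for settings where energy and convective flux are not algebraically linked. You note instead that $|\vuh|^2=\operatorname{tr}(\vuh\otimes\vuh)$, so $\operatorname{tr}\frakR=2\frakE$ exactly and \eqref{dw4} holds with $\underline{d}=\Ov{d}=2$. You also justify $\frakR\geq0$ by weak lower semicontinuity of $\vv\mapsto\int\phi\,(\vv\cdot a)^2$ for $\phi\geq0$, which the paper leaves implicit. Your route is more elementary and gives sharper constants.

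\textbf{Weak continuity.} The paper tests the limit relation with $\eta(t)\varphi(x)$ to get $\frac{\mathrm d}{\mathrm dt}(\vu,\varphi)\in L^\infty(0,T)$, then recovers $\vu(0)=\vu^0$ by a separate cut-off argument. Your Arzel\`a--Ascoli argument works at the level of $\vuh$. It is longer, but it identifies $\intO{\vuh(\tau)\cdot\vvarphi(\tau)}$ at every $\tau$, which makes the pointwise-in-$\tau$ form of \eqref{dw3} and the initial condition explicit.

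\textbf{One caveat.} Definition~\ref{defi_CA} only gives $e_{\vm}(\tau,h,\vvarphi)\to0$ for each fixed $\tau$, not uniformly, and \eqref{CA1} only holds for a.a.\ $t$. So ``equicontinuous up to $o(1)$'' and convergence ``uniformly in $t$'' are not available in general. Nothing is lost, though:
\begin{itemize}
\item The integrated part $g_h(\tau)=\intO{\vuh(0)\cdot\vvarphi}+\int_0^\tau\intO{(\vuh\otimes\vuh):\Grad\vvarphi}\,\dt$ converges for every $\tau$ to a Lipschitz function $g$, by weak-(*) convergence alone.
\item \eqref{CA3} then gives $\intO{\vuh(\tau)\cdot\vvarphi}\to g(\tau)$ for every $\tau$.
\item Dominated convergence, using \eqref{CA1} a.e., identifies $g(t)=\intO{\vu(t)\cdot\vvarphi}$ for a.a.\ $t$.
\end{itemize}
This is all your argument actually needs.
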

\begin{proof}
The convergence in linear terms with respect to $\vuh$ in \eqref{CA2}
and \eqref{CA3} is straightforward. Weak convergence in the convective
term yields
\[
\vuh\otimes\vuh\to\overline{\vu\otimes\vu}
\quad\mbox{weakly-(*) in }
L^\infty(0,T;\mathcal M^+(\Omega;\R^{d\times d}_{\rm sym})).
\]
Consequently, the Reynolds defect is defined by
\[
\frakR=\overline{\vu\otimes\vu}-\vu\otimes\vu .
\]
Similarly, the weak convergence of the energy term yields the energy
defect
\[
\frakE=
\overline{\frac12|\vu|^2}-\frac12|\vu|^2
\in L^\infty(0,T;\mathcal M^+(\Omega)).
\]
Finally, applying \cite[Proposition 5.3]{FeLMMiSh} with
$G(\vuh(t))=\intO{\frac12|\vuh|^2}$ and
$F(\vuh(t))=\intO{\vuh\otimes\vuh}$ for a.a. $t\in(0,T)$,
we obtain the compatibility condition \eqref{dw4}. 

It remains to identify a weakly continuous representative of the
limit. Testing the limiting momentum relation by
$\eta(t)\varphi(x)$, with $\eta\in C_c^1(0,T)$ and $\varphi$ smooth,
divergence-free and satisfying the corresponding boundary condition,
we obtain
\[
    \frac{\mathrm d}{\mathrm dt}(\vu,\varphi)\in L^\infty(0,T).
\]
Hence $t\mapsto(\vu(t),\varphi)$ is a continuous mapping. 
Since smooth solenoidal vector fields are dense in
$L_\sigma^2(\Omega)$, the uniform $L^2$ bound allows us to extend
this continuity to every $\varphi\in L_\sigma^2(\Omega)$. Hence,
\[
    \vu\in C_{\rm weak}([0,T];L_\sigma^2(\Omega)).
\]
Finally, a standard time cut-off argument together with the $L^2$-weak convergence of the initial data, 
we obtain $\vu(0)=\vu^0$. 
This finishes the proof. 
\end{proof}

Having a sequence of consistent approximations, the following two results, the Lax--Wendroff-type theorem and the generalised Lax equivalence theorem, will be derived. 

\begin{thm}[Lax--Wendroff theorem]\label{thm_LW}
Let $\{\vu_h\}_{h\searrow 0}$  satisfy the consistency formulation of the divergence-free and momentum equations, \eqref{CA2} and \eqref{CA3}. 
If $\vuh$ is bounded in the sense of \eqref{CA1} and converges strongly, i.e., $\norm{\vuh -\vu}_{L^\infty(0,T;L^2(\Omega;\R^d))}$ $\to 0$ as $h\to0$.  Then the limit $\vu$ is a weak solution.
\end{thm}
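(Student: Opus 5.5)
We pass to the limit $h\to0$ directly in the consistency relations \eqref{CA2} and \eqref{CA3}. Strong convergence in $L^\infty(0,T;L^2)$ is enough to handle the quadratic convective term, so no Reynolds defect appears. The argument mirrors the proof of Lemma~\ref{lem_con_dw}, with the weak-(*) limit $\overline{\vu\otimes\vu}$ replaced by $\vu\otimes\vu$.

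\emph{Divergence-free condition.} For a fixed $\psi\in C^M([0,T]\times\overline\Omega)$ we integrate \eqref{CA2} in time against an arbitrary $\eta\in L^\infty(0,T)$. The linear term converges by the strong convergence of $\vuh$, and the error term vanishes because $\int_0^T|e_{\vr}|\,\dt\to0$. Hence $\intO{\vu(t)\cdot\Grad\psi(t)}=0$ for a.a.\ $t$. Restricting to time-independent $\psi$ from a countable dense family, and using the $L^2$ bound, we obtain \eqref{dw2} for a.a.\ $t$ and all $\varphi$. In particular, $\vu(t)\in L^2_\sigma(\Omega)$.

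\emph{Momentum equation.} Fix a divergence-free test function $\vvarphi$ satisfying the boundary condition, and fix $\tau\in[0,T]$. The terms $\intO{\vuh(\tau)\cdot\vvarphi(\tau)}$ and $\intT{\intO{\vuh\cdot\pdt\vvarphi}}$ converge because $\sup_t\|\vuh-\vu\|_{L^2}\to0$. For the convective term we use the identity
\[
\vuh\otimes\vuh-\vu\otimes\vu=(\vuh-\vu)\otimes\vuh+\vu\otimes(\vuh-\vu).
\]
Together with the uniform bound \eqref{CA1} on $\|\vuh\|_{L^\infty L^2}$ and $\Grad\vvarphi\in L^\infty$, Cauchy--Schwarz gives
\[
\Big|\intT{\intO{(\vuh\otimes\vuh-\vu\otimes\vu):\Grad\vvarphi}}\Big|\aleq \|\Grad\vvarphi\|_{L^\infty}\,\|\vuh-\vu\|_{L^\infty L^2}\to0 .
\]
Since $e_{\vm}(\tau,h,\vvarphi)\to0$, we obtain \eqref{dw3} with $\frakR=0$, for every $\tau$.

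\emph{Initial datum and regularity.} The remaining points need some care, and I expect this to be the main (if mild) obstacle. Because the convergence is uniform in time, $\vuh(0)\to\vu(0)$ strongly in $L^2$. By Definition~\ref{defi_CA}, however, only $\vuh(0)\rightharpoonup\vu^0$ weakly is assumed. Uniqueness of weak limits then gives $\vu(0)=\vu^0$.

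Continuity $\vu\in C_{weak}([0,T];L^2)$ follows exactly as in Lemma~\ref{lem_con_dw}. The limiting momentum relation shows that $t\mapsto(\vu(t),\vvarphi)$ is Lipschitz for smooth solenoidal $\vvarphi$. Density together with the uniform $L^2$ bound then extends continuity to all of $L^2_\sigma$. If $L^\infty(0,T;L^2)$ is understood only up to a.e.\ equivalence, we simply pick this continuous representative. If strong continuity is wanted, we note that it is in fact inherited from uniform limits of continuous-in-time discrete solutions, but this is not needed.

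\emph{Energy admissibility.} Lower semicontinuity, indeed strong convergence, in \eqref{CA1} with $e_E(h)\to0$ yields $\intO{|\vu(t)|^2}\le\intO{|\vu^0|^2}$. Thus \eqref{dw1} holds with $\frakE=0$, and $\vu$ is an energy-admissible weak solution, which completes the proof.
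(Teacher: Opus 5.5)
Your proposal is correct and takes the same route as the paper. The paper's proof is the one-line observation that strong convergence lets you pass to the limit in \eqref{CA1}--\eqref{CA3} with $\frakE=\frakR=0$; you supply the details it omits. One small fix: if $\norm{\cdot}_{L^\infty(0,T;L^2)}$ is an essential supremum, it does not control $\vuh(0)-\vu(0)$, even after choosing a continuous representative of $\vu$. So identify $\vu(0)=\vu^0$ instead by passing to the limit in the $t=0$ term of \eqref{CA3} using $\vuh(0)\rightharpoonup\vu^0$, then letting $\tau\to0$ along the weakly continuous representative and using $\vu(0),\vu^0\in L^2_\sigma(\Omega)$, as in Lemma~\ref{lem_con_dw}.
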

\begin{proof}
As $\vuh$ converges strongly, passing to the limit $h \to 0$ in \eqref{CA1}--\eqref{CA3},  we obtain  \eqref{dw1}--\eqref{dw3} with zero defects, which completes the proof.   
\end{proof}

We are now ready to prove a generalised version of the Lax equivalence theorem for the incompressible Euler system.
\begin{thm}[A generalised Lax equivalence theorem]\label{thm_LE}
Let $\{\vu_h\}_{h\searrow0}$ satisfy the consistency formulation of the divergence-free and momentum equations, \eqref{CA2} and \eqref{CA3}.
Assume that the Euler system \eqref{PDE}--\eqref{BC} admits a strong solution $\tvu$ in the sense of Definition~\ref{defi_Cs}. Then
\begin{itemize}
\item[(i)] If $\vuh$ is stable in the sense of \eqref{CA1}, then
\begin{equation}\label{conv}
 \vuh\longrightarrow\tvu
 \quad\mbox{strongly in }L^q(0,T;L^2(\Omega;\R^d)),
 \qquad 1\leq q<\infty.
\end{equation}
\item[(ii)] If $\vuh$ converges in the sense of \eqref{conv}, then it is stable in $L^q(0,T;L^2(\Omega;\R^d))$.
\end{itemize}
\end{thm}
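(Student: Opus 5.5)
For (i) I will combine Lemma~\ref{lem_con_dw} with Lemma~\ref{lem_wsuni}. Since $\vuh$ satisfies \eqref{CA1}--\eqref{CA3}, it is a consistent approximation; I will take the initial data weakly convergent as in Definition~\ref{defi_CA}. Lemma~\ref{lem_con_dw} then gives a subsequence with $\vuh\to\vu$ weakly-(*) in $L^\infty(0,T;L^2)$, where $\vu$ is a DW solution with defects $\frakE,\frakR$. The strong solution $\tvu$ has the same initial data, so Lemma~\ref{lem_wsuni} gives $\vu=\tvu$ and $\frakE=\frakR=0$. The limit is independent of the subsequence, so the whole sequence converges weakly-(*) to $\tvu$.

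The main step is upgrading weak to strong convergence. I will use the vanishing of the energy defect. By construction in Lemma~\ref{lem_con_dw}, $\frakE=\overline{\frac12|\vu|^2}-\frac12|\tvu|^2=0$. Hence $\frac12|\vuh|^2\to\frac12|\tvu|^2$ weakly-(*) in $L^\infty(0,T;\mathcal M^+(\Omega))$. Alternatively, the trace of $\frakR=0$ gives the same conclusion. Testing with $\eta(t)\cdot 1$ for $\eta\in L^1(0,T)$ gives
\[
\intT{\eta\,\|\vuh\|_{L^2}^2}\to\intT{\eta\,\|\tvu\|_{L^2}^2}.
\]
Expanding $\|\vuh-\tvu\|^2=\|\vuh\|^2-2(\vuh,\tvu)+\|\tvu\|^2$ and using the weak convergence of $\vuh$ against $\tvu\in L^1(0,T;L^2)$, I get
\[
\int_0^T\|\vuh-\tvu\|_{L^2}^2\,\dt\to0,
\]
that is, strong convergence in $L^2(0,T;L^2)$.

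If this measure-theoretic route is too delicate, because of concentration at the boundary for the constant test function on a non-compact $\Omega$, I will argue directly instead. The energy bound \eqref{CA1} with $e_E\to0$ gives $\limsup\|\vuh(t)\|^2\le\|\vu^0\|^2=\|\tvu(t)\|^2$, since strong solutions conserve energy. Weak lower semicontinuity gives the matching $\liminf$ in the time-integrated sense, and the two together give the same $L^2$-in-time conclusion.

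Next I interpolate to all $q<\infty$. The sequence is bounded in $L^\infty(0,T;L^2)$ by \eqref{CA1}, so $\|\vuh-\tvu\|_{L^2}$ is uniformly bounded in $t$. Strong convergence in $L^1(0,T;L^2)$ plus this uniform bound yields convergence in every $L^q(0,T;L^2)$ with $1\le q<\infty$, via $\int f^q\le\|f\|_\infty^{q-1}\int f$.

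For (ii), convergence in $L^q(0,T;L^2)$ means $\|\vuh\|_{L^q(0,T;L^2)}\to\|\tvu\|_{L^q(0,T;L^2)}<\infty$. So the norms are bounded uniformly in $h$ for $h$ small, and finitely many remaining terms are bounded trivially. This is stability in $L^q(0,T;L^2)$ and is immediate. The real obstacle is the weak-to-strong upgrade in (i), namely justifying that vanishing defects force convergence of the $L^2$ norms.
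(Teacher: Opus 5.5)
Your proposal is correct and follows the same route as the paper. Lemma~\ref{lem_con_dw} identifies the weak-(*) limit as a DW solution, Lemma~\ref{lem_wsuni} shows it equals $\tvu$ with vanishing defects, and (ii) is immediate. The paper simply asserts strong convergence after identifying the limit, whereas you make the upgrade explicit. Your direct version is the cleanest way to fill that step: combine $\limsup_{h\to0}\int_0^T\norm{\vuh}_{L^2}^2\,\dt\leq T\norm{\vu^0}_{L^2}^2=\int_0^T\norm{\tvu}_{L^2}^2\,\dt$ (from \eqref{CA1} and energy conservation of $\tvu$) with weak convergence, then interpolate using the uniform $L^\infty(0,T;L^2)$ bound. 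It also sidesteps the boundary-concentration issue you raised.
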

\begin{proof}
The second item is obvious. The proof of the first item is more involved. If a sequence $\{\vu_h\}_{h\searrow0}$ satisfies both the consistency formulation and the stability condition as stated in item~(i), then it is a consistent approximation according to Definition~\ref{defi_CA}. Thus, it converges to a DW solution; see Lemma~\ref{lem_con_dw}. Furthermore, since a DW solution coincides with the strong solution; see Lemma~\ref{lem_wsuni}, we conclude that the sequence converges strongly to the strong solution. This completes the proof.
\end{proof}
\begin{remark}
Let us briefly summarise different convergence scenarios.
\begin{itemize}[leftmargin=2.5em,labelsep=0.5em]
\item The convergence of a consistent approximation towards a DW solution is unconditional.
\item If the above convergence is strong, then the limit is either a weak solution or a strong solution.
\item If a strong solution exists, then any consistent approximation converges strongly to this solution.
\end{itemize}
\end{remark}

We proceed by presenting a framework of the abstract  error estimates based on the relative energy inequality.

\begin{thm}[Abstract error estimates]\label{thm_re}
Let $\{\vuh\}_{h\searrow 0}$ be a consistent approximation of the incompressible Euler system \eqref{PDE}--\eqref{BC} in the sense of Definition \ref{defi_CA}. 
Let $(\tvu,\tp)$ be a strong solution of the same problem in the sense of Definition \ref{defi_Cs}. Then 
\begin{multline*}
\norm{\vuh -\tvu}_{L^\infty(0,T;L^2(\Omega;\R^d))}^2 = 2 \operatorname*{ess\,sup}_{t\in(0,T)} \RE(\vuh|\tvu)(t) 
    \\  \leq e^{c_1 T} \bigg( c_2 \bigg( \sup_{0\leq\tau\leq T}|e_{\vm}(\tau,h,\tvu)|
    +\int_0^T\left|e_{\vr}\left(t,h,\frac{|\tvu|^2}{2}-\tp\right)\right|\,\dt\bigg) + \RE(\vuh|\tvu)(0)\bigg), 
\end{multline*}
where the constants $c_1$ and $c_2$ depend on $\norm{\tvu}_{L^\infty(0,T; W^{1,\infty}(\Omega;\R^d))}$, and $e_{\vr}$ and $e_{\vm}$ are the consistency errors of the divergence-free and momentum equations, see Definition \ref{defi_CA}.
\end{thm}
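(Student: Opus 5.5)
The plan is to derive a relative energy inequality for $\RE(\vuh|\tvu)$ directly from the three conditions of Definition~\ref{defi_CA}, and then close it with Gronwall. First I expand
\[
\RE(\vuh|\tvu)(\tau)=\intO{\tfrac12|\vuh|^2}-\intO{\vuh\cdot\tvu}+\intO{\tfrac12|\tvu|^2}.
\]
The three terms are handled separately.

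\emph{The kinetic energy of $\vuh$.} Use the stability condition \eqref{CA1} and compare with the initial energy. To obtain $\RE(\vuh|\tvu)(0)$ on the right-hand side rather than a mismatch, I will read \eqref{CA1} as the energy bound $\intO{\frac12|\vuh(\tau)|^2}\le \intO{\frac12|\vuh(0)|^2}$ up to the error $e_E(h)$. Any residual initial-data discrepancy is absorbed into $\RE(\vuh|\tvu)(0)$.

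\emph{The cross term.} Apply the momentum consistency \eqref{CA3} with $\vvarphi=\tvu$, which is admissible since $\tvu$ is divergence-free, satisfies the boundary condition and is $C^1$ in time by the assumed regularity. This gives
\[
\Big[\intO{\vuh\cdot\tvu}\Big]_0^\tau=\intT{\intOB{\vuh\cdot\pdt\tvu+(\vuh\otimes\vuh):\Grad\tvu}}+e_{\vm}(\tau,h,\tvu).
\]

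\emph{The kinetic energy of $\tvu$.} Since $\tvu$ is a strong solution, it conserves energy, so this term is constant in time.

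Next I substitute the equation $\pdt\tvu=-\tvu\cdot\Grad\tvu-\Grad\tp$. This leaves the term $\intO{\vuh\cdot\pdt\tvu}$ containing $-\intO{\vuh\cdot(\tvu\cdot\Grad\tvu)}$ and $-\intO{\vuh\cdot\Grad\tp}$. The key identity is $\tvu\cdot\Grad\tvu=\Grad\frac{|\tvu|^2}{2}-\tvu\times\mathrm{curl}\,\tvu$, or more conveniently the algebraic rearrangement
\[
(\vuh\otimes\vuh):\Grad\tvu-\vuh\cdot(\tvu\cdot\Grad\tvu)=((\vuh-\tvu)\otimes(\vuh-\tvu)):\Grad\tvu+\tvu\cdot\Grad\tvu\cdot\vuh-\ldots
\]
I will arrange it so that
\[
\text{all terms}=((\vuh-\tvu)\otimes(\vuh-\tvu)):\Grad\tvu+\vuh\cdot\Grad\Big(\tfrac{|\tvu|^2}{2}-\tp\Big)\cdot(\pm1),
\]
where the pieces involving $\vuh\otimes\tvu:\Grad\tvu$ cancel against $\vuh\cdot\Grad\frac{|\tvu|^2}{2}$ using $\Div\tvu=0$.

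The remaining gradient term is exactly the divergence consistency \eqref{CA2} with $\psi=\frac{|\tvu|^2}{2}-\tp$. That $\psi$ lies in $C^1$ follows from the regularity remark; if it is not $C^M$, a density argument is needed. This produces $\int_0^T|e_{\vr}(t,h,\frac{|\tvu|^2}{2}-\tp)|\,\dt$.

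The quadratic term is bounded by $\norm{\Grad\tvu}_{L^\infty}\,2\RE(\vuh|\tvu)$. This yields
\[
\RE(\tau)\le \RE(0)+c\Big(\sup_\tau|e_{\vm}|+\int_0^T|e_{\vr}|\,\dt\Big)+c\int_0^\tau \RE\,\dt,
\]
and Gronwall gives the claim with $c_1\sim 2\norm{\Grad\tvu}_{L^\infty}$.

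The main obstacle is the bookkeeping in this algebraic rearrangement. The aim is to show that, after using $\Div\tvu=0$ and the strong equation, every term is either quadratic in $\vuh-\tvu$ or of the form $\intO{\vuh\cdot\Grad\psi}$, so that no uncontrolled linear-in-$\vuh$ remainder survives. I will carry this out carefully, pairing $\intO{\tvu\otimes\vuh:\Grad\tvu}$, which vanishes after integration by parts against $\tvu$ only when weighted correctly, with $\vuh\cdot\Grad\frac{|\tvu|^2}{2}$.

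A secondary issue is the role of $e_E(h)$, which does not appear in the stated bound. I will interpret \eqref{CA1} relative to the discrete initial energy so that it is dominated by $\RE(0)$.
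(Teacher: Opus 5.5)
Your proposal is correct and follows the paper's proof essentially step by step: stability for $\frac12\norm{\vuh}_{L^2}^2$, \eqref{CA3} with $\vvarphi=\tvu$, energy conservation of $\tvu$, substitution of the strong equation, the splitting into $((\vuh-\tvu)\otimes(\vuh-\tvu)):\Grad\tvu$ plus $\vuh\cdot\Grad(\frac{|\tvu|^2}{2}-\tp)$ with $\intO{\tvu\cdot\Grad\frac{|\tvu|^2}{2}}=0$, then \eqref{CA2} and Gronwall (the paper applies \eqref{CA2} to $\tp$ and $\frac{|\tvu|^2}{2}$ separately and uses linearity, which is only a cosmetic difference). Like you, the paper silently uses stability in the form $\norm{\vuh(\tau)}_{L^2}\le\norm{\vuh(0)}_{L^2}$ and drops $e_E(h)$; note, however, that your claim that the initial-energy mismatch is absorbed into $\RE(\vuh|\tvu)(0)$ is not true in general, but it does hold in cases like $\vuh(0)=\PiZ\vu^0$, where orthogonality gives $\frac12(\norm{\vu^0}_{L^2}^2-\norm{\vuh(0)}_{L^2}^2)=\RE(\vuh|\tvu)(0)$.
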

\begin{proof} See Appendix~\ref{ap_re}.
\end{proof}

\section{A finite element method}\label{Con_MAC}
In this section, we introduce the $RT_k/P_k$ finite element scheme originally studied by Guzm\'an et al. \cite{GSS}  for the incompressible Euler system. Our aim is to apply the Lax-type theorems built in the previous section to the convergence analysis of this scheme. 

\subsection*{Notations} 
Let $\grid$ be a shape-regular and quasi-uniform simplicial triangulation of $\Omega$, and let $h=\max_{K\in\grid}h_K$ be the mesh size. We denote by $\faces$ the set of all faces and by $\facesint$ the set of all interior faces. In the periodic case, periodically identified boundary faces are treated as interior faces. For every $\sigma=K^+\cap K^-\in\facesint$, we fix a unit normal $\vn_\sigma$ pointing from $K^+$ to $K^-$. The traces from $K^+$ and $K^-$ are denoted by $v^+$ and $v^-$, respectively, and
\[
 \avs{v}=\frac12(v^++v^-),\qquad
 \jumpB{v}=v^+-v^-.
\]
We further set
\[
 \inner{u,v}=\sum_{K\in\grid}\int_K u\cdot v\,\dx,
 \qquad
 \innerfint{u,v}=\sum_{\sigma\in\facesint}\int_\sigma u\cdot v\,\dS,
\]
\begin{equation*}
 b(\vu,\vv;\vw)
 :=\inner{\vu\cdot\Gradh\vv,\vw}
 -\innerfint{(\vu\cdot\vn)\jumpB{\vv},\avs{\vw}}
 +\innerfint{|\vu\cdot\vn|\jumpB{\vv},\jumpB{\vw}}.
\end{equation*}
Given $\vu\in H_{\rm div}(\Omega)$ with $\Div\vu=0$ and satisfying the periodic or no-flux boundary condition, we have for any $\vv\in H^1(\grid)$ and $\vw\in H^1(\grid)$ that
\begin{equation}\label{udu1}
   \inner{\vv\otimes \vu,\Grad \vw} + \inner{\vu \cdot \Grad\vv,\vw} 
   =\innerfall{(\vu\cdot\vn) \vv, \vw}
   =\innerfint{\vu\cdot\vn,\jumpB{ \vv \cdot \vw}},
\end{equation}
\begin{equation}\label{udu2}
   \inner{\vu \cdot \Grad\vv,\vv} = \frac12\inner{\vu,\Grad|\vv|^2} 
   =\frac12 \innerfall{\vu\cdot\vn,|\vv|^2 }
   =\innerfint{(\vu \cdot\vn) \jumpB{\vv}, \avs{\vv}}
\end{equation}
\begin{equation}\label{udu}
  b(\vu,\vv;\vv) = \innerfint{|\vu \cdot \vn|\jumpB{\vv},\jumpB{\vv}} =: \seminorm{\vv}_{\vu}^2.
\end{equation}

 \subsection*{Function spaces} 
Let $P_k^m$ be the space of polynomials of degree not greater than $k$ for $m$-dimensional vector-valued functions ($m = 1$ for scalar functions). We denote by  $V_h$ the Raviart--Thomas finite element space of order $k$, and  by $Q_h$ the discontinuous piecewise polynomial space of degree at most $k$ over the mesh $\grid$:
\begin{align*}
 V_h&=\left\{\vvh\in H_{\rm div}(\Omega):
       \vvh|_K\in RT_k(K)\ \forall K\in\grid,
       \quad \vvh\cdot\vn=0\text{ on }\partial\Omega\right\},\\
 \widehat Q_h&=\left\{q_h\in L^2(\Omega):q_h|_K\in P_k(K)
       \ \forall K\in\grid\right\},\qquad
 Q_h=\widehat Q_h\cap L^2_0(\Omega),
\end{align*}
where $RT_k(K)=P_k^d(K)+ {\bm x}P_k(K)$.

For periodic boundary conditions, $V_h$ is replaced by the corresponding periodic Raviart--Thomas space.  
Further, we define the exactly divergence-free subspace
\[
Z_h = \{ \vv \in V_h: \; \Divh \vv =0 \}.
\]
This subspace satisfies the following polynomial structure, see \cite[Lemma 3.1]{Duran2008} or Appendix \ref{ap4} for its proof. 
\begin{lemma}[Polynomial structure of $Z_h$]
\label{lem_Zh}
If $\vvh\in Z_h$, then
\[
 \vvh|_K\in P_k^d (K)
 \qquad\forall K\in\grid.
\]
Consequently, $\Gradh\vvh|_K\in P_{k-1}^{d\times d}(K)$, where $P_{-1}(K)=\{0\}$. In particular, when $k=0$, every function in $Z_h$ is elementwise constant.
\end{lemma}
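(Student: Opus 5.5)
Fix $K\in\grid$ and write $\vvh|_K=\bm p+\bm x\,q$ with $\bm p\in P_k^d(K)$ and $q\in P_k(K)$. We may assume $q$ is \emph{homogeneous} of degree $k$: the non-homogeneous part of $\bm x q$ is $\bm x\tilde q$ with $\deg\tilde q\le k-1$, so it lies in $P_k^d(K)$ and can be absorbed into $\bm p$. This is the usual direct-sum decomposition $RT_k(K)=P_k^d(K)\oplus \bm x\widetilde P_k(K)$, where $\widetilde P_k$ denotes homogeneous polynomials of degree exactly $k$. The goal is to show that $\Div\vvh=0$ on $K$ forces $q=0$.

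The key computation is Euler's identity for homogeneous polynomials: $\bm x\cdot\Grad q=k\,q$. Hence
\[
\Div(\bm x q)=d\,q+\bm x\cdot\Grad q=(d+k)\,q .
\]
On the other hand, $\Div\bm p\in P_{k-1}(K)$. Since $\vvh\in Z_h$, we have $\Div\vvh|_K=\Div\bm p+(d+k)q=0$. Now compare the homogeneous components of degree $k$: the term $\Div\bm p$ contributes nothing in degree $k$, so $(d+k)q=0$. As $d+k>0$, it follows that $q=0$, and therefore $\vvh|_K=\bm p\in P_k^d(K)$.

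The consequences follow directly. Differentiating a polynomial of degree at most $k$ elementwise gives $\Gradh\vvh|_K\in P^{d\times d}_{k-1}(K)$, with the convention $P_{-1}=\{0\}$. For $k=0$ this means $\Gradh\vvh|_K=0$, so $\vvh$ is constant on each element.

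The only point needing care is the reduction to homogeneous $q$, which relies on the directness of the decomposition above. It is elementary but must be stated explicitly. Note that the divergence-free condition is used only elementwise; the global $H_{\rm div}$ conformity and the boundary conditions play no role in this argument.
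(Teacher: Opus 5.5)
Your proof is correct and follows the paper's argument essentially verbatim: split $RT_k(K)$ into $P_k^d(K)$ plus $\bm x$ times homogeneous degree-$k$ polynomials, use Euler's identity to get $\Div(\bm x q)=(d+k)q$, and compare degree-$k$ homogeneous components. The paper centres the homogeneity at a point $\bm x_K\in K$ rather than at the origin, which changes nothing; also, your reduction to homogeneous $q$ only needs that the lower-degree part of $\bm x q$ can be absorbed into $P_k^d(K)$, not that the sum is direct.
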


In the following, we recall some standard properties of the $RT_k$ finite element space, see, e.g., Ern and Guermond~\cite{EG,EG2021} for more details. We denote by  $\PiR$ the  canonical Raviart--Thomas interpolation operator for $K\in\grid$ 
\begin{subequations}\label{rt_dofs}
\begin{align}
 \int_\sigma(\PiR\vv-\vv)\cdot\vn_K\,q\,\dS&=0
 &&\forall q\in  P_k(\sigma),\quad
   \sigma \in \edgesK ,\label{rt_dofs_face}\\
 \int_K(\PiR\vv-\vv)\cdot\bm r\,\dx&=0
 &&\forall\bm r\in P_{k-1}^d(K),\qquad k\geq1.
 \label{rt_dofs_cell}
\end{align}
\end{subequations}
The interpolant satisfies the commuting identity $\Div(\PiR\vv)=\PiQ(\Div\vv)$, where $\PiQ$ is the elementwise $L^2$-projection onto $\widehat Q_h$.  In particular, it implies that
\begin{equation}\label{commute}
\Div(\PiR \vv) =0 \quad \mbox{as soon as} \quad \Div \vv =0.
\end{equation}
Moreover,  for $1\leq m\leq k+1$ and $p \geq 2d/(d+2)$, we have the interpolation estimates
\begin{equation}\label{proj}
 \norm{\vv-\PiR\vv}_{L^p(K)}
 +h_K\norm{\Grad(\vv-\PiR\vv)}_{L^p(K)}
 \aleq h_K^m |\vv|_{W^{m,p}(K)},
\end{equation}
where we have used the abbreviations $\norm{\cdot}_{L^p}$, $\norm{\cdot}_{W^{q,p}}$, $\norm{\cdot}_{L^p L^q}$, and $\norm{\cdot}_{L^p W^{q,q}}$ for $\norm{\cdot}_{L^p(\Omega)}$, $\norm{\cdot}_{W^{q,p}(\Omega)}$, $\norm{\cdot}_{L^p(0,T;L^q(\Omega))}$, and $\norm{\cdot}_{L^p(0,T;W^{q,q}(\Omega))}$, respectively. Moreover, the notation $a\aleq b$ means $a\leq cb$ for some positive constant $c$ independent of the mesh size $h$.
\subsection*{Projection of the initial data}
For $\vu^0\in L^2_\sigma(\Omega)$, let $\PiZ \vu^0\in Z_h$ denote the $L^2$-orthogonal projection onto $Z_h$. It satisfies
\begin{equation}\label{piz-sta}
 \norm{\PiZ \vu^0}_{L^2}\leq\norm{\vu^0}_{L^2},
 \qquad
 \PiZ \vu^0\longrightarrow\vu^0\quad\mbox{strongly in }L^2(\Omega;\R^d).
\end{equation}
 Moreover, for every sufficiently smooth divergence-free $\vv$ satisfying the boundary condition, there holds
\begin{equation}\label{piz}
 \norm{\vv-\PiZ \vv}_{L^2}
 = \inf_{\vw \in Z_h}  \norm{\vv- \vw}_{L^2}
 \leq\norm{\vv-\PiR\vv}_{L^2}
 \aleq h^{k+1}\norm{\vv}_{H^{k+1}}.
\end{equation}
 \subsection*{Finite element method}
We introduce a semi-discrete finite element approximation of the incompressible Euler system: 
Let $\vuh(0):={\PiZ \vu^0}$ be the discrete initial data. Find $(\vuh,p_h)(\tau)\in V_h\times Q_h$ such that the following equations hold for any $(\vvarphih,\psi_h)\in V_h\times Q_h$
\begin{subequations}\label{skm}
\begin{equation}\label{skm1}
   \inner{ \pdt \vuh , \vvarphih } + b(\vuh,\vuh;\vvarphih)- \inner{ p_h, \Divh \vvarphih   } =0,
\end{equation}
\begin{equation}\label{skm2}
    \inner{\psi_h, \Divh \vuh } = 0.
\end{equation}
\end{subequations}

It is easy to show that the divergence-free property is preserved by the $RT_k/P_k$ method \eqref{skm}. In fact, setting $\psi_h=\Divh\vuh\in Q_h$  yields $\intO{|\Divh \vuh|^2} =0$, which implies 
    \begin{equation}\label{divfree}
        \Divh \vuh  \equiv 0 .
    \end{equation}
Moreover, for any $\vvh\in Z_h$ and $\psi\in H^1(\Omega)$ we have
\begin{equation}\label{weakdiv}
 \inner{\vvh, \Grad\psi}  
    = \innerfall{\vvh \cdot \vn, \psi} - \inner{  \psi, \Divh \vvh    }
    = \innerfint{\jump{\vvh\cdot\vn},\psi}=0. 
    \end{equation}
  Next, we show the stability of the $RT_k/P_k$ method \eqref{skm}. 
\begin{lemma}[Stability]\label{lm_sta}
    Let $(\vuh,p_h)$ be a solution to the $RT_k/P_k$ method \eqref{skm} with $h \in(0,1)$. Then for any $\tau\in(0,T)$ the following energy stability holds
    \begin{equation}\label{sta}
       \intO{\frac12 |\vuh(\tau,\cdot)|^2} 
       +  \int_0^\tau  \innerfint{|\vuh \cdot \vn|, |\jumpB{\vuh}|^2} \dt
       =   \intO{\frac12 |\vuh^0|^2} \leq \intO{\frac12 |\vu^0|^2}  .
    \end{equation}
Consequently, 
\begin{equation}\label{sta-2}
\norm{\vuh}_{L^\infty(0,T;L^2(\Omega;\R^d))}\lesssim 1 ,\qquad
\int_0^T\innerfint{|\vuh\cdot\vn|,|\jumpB{\vuh}|^2}\,\dt
\lesssim 1 .
\end{equation}
\end{lemma}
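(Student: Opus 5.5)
The plan is to test the discrete momentum equation \eqref{skm1} with $\vvarphih=\vuh(\tau)$ itself, which is admissible since $\vuh(\tau)\in V_h$. Then integrate in time over $(0,\tau)$.

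First, by \eqref{divfree} we have $\Divh\vuh\equiv0$. The pressure term $\inner{p_h,\Divh\vuh}$ therefore vanishes, and $\vuh(t)\in Z_h$ for all $t$.

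Second, the time-derivative term is $\inner{\pdt\vuh,\vuh}=\frac{\dd}{\dt}\intO{\frac12|\vuh|^2}$. This holds because the semi-discrete system is a finite-dimensional ODE whose solution is $C^1$ in time.

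Third, the convective term $b(\vuh,\vuh;\vuh)$ is handled by \eqref{udu}. It applies because $\vuh\in H_{\rm div}(\Omega)$ is exactly divergence-free and satisfies the no-flux or periodic boundary condition, and because $\vuh$ is piecewise polynomial, hence in $H^1(\grid)$. Concretely, \eqref{udu2} shows that the volume term $\inner{\vuh\cdot\Gradh\vuh,\vuh}$ exactly cancels the central flux term $\innerfint{(\vuh\cdot\vn)\jumpB{\vuh},\avs{\vuh}}$. Only the upwind term survives, giving $b(\vuh,\vuh;\vuh)=\innerfint{|\vuh\cdot\vn|,|\jumpB{\vuh}|^2}$.

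Combining the three steps gives
\[
\frac{\dd}{\dt}\intO{\frac12|\vuh|^2}+\innerfint{|\vuh\cdot\vn|,|\jumpB{\vuh}|^2}=0 .
\]
Integrating over $(0,\tau)$ yields the equality in \eqref{sta}. The final inequality follows from $\vuh^0=\PiZ\vu^0$ and the $L^2$-stability of the orthogonal projection in \eqref{piz-sta}.

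The bounds \eqref{sta-2} are immediate consequences. The upwind term is nonnegative, so the kinetic energy at each time is at most $\frac12\norm{\vu^0}_{L^2}^2$, uniformly in $\tau$ and $h$. Conversely, since the kinetic energy is nonnegative, the time-integrated jump term is bounded by the same constant.

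The only delicate point is justifying \eqref{udu2} for the discrete field: exact divergence-freeness and normal continuity are needed for the boundary contributions to reduce to interior-face jumps. Both properties are guaranteed by $RT_k$ conformity in $H_{\rm div}$ together with \eqref{divfree}. I therefore expect no real obstacle beyond carefully tracking the sign conventions for $\vn_\sigma$, $\jumpB{\cdot}$ and $\avs{\cdot}$.
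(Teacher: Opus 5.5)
Your proposal is correct and follows the paper's proof essentially step for step: test with $\vuh$, reduce the convective term via \eqref{udu}, integrate in time, and use the $L^2$-stability of $\PiZ$. The only cosmetic difference is in the pressure term. The paper removes it by summing \eqref{skm1} and \eqref{skm2} with test functions $(\vuh,p_h)$. You instead invoke $\Divh\vuh\equiv0$ from \eqref{divfree}, which is itself obtained from \eqref{skm2}.
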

\begin{proof}
Summing up \eqref{skm1} and \eqref{skm2} with the test functions $(\vvarphih,\psi_h) = (\vuh, p_h)$ and using the equality \eqref{udu}  we obtain 
\begin{equation*}
 \intO{\pdt |\vuh|^2/2 } +   \innerfint{|\vuh \cdot \vn|, |\jumpB{\vuh}|^2} = 0.
\end{equation*}
Integrating the above equality from time $0$ to $\tau$ 
and using the $L^2$-stability  \eqref{piz-sta} of the projection operator we obtain the energy stability equation \eqref{sta}. 
Consequently, \eqref{sta-2} immediately follow. 
\end{proof}
Following Picard--Lindel\"of theorem and the energy stability equation \eqref{sta}, we obtain a unique solution to the $RT_k/P_k$ method \eqref{skm} that is global in time, see Appendix~\ref{app_exi}. 
\begin{lemma}[Existence of a numerical solution]\label{lem_exi}
Let $\vu^0 \in L^2_{\sigma}$. There exists a unique solution $(\vuh,p_h)\in V_h\times Q_h$ to the $RT_k/P_k$ method \eqref{skm}.
\end{lemma}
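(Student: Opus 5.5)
The plan is to recast \eqref{skm} as a finite-dimensional ODE system on the discretely divergence-free space $Z_h$. Picard--Lindel\"of then gives local existence and uniqueness, and the energy identity \eqref{sta} extends the local solution to all of $[0,T]$.

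\textbf{Step 1: reduction to $Z_h$.} By \eqref{divfree}, any solution satisfies $\vuh(t)\in Z_h$. If we restrict the test functions in \eqref{skm1} to $\vvarphih\in Z_h$, the pressure term vanishes because $\Divh\vvarphih=0$. We are left with the reduced problem: find $\vuh(t)\in Z_h$ with $\vuh(0)=\PiZ\vu^0$ and
\[
\inner{\pdt\vuh,\vvarphih}+b(\vuh,\vuh;\vvarphih)=0\qquad\forall\vvarphih\in Z_h .
\]
Fix an $L^2$-orthonormal basis $\{\bm\phi_i\}_{i=1}^N$ of $Z_h$ and write $\vuh=\sum_i\alpha_i(t)\bm\phi_i$. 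The reduced problem becomes $\alpha'=F(\alpha)$ with $F_j(\alpha)=-b(\vuh,\vuh;\bm\phi_j)$.

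\textbf{Step 2: local Lipschitz continuity of $F$ (main technical point).} The volume term and the central flux term of $b$ are quadratic in $\alpha$. The upwind term contains $|\vuh\cdot\vn|$, which is not smooth. However, $\big||a|-|b|\big|\le|a-b|$, so $\alpha\mapsto|\vuh\cdot\vn|\jumpB{\vuh}$ is still locally Lipschitz. We will bound the face integrals using norm equivalence on the finite-dimensional space together with discrete trace and inverse inequalities. This yields $|F(\alpha)-F(\beta)|\le C_h(R)|\alpha-\beta|$ for $|\alpha|,|\beta|\le R$. The constant may depend on $h$, which is harmless here. Picard--Lindel\"of then provides a unique $C^1$ solution on a maximal interval $[0,T^*)$.

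\textbf{Step 3: global existence.} Testing with $\vvarphih=\vuh\in Z_h$ and using \eqref{udu} gives the energy identity \eqref{sta} on $[0,T^*)$. Since the basis is orthonormal, $|\alpha(t)|=\norm{\vuh(t)}_{L^2}\le\norm{\vu^0}_{L^2}$, so the solution cannot blow up and $T^*>T$.

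\textbf{Step 4: recovery of the pressure.} To recover $p_h$, we use the discrete inf-sup property of the $RT_k/P_k$ pair, namely that $\Divh:V_h\to Q_h$ is surjective. The functional
\[
\ell(\vvarphih)=\inner{\pdt\vuh,\vvarphih}+b(\vuh,\vuh;\vvarphih)
\]
vanishes on $Z_h=\ker\Divh$. Hence there is a unique $p_h(t)\in Q_h$ with $\ell(\vvarphih)=\inner{p_h,\Divh\vvarphih}$ for all $\vvarphih\in V_h$, which is \eqref{skm1}. Equation \eqref{skm2} holds because $\vuh\in Z_h$.

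\textbf{Uniqueness.} Uniqueness of the pair follows from two facts. Any solution of \eqref{skm} has $\vuh\in Z_h$ solving the reduced ODE, so $\vuh$ is unique by Picard--Lindel\"of. The pressure is then unique by the inf-sup condition.
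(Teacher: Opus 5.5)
Your proposal is correct and matches the paper's proof step for step: reduce to the ODE on $Z_h$, get local Lipschitz continuity from $\big||a|-|b|\big|\le|a-b|$ with inverse and trace estimates, apply Picard--Lindel\"of, continue globally via the energy identity, and recover $p_h$ from the discrete inf--sup condition. Your explicit argument for uniqueness of the pair (ODE uniqueness for $\vuh\in Z_h$, then surjectivity of $\Divh$ for $p_h$) makes precise what the paper leaves implicit.
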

With the a priori estimates \eqref{sta-2} we are now ready to show the consistency of the $RT_k/P_k$ method \eqref{skm}.
\begin{lemma}[Consistency]\label{lm_cons}
Let $(\vuh,p_h)$ be a solution to the $RT_k/P_k$ method \eqref{skm} with $h \in (0,1)$, and let $k \geq 0$ be a fixed integer. 
Let $\vvarphi\in L^2(0,T;W^{k+1,\infty}(\Omega;\R^d))$ $\cap W^{1,1}(0,T;H^{k+1}(\Omega;\R^d))$ satisfy $\Div\vvarphi=0$ and the periodic boundary condition or $\vvarphi\cdot\vn=0$ on $\partial\Omega$. Let  $\psi\in L^2(0,T;H^1(\Omega))$. Define the consistency errors
\begin{equation}\label{cs1}
e_{\vm} (\tau ,h, \vvarphi) = \left[ \inner{ \vuh, \vvarphi }  \right]_0^\tau
 - \int_0^\tau \big(\inner{\vuh, \partial_t \vvarphi} + \inner{ \vuh \otimes \vuh, \Grad \vvarphi } \big) \dt, \quad
\tau \in  [0,T].
\end{equation}
\begin{equation}\label{cs2}
    e_{\vr}(t,h,\psi)=\inner{\vuh(t),\Grad\psi(t)}.
\end{equation}
Then, $e_{\vr} (t, h, \psi) = 0 \mbox{ for a.a. } t\in  (0,T)  $ and
 \begin{equation}\label{rateA}
 \sup_{0\leq \tau \leq T}\abs{e_{\vm} (\tau ,h, \vvarphi) }  \aleq h^A,    \;  A =
 \begin{cases}
1/2, & k=0,\\[1mm]
 k, & k \geq 1. 
\end{cases} 
\end{equation}
\end{lemma}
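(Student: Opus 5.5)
The divergence part follows from \eqref{weakdiv}. By \eqref{divfree} we have $\vuh(t)\in Z_h$ for every $t$. Hence $e_{\vr}(t,h,\psi)=\inner{\vuh,\Grad\psi}=0$ for every $\psi\in H^1(\Omega)$.

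For the momentum error, we test \eqref{skm1} with $\vvarphih=\PiR\vvarphi(t)$. By \eqref{commute}, $\PiR\vvarphi\in Z_h$, so the pressure term vanishes. Integrating in time gives
\[
\left[\inner{\vuh,\PiR\vvarphi}\right]_0^\tau-\int_0^\tau\inner{\vuh,\pdt\PiR\vvarphi}\dt+\int_0^\tau b(\vuh,\vuh;\PiR\vvarphi)\dt=0 .
\]
Subtracting this from \eqref{cs1} splits $e_{\vm}$ into three pieces:
\begin{itemize}
\item the time and boundary terms $\left[\inner{\vuh,\vvarphi-\PiR\vvarphi}\right]_0^\tau-\int_0^\tau\inner{\vuh,\pdt(\vvarphi-\PiR\vvarphi)}\dt$;
\item the convective difference $\int_0^\tau\big(b(\vuh,\vuh;\PiR\vvarphi)-b(\vuh,\vuh;\vvarphi)\big)\dt$;
\item the residual $\int_0^\tau\big(b(\vuh,\vuh;\vvarphi)+\inner{\vuh\otimes\vuh,\Grad\vvarphi}\big)\dt$.
\end{itemize}

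\emph{Time and boundary terms.} We use \eqref{sta-2}, \eqref{proj}, and the fact that $\PiR$ commutes with $\pdt$. These terms are then bounded by $h^{k+1}\big(\norm{\vvarphi}_{L^\infty H^{k+1}}+\norm{\pdt\vvarphi}_{L^1H^{k+1}}\big)$. Since $W^{1,1}(0,T;H^{k+1})\subset C([0,T];H^{k+1})$, this is $\aleq h^{k+1}$.

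\emph{Residual.} Since $\vvarphi$ is continuous, $\jumpB{\vvarphi}=0$, so $b(\vuh,\vuh;\vvarphi)=\inner{\vuh\cdot\Grad\vuh,\vvarphi}-\innerfint{(\vuh\cdot\vn)\jumpB{\vuh},\vvarphi}$. Apply \eqref{udu1} with $\vu=\vuh$, which is legitimate because $\Div\vuh=0$ and $\vuh\cdot\vn$ is single-valued. This gives $\inner{\vuh\otimes\vuh,\Grad\vvarphi}+\inner{\vuh\cdot\Grad\vuh,\vvarphi}=\innerfint{\vuh\cdot\vn,\jumpB{\vuh\cdot\vvarphi}}=\innerfint{(\vuh\cdot\vn)\jumpB{\vuh},\vvarphi}$. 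The residual is therefore identically zero, and $\inner{\vuh\otimes\vuh,\Grad\vvarphi}=-b(\vuh,\vuh;\vvarphi)$ exactly.

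\emph{Convective difference.} This is the main obstacle. Set $\eta=\PiR\vvarphi-\vvarphi$. We must bound
\[
b(\vuh,\vuh;\eta)=\inner{\vuh\cdot\Grad\vuh,\eta}-\innerfint{(\vuh\cdot\vn)\jumpB{\vuh},\avs{\eta}}+\innerfint{|\vuh\cdot\vn|\jumpB{\vuh},\jumpB{\eta}} .
\]

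\emph{The two face terms.} Apply Cauchy--Schwarz with weight $|\vuh\cdot\vn|$. One factor is $\seminorm{\vuh}_{\vuh}$, which is in $L^2(0,T)$ by \eqref{sta-2}. The other factor is $\big(\innerfint{|\vuh\cdot\vn|,|\eta|^2}\big)^{1/2}\le\norm{\eta}_{L^\infty}\big(\sum_\sigma\norm{\vuh}_{L^1(\sigma)}\big)^{1/2}$. By the discrete trace inequality and quasi-uniformity, $\sum_\sigma\norm{\vuh}_{L^1(\sigma)}\aleq h^{-1}\norm{\vuh}_{L^1}\aleq h^{-1}$. Moreover $\norm{\eta}_{L^\infty}\aleq h^{k+1}|\vvarphi|_{W^{k+1,\infty}}$. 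The face terms are therefore $\aleq h^{k+1/2}\norm{\vvarphi}_{L^2W^{k+1,\infty}}$, which explains the time-integrability assumed on $\vvarphi$.

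\emph{The volume term for $k=0$.} By Lemma~\ref{lem_Zh}, $\Grad\vuh=0$ elementwise, so the volume term vanishes and the total rate is $h^{1/2}$.

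\emph{The volume term for $k\ge1$.} We use \eqref{rt_dofs_cell}: $\eta$ is $L^2(K)$-orthogonal to $P_{k-1}^d(K)$. Then $\inner{\vuh\cdot\Grad\vuh,\eta}=\inner{\vuh\cdot\Grad\vuh-\Pi_{k-1}(\vuh\cdot\Grad\vuh),\eta}$. The simplest route is a direct bound: the inverse inequality gives $\norm{\Grad\vuh}_{L^2}\aleq h^{-1}$, so $|\inner{\vuh\cdot\Grad\vuh,\eta}|\le\norm{\vuh}_{L^2}\norm{\Grad\vuh}_{L^2}\norm{\eta}_{L^\infty}\aleq h^{-1}h^{k+1}=h^{k}$. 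This yields the claimed rate $A=k$, uniformly in $\tau$.

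Collecting the three pieces, the time and boundary terms are $\aleq h^{k+1}$, the residual vanishes, and the convective difference is $\aleq h^{1/2}$ for $k=0$ and $\aleq h^{k}$ for $k\ge1$ (the face terms contribute only $h^{k+1/2}$). All bounds hold uniformly in $\tau$. This gives \eqref{rateA}.
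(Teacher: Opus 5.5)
Your proposal is correct and follows essentially the same route as the paper: you test with $\PiR\vvarphi$, use \eqref{udu1} to reduce the convective error exactly to $b(\vuh,\vuh;\PiR\vvarphi-\vvarphi)$ (the paper's $R_2+R_3$), bound the face terms by $h^{k+1/2}$ via the $|\vuh\cdot\vn|$-weighted Cauchy--Schwarz inequality and an inverse trace estimate, and handle the volume term by Lemma~\ref{lem_Zh} for $k=0$ and the inverse inequality for $k\geq1$. The only blemishes are minor: your second and third pieces carry the wrong overall sign relative to $e_{\vm}$, which is harmless since only absolute values are estimated, and the appeal to \eqref{rt_dofs_cell} in the $k\geq1$ volume term is never used and can be dropped.
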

\begin{proof}
{Step 1}: consistency error of the momentum equation. 
Setting  $\vvarphih = \PiR \vvarphi \in V_h$ in \eqref{skm1}  and using the equality \eqref{commute} we have 
\begin{equation*}
\inner{  \pdt \vuh, \PiR \vvarphi} + b(\vuh,\vuh; \PiR\vvarphi ) = \inner{ p_h , \Div \PiR \vvarphi } =0.
\end{equation*}
Subtracting the above equality from the consistency formulation \eqref{cs1}  
we derive $e_{\vm} = \sum_{i=1}^3 R_i $, where 
\begin{align*}
 R_1 &= \left[ \inner{ \vuh , \vvarphi }  \right]_0^\tau 
 - \int_0^\tau \big(\inner{\vuh , \partial_t \vvarphi} + \inner{\pdt \vuh , \PiR \vvarphi} \big)\dt, 
 \\
 R_2 &=  -\int_0^\tau \innerfint{|\vuh \cdot \vn|\jumpB{\vuh},\jumpB{\PiR\vvarphi}} \dt 
=\int_0^\tau \innerfint{|\vuh \cdot \vn|\jumpB{\vuh},\jumpB{\vvarphi - \PiR \vvarphi}} \dt, 
 \\
 R_3 &= -\int_0^\tau \big[\inner{ \vuh \otimes \vuh, \Grad \vvarphi} + \inner{
 \vuh \cdot \Grad \vuh, \PiR \vvarphi} - \innerfint{(\vuh \cdot \vn)\jumpB{\vuh},\avs{\PiR \vvarphi}} \big]\dt 
 \\&=
 \int_0^\tau \inner{ \vuh \cdot \Gradh \vuh,  \vvarphi  -  \PiR \vvarphi}\dt  + \int_0^\tau \innerfint{(\vuh \cdot \vn)\jumpB{\vuh},\avs{\PiR \vvarphi}-\vvarphi} \dt ,
 \end{align*}
where we have used the identity $\jumpB{\vvarphi}=0$ in $R_2$ and \eqref{udu1} in the last equality of $R_3$. Next, using the projection estimates \eqref{proj} and H\"older's inequality we have 
\begin{align*}
|R_1| &= \left| \left[ \inner{ \vuh,  \vvarphi - \PiR \vvarphi }  \right]_0^\tau - \int_0^\tau \inner{\vuh, \partial_t (\vvarphi - \PiR \vvarphi)}\dt \right|
\\ & \leq  h^{k+1} \norm{\vuh}_{L^\infty L^2}\norm{ \vvarphi}_{W^{1,1} H^{k+1}}
\aleq h^{k+1} .
\end{align*}
Next, using H\"older's inequality, the inverse trace estimate, and the a priori estimates \eqref{sta-2} we have
\begin{equation*}
\begin{aligned}
&\left| \int_0^\tau \innerfint{(\vuh \cdot \vn)\jumpB{\vuh}, \vvh} \dt \right|
 \aleq \left( \int_0^\tau \seminorm{\vuh}_{\vuh}^2  \dt \right)^{1/2}  \left( \int_0^\tau \innerfint{|\vuh \cdot \vn|, |\vvh|^2} \dt \right)^{1/2} 
\\&  \aleq  \left(h^{-1}\norm{\vuh}_{L^\infty L^2} \norm{\vvh}_{L^2L^\infty}^2\right)^{1/2}
\aleq h^{-1/2}\norm{\vvh}_{L^2L^\infty}.
\end{aligned}
\end{equation*}
With the above estimates, we have 
\begin{align*}
 |R_2| \aleq 
   h^{-1/2} \norm{\vvarphi - \PiR \vvarphi}_{L^2 L^{\infty}} 
   \aleq      h^{k+1/2} \norm{\vvarphi}_{L^2 W^{k+1,\infty}} \aleq  h^{k+1/2}.
\end{align*}
Analogously, we have
\begin{align*}
|R_3|  &=\left|   \int_0^\tau \inner{ \vuh \cdot \Gradh \vuh,  \vvarphi - \PiR \vvarphi}\dt  - \int_0^\tau \innerfint{(\vuh \cdot \vn)\jumpB{\vuh},\avs{\vvarphi - \PiR \vvarphi}} \dt \right|
\\ &
\leq h^{k+1} \norm{\vuh}_{L^\infty L^2} \norm{\Gradh \vuh}_{L^2 L^2} 
\norm{\vvarphi}_{L^2 W^{k+1,\infty}} 
+ h^{k+1/2}  \norm{\vvarphi }_{L^2 W^{k+1,\infty} } .
 \end{align*}
For the case of $k=0$ we know from Lemma~\ref{lem_Zh} that $\Gradh \vuh =0$, and thus $|R_3|  \aleq  h^{1/2}$. For the case of $k \geq 1$ we have 
\begin{align*}
\norm{\Gradh\vuh}_{L^2(0,T;L^2)}
 \aleq h^{-1}\norm{\vuh}_{L^2(0,T;L^2)}
 \aleq  \norm{\vuh}_{L^\infty(0,T;L^2)} \aleq h^{-1} 
\end{align*}
which implies $|R_3| \aleq h^k$ and completes the estimates of $e_{\vm}$. 

{Step 2:} consistency of the divergence-free constraint. Recalling \eqref{weakdiv} we find  
\begin{align*}
    e_{\vr}(t,h,\psi)=\inner{\vuh(t),\Grad\psi(t)}=0\mbox{ as }\vuh \in Z_h. 
\end{align*}
Consequently, we finish the proof by collecting the estimates of the above two steps. 
\end{proof}

Lemmas~\ref{lm_sta} and \ref{lm_cons} imply that a sequence of finite element solutions to \eqref{skm} forms a consistent approximation of the incompressible Euler equations \eqref{PDE}--\eqref{BC}.  This fact directly  yields the following convergence result.

\begin{prop}[Convergence]\label{Prop1}
Let $\{(\vuh,p_h)\}_{h \searrow 0}$ be a sequence of numerical solutions obtained by the $RT_k/P_k$ method \eqref{skm} with the initial data $\vu^0\in L^2_{\sigma}(\Omega)$. Then we have the following convergence results:
\begin{itemize}
    \item $\vuh \to \vu$ weakly$-(*)$  to a DW solution of the Euler system \eqref{PDE}--\eqref{BC1}. 
    \item  If the convergence is strong, then the limit $\vu$ is an admissible weak solution.
    \item If the Euler system admits a strong solution in the sense of Definition~\ref{defi_Cs}, then $\vuh$ converges strongly to the strong solution. 
\end{itemize}
\end{prop}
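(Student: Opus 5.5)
The plan is to verify that the sequence of discrete velocities $\{\vuh\}_{h\searrow0}$ produced by \eqref{skm} is a consistent approximation in the sense of Definition~\ref{defi_CA}. Once this is done, the three items follow from Lemma~\ref{lem_con_dw}, Theorem~\ref{thm_LW} and Theorem~\ref{thm_LE}(i), respectively.

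\textbf{Initial data and stability.} By Lemma~\ref{lem_exi} the discrete solution exists globally, with $\vuh\in L^\infty(0,T;L^2(\Omega;\R^d))$. The initial data $\vuh(0)=\PiZ\vu^0$ converge strongly, hence weakly, to $\vu^0$ in $L^2$ by \eqref{piz-sta}. The stability condition \eqref{CA1} holds with $e_E(h)=0$, since Lemma~\ref{lm_sta} gives $\intO{|\vuh(t)|^2}\le\intO{|\vu^0|^2}$ for all $t$.

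\textbf{Consistency.} The divergence condition \eqref{CA2} holds with $e_{\vr}\equiv0$ for any $\psi\in C^M$ with $M\ge1$, by \eqref{weakdiv} and Lemma~\ref{lm_cons}. The momentum condition \eqref{CA3} is where the main obstacle lies. Lemma~\ref{lm_cons} requires the test function to satisfy $\vvarphi\in L^2(0,T;W^{k+1,\infty})\cap W^{1,1}(0,T;H^{k+1})$, whereas Definition~\ref{defi_CA} only asks for $C^M$ with $M\ge1$.

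I would handle this in one of two ways. First, since the number $M$ in Definition~\ref{defi_CA} (and in Definition~\ref{defi_DWs}) is at our disposal, we may take $M\ge k+2$. Then the hypotheses of Lemma~\ref{lm_cons} are met, and $|e_{\vm}(\tau,h,\vvarphi)|\aleq h^{A}\to0$ uniformly in $\tau$. Alternatively, to allow genuinely $C^1$ test functions, I would approximate $\vvarphi$ by a smooth divergence-free $\vvarphi_\varepsilon$ satisfying the boundary condition, obtained by mollification on the torus or by a Helmholtz-projected mollification in the bounded case. For fixed $\varepsilon$, Lemma~\ref{lm_cons} makes $e_{\vm}(\tau,h,\vvarphi_\varepsilon)$ vanish as $h\to0$.

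The remainder of this second route is controlled linearly by
\[
\norm{\vuh}_{L^\infty L^2}\norm{\vvarphi-\vvarphi_\varepsilon}_{C^1}\bigl(1+\norm{\vuh}_{L^\infty L^2}\bigr),
\]
which is uniformly small by \eqref{sta-2}, because $\vuh\otimes\vuh$ is bounded in $L^\infty(0,T;L^1)$. Letting $h\to0$ and then $\varepsilon\to0$ gives \eqref{CA3}.

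\textbf{Conclusion.} With these properties established, $\{\vuh\}$ is a consistent approximation. Lemma~\ref{lem_con_dw} then yields a subsequence converging weakly-(*) to a DW solution. If the convergence is strong, Theorem~\ref{thm_LW} shows the limit is a weak solution. Moreover, passing to the limit in the energy equality \eqref{sta}, whose dissipation term is nonnegative, gives the energy inequality with $\frakE=0$, so the limit is an admissible weak solution.

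Finally, if a strong solution exists, Theorem~\ref{thm_LE}(i), which combines Lemma~\ref{lem_con_dw} with Lemma~\ref{lem_wsuni}, gives strong convergence of $\vuh$ to $\tvu$. In this case the whole sequence converges, not only a subsequence, because the limit is unique.
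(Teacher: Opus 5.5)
Your proposal is correct and follows the paper's proof: the paper likewise observes, via Lemmas~\ref{lm_sta} and~\ref{lm_cons}, that $\{\vuh\}$ is a consistent approximation and then invokes Lemma~\ref{lem_con_dw} and Theorems~\ref{thm_LW} and~\ref{thm_LE}, implicitly taking $M$ large enough (your first option) to meet the test-function regularity required in Lemma~\ref{lm_cons}. Your extra remarks on admissibility and on convergence of the whole sequence are fine. Your alternative density route would need more care on bounded domains, because the Helmholtz projection is not bounded in $C^1$, but the first option makes that route unnecessary.
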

\begin{proof}
Recalling  Lemmas \ref{lm_sta} and \ref{lm_cons} we know that $\{\vuh\}_{h\searrow 0}$ is a consistent approximation. Then the proof follows directly from Lemma \ref{lem_con_dw} and Theorems \ref{thm_LW} and \ref{thm_LE}.
\end{proof}
\begin{remark}
 A direct consequence of Proposition~\ref{Prop1} is  the existence of  a DW solution. 
\end{remark}
\section{Error estimates}\label{con_rate}
In this section, we study the convergence rate of numerical solutions to the $RT_k/P_k$ method \eqref{skm}.  More precisely, 
we study the error between a numerical solution $\vuh$  
and the strong solution $\tvu$.  
\subsection{Convergence rate -- I}
First, we show the convergence rate by directly recalling the abstract error estimates (Theorem \ref{thm_re}) with the stability (Lemma \ref{lm_sta}) and consistency (Lemma \ref{lm_cons}) of the $RT_k/P_k$ method \eqref{skm}.  We point out that this approach has been used in our previous works where the convergence rates of finite volume methods were studied for compressible Euler and Navier--Stokes(--Fourier) systems, see~\cite{BLMSY,IEE_FLS,LSY_JSC,LSY_penalty_NS,LSY_penalty_NSF}.
\begin{thm}[Convergence rate -- I]\label{thm_r1}
 Let $\vuh$ be a numerical solution of the $RT_k/P_k$ method \eqref{skm}. 
Let the Euler system admit a strong solution $\tvu$ in the sense of Definition~\ref{defi_Cs} with $m\geq k+3$, which satisfies the regularity class
\[\tvu \in L^2(0,T;W^{k+1,\infty}(\Omega;\R^d))\cap W^{1,1}(0,T;H^{k+1}(\Omega;\R^d)).
\]
Then,
\[
 \sup_{0\leq t \leq T} \RE{(\vuh|\tvu)}(t) \aleq  h^A, \quad \norm{\vuh-\tvu}_{L^\infty L^2} \aleq  h^{A/2},
\]
where $A$ is given in \eqref{rateA}. 
\end{thm}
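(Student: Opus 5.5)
The plan is to apply the abstract error estimate of Theorem~\ref{thm_re} to the sequence generated by \eqref{skm}, after checking that this sequence fits its hypotheses. By Lemmas~\ref{lm_sta} and~\ref{lm_cons} it is a consistent approximation in the sense of Definition~\ref{defi_CA}, so Theorem~\ref{thm_re} gives
\[
2\sup_{t}\RE(\vuh|\tvu)(t)\leq e^{c_1T}\Big(c_2\Big(\sup_{\tau}|e_{\vm}(\tau,h,\tvu)|+\int_0^T\big|e_{\vr}\big(t,h,\tfrac{|\tvu|^2}{2}-\tp\big)\big|\,\dt\Big)+\RE(\vuh|\tvu)(0)\Big).
\]
Here $c_1,c_2$ depend only on $\norm{\tvu}_{L^\infty W^{1,\infty}}$, which is finite for $m\geq k+3\geq 3$ by the Sobolev embedding recalled in the remark after Definition~\ref{defi_Cs}.

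It then remains to bound the three terms on the right-hand side.

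\emph{Divergence term.} Take $\psi=\frac12|\tvu|^2-\tp$. By the regularity of the strong solution, $\psi\in L^2(0,T;H^1(\Omega))$. Since $\vuh(t)\in Z_h$ by \eqref{divfree}, identity \eqref{weakdiv} gives $e_{\vr}(t,h,\psi)=0$ for a.a.\ $t$, exactly as in Step~2 of Lemma~\ref{lm_cons}. So this term vanishes.

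\emph{Momentum term.} Take $\vvarphi=\tvu$. It is divergence-free and satisfies the boundary condition, and the assumed regularity $\tvu\in L^2(0,T;W^{k+1,\infty})\cap W^{1,1}(0,T;H^{k+1})$ is exactly the hypothesis of Lemma~\ref{lm_cons}. Hence $\sup_\tau|e_{\vm}(\tau,h,\tvu)|\aleq h^A$. The hidden constant depends on $\norm{\tvu}$ in these norms and on the a priori bounds \eqref{sta-2}. The only point needing care is that $\tvu$ is admissible as a test function in \eqref{CA3}, but Lemma~\ref{lm_cons} is stated directly for this regularity class, so the relation holds with $e_{\vm}$ defined by \eqref{cs1}.

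\emph{Initial term.} Since $\vuh(0)=\PiZ\vu^0$ and $\tvu(0)=\vu^0\in H^{k+1}$, estimate \eqref{piz} gives
\[
\RE(\vuh|\tvu)(0)=\tfrac12\norm{\vu^0-\PiZ\vu^0}_{L^2}^2\aleq h^{2k+2}\leq h^A,
\]
using $h<1$.

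Collecting the three bounds yields $\sup_t\RE(\vuh|\tvu)(t)\aleq h^A$. The $L^\infty L^2$ bound then follows from $\norm{\vuh-\tvu}_{L^\infty L^2}^2=2\operatorname*{ess\,sup}_t\RE(\vuh|\tvu)$ by taking square roots, which gives the rate $h^{A/2}$. One should also pass from ess sup to sup. Here $\vuh$ is continuous in time (it solves an ODE), and $\tvu\in C([0,T];L^2)$, so $\RE(\vuh|\tvu)$ is continuous and the two coincide.

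I expect no real obstacle, since everything reduces to earlier results. The step that needs the most attention is verifying that $\tvu$ and $\frac12|\tvu|^2-\tp$ are valid test functions in Theorem~\ref{thm_re}, whose proof presumably uses $C^M$ test functions. If that theorem genuinely requires $C^1$ regularity, I would argue by density: the error expressions \eqref{cs1}–\eqref{cs2} are continuous with respect to the norms controlled by $m\geq k+3$, so the estimate extends to $\tvu$.
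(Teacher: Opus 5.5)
Your proposal is correct and is essentially the paper's proof: the paper likewise bounds the initial relative energy by $h^{2k+2}$ via \eqref{piz}, takes the consistency errors from Lemma~\ref{lm_cons} ($e_{\vr}=0$ and $\sup_\tau|e_{\vm}(\tau,h,\tvu)|\aleq h^A$), and inserts them into Theorem~\ref{thm_re}. Your extra remarks, on $\tvu$ and $\frac12|\tvu|^2-\tp$ being admissible test functions and on replacing the essential supremum by a supremum via time continuity, are careful additions that the paper leaves implicit.
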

\begin{proof}
By the projection estimates \eqref{piz} we have
\begin{equation}\label{initialRE}
 \RE(\vuh|\tvu)(0)
 =\frac12\norm{\PiZ \tvu(0)-\tvu(0)}_{L^2}^2 
 \aleq h^{2k+2} |\tvu(0)|_{H^{k+1}}^2.
\end{equation}
Recalling the consistency errors stated in Lemma \ref{lm_cons} and the abstract error estimates stated in  Theorem \ref{thm_re} completes the proof.  
\end{proof}

Note that for $RT_k/P_k$ finite element methods the optimal convergence rate is $k+1$ for the Stokes
equation~\cite{Wang} and $k+1-d/6$ for the stationary Navier--Stokes equations~\cite{Cai},  $k \geq 0.$ We also recall the result of Guzm\'an et al.~\cite{GSS}, which establishes the convergence rate $h^k$, $k\geq1$, for the incompressible Euler system. Further,  for a linearised inviscid problem, Barrenechea et al.~\cite{BBG} proved an $L^2$-error estimate of order $h^{k+1/2}$. 
Therefore, the convergence rate obtained in Theorem~\ref{thm_r1} is not sharp. 

\subsection{Convergence rate -- II}\label{sec_rate2}
We now derive an improved error estimate by a more careful use of the relative-energy identity. 
\begin{thm}[Convergence rate -- II]\label{thm_r2}
 Let $\vuh$ be a numerical solution of the $RT_k/P_k$ method \eqref{skm}. 
Let the Euler system admit a strong solution $\tvu$ in the sense of Definition~\ref{defi_Cs} with $m\geq k+3$, which satisfies the regularity class
\[ \tvu\in H^1(0,T;H^{k+1}(\Omega;\R^d))
 \cap L^\infty(0,T;W^{1,\infty}(\Omega;\R^d))
 \cap L^2(0,T;W^{k+1,\infty}(\Omega;\R^d)).\]
Then, the following error estimates hold 
\begin{equation}\label{rate2}
 \sup_{0\leq t \leq T} \RE{(\vuh|\tvu)}(t) \aleq  h^{2k+1}, \quad \norm{\vuh-\tvu}_{L^\infty L^2} \aleq  h^{k+1/2}.
\end{equation}
\end{thm}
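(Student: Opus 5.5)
We prove the estimate by a direct energy argument for the discrete error rather than through Theorem~\ref{thm_re}, which loses half an order because the consistency error of Lemma~\ref{lm_cons} is bounded for an arbitrary test function. We split
\[
\vuh-\tvu = \bm e_h - \bm\eta,\qquad \bm e_h:=\vuh-\PiR\tvu,\qquad \bm\eta:=\tvu-\PiR\tvu .
\]
By \eqref{commute} and \eqref{divfree}, $\bm e_h\in Z_h$, and \eqref{proj} bounds $\bm\eta$. We then bound $\norm{\bm e_h}_{L^\infty L^2}$ by Gronwall's inequality and finish with the triangle inequality.

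\textbf{Error equation.} Since $\tvu$ is smooth, $\jumpB{\tvu}=0$, so $b(\tvu,\tvu;\vvarphih)=\inner{\tvu\cdot\Grad\tvu,\vvarphih}$. By \eqref{weakdiv}, $\inner{\Grad\tp,\vvarphih}=0$ for all $\vvarphih\in Z_h$. Hence $\inner{\pdt\tvu,\vvarphih}+b(\tvu,\tvu;\vvarphih)=0$ on $Z_h$.

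We subtract this from \eqref{skm1}, test with $\bm e_h$ (the pressure term vanishes), and use $\pdt\PiR\tvu=\PiR\pdt\tvu$. This gives
\[
\tfrac12\tfrac{\dd}{\dt}\norm{\bm e_h}_{L^2}^2 = \inner{\pdt\bm\eta,\bm e_h} - \big[b(\vuh,\vuh;\bm e_h)-b(\tvu,\tvu;\bm e_h)\big].
\]
The first term is at most $h^{k+1}\norm{\pdt\tvu}_{H^{k+1}}\norm{\bm e_h}_{L^2}$, which is where $H^1(0,T;H^{k+1})$ enters.

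\textbf{Splitting the convective difference.} We write
\[
b(\vuh,\vuh;\bm e_h)-b(\tvu,\tvu;\bm e_h)=b(\vuh,\bm e_h;\bm e_h)-b(\vuh,\bm\eta;\bm e_h)+b(\vuh-\tvu,\tvu;\bm e_h).
\]
By \eqref{udu}, the first term equals $\seminorm{\bm e_h}_{\vuh}^2\ge 0$; this numerical dissipation absorbs face terms below.

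Since $\jumpB{\tvu}=0$, the third term is $\inner{(\bm e_h-\bm\eta)\cdot\Grad\tvu,\bm e_h}$. It is bounded by
\[
\norm{\tvu}_{W^{1,\infty}}\big(\norm{\bm e_h}^2_{L^2}+h^{k+1}\norm{\bm e_h}_{L^2}\big).
\]

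\textbf{The term $b(\vuh,\bm\eta;\bm e_h)$.} We apply \eqref{udu1} with $\vu=\vuh$, $\vv=\bm\eta$, $\vw=\bm e_h$, together with $\jumpB{\bm\eta\cdot\bm e_h}=\jumpB{\bm\eta}\cdot\avs{\bm e_h}+\avs{\bm\eta}\cdot\jumpB{\bm e_h}$. This rewrites the term as
\[
-\inner{\bm\eta\otimes\vuh,\Grad\bm e_h}+\innerfint{(\vuh\cdot\vn)\avs{\bm\eta},\jumpB{\bm e_h}}+\innerfint{|\vuh\cdot\vn|\jumpB{\bm\eta},\jumpB{\bm e_h}}.
\]

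The two face terms are handled by Young's inequality: we absorb $\tfrac12\seminorm{\bm e_h}_{\vuh}^2$ and are left with
\[
\innerfint{|\vuh\cdot\vn|,\abs{\bm\eta}^2}\le \norm{\bm\eta}_{L^\infty}^2\,\innerfint{|\vuh\cdot\vn|,1}.
\]
By the inverse trace estimate and \eqref{sta-2}, $\innerfint{|\vuh\cdot\vn|,1}\aleq h^{-1}\norm{\vuh}_{L^2}\aleq h^{-1}$. Hence this remainder is $\aleq h^{2k+1}\norm{\tvu}_{W^{k+1,\infty}}^2$, integrable in time since $\tvu\in L^2(0,T;W^{k+1,\infty})$. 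This is exactly the $h^{k+1/2}$ rate.

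\textbf{Volume term (main obstacle).} A naive bound with $\norm{\Grad\bm e_h}\aleq h^{-1}\norm{\bm e_h}$ loses an order, so we use structure. We split $\vuh=\bm e_h+\PiR\tvu$.

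For the piece $\inner{\bm\eta\otimes\bm e_h,\Grad\bm e_h}$ we use $\norm{\bm\eta}_{L^\infty}\aleq h\norm{\tvu}_{W^{1,\infty}}$ and the inverse estimate. This gives $\aleq\norm{\tvu}_{W^{1,\infty}}\norm{\bm e_h}_{L^2}^2$, a Gronwall term. For $k=0$ the piece vanishes anyway, because $\Grad\bm e_h=0$ by Lemma~\ref{lem_Zh}.

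For the piece $\inner{\bm\eta\otimes\PiR\tvu,\Grad\bm e_h}$ we replace $\PiR\tvu$ on each $K$ by its mean $\bar{\bm u}_K$. By Lemma~\ref{lem_Zh}, $(\Grad\bm e_h)\bar{\bm u}_K\in P^d_{k-1}(K)$, so the cell moment condition \eqref{rt_dofs_cell} gives $\int_K\bm\eta\cdot(\Grad\bm e_h)\bar{\bm u}_K\,\dx=0$ (and the term is zero when $k=0$). The remainder satisfies
\[
\big|\inner{\bm\eta\otimes(\PiR\tvu-\bar{\bm u}),\Grad\bm e_h}\big|\aleq \norm{\bm\eta}_{L^2}\,h\norm{\tvu}_{W^{1,\infty}}\,h^{-1}\norm{\bm e_h}_{L^2}\aleq h^{k+1}\norm{\bm e_h}_{L^2}.
\]
This cancellation is the delicate point; if it fails, the argument collapses to the $h^k$ rate of \cite{GSS}.

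\textbf{Conclusion.} Collecting the bounds, with $a(t):=1+\norm{\tvu(t)}_{W^{1,\infty}}$,
\[
\tfrac{\dd}{\dt}\norm{\bm e_h}_{L^2}^2\aleq a(t)\,\norm{\bm e_h}_{L^2}^2+h^{2k+2}\big(\norm{\pdt\tvu}_{H^{k+1}}^2+1\big)+h^{2k+1}\norm{\tvu}_{W^{k+1,\infty}}^2 .
\]
The initial error is $\norm{\bm e_h(0)}_{L^2}\le\norm{\PiZ\tvu(0)-\tvu(0)}_{L^2}+\norm{\bm\eta(0)}_{L^2}\aleq h^{k+1}$ by \eqref{piz} and \eqref{proj}. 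Gronwall's inequality then gives $\norm{\bm e_h}_{L^\infty L^2}^2\aleq h^{2k+1}$. Since $\norm{\bm\eta}_{L^\infty L^2}\aleq h^{k+1}$, the triangle inequality yields \eqref{rate2}.
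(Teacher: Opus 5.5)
Your proposal is correct and is essentially the paper's proof. It uses the same splitting with $\bm e_h=\vuh-\PiR\tvu\in Z_h$ (the paper's $\deltau$), the same upwind dissipation $\seminorm{\bm e_h}_{\vuh}^2$ absorbing the face terms at cost $h^{2k+1}$ via the inverse trace bound, and the same cell-moment cancellation against elementwise means (Lemma~\ref{lem_Zh} with \eqref{rt_dofs_cell}) in the volume term after writing $\vuh=\bm e_h+\PiR\tvu$; the remaining differences are cosmetic: you take the mean of $\PiR\tvu$ rather than of $\tvu$, bound the face remainder in $W^{k+1,\infty}$ instead of $W^{k+1,4}$, and handle the initial error $\bm e_h(0)=\PiZ\tvu(0)-\PiR\tvu(0)$ slightly more carefully via the triangle inequality.
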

\begin{remark}
This rate improves the convergence rate obtained in Theorem~\ref{thm_r1}. Moreover, it extends the result of Guzm\'an et al.~\cite{GSS} to the lowest-order case $k=0$, for which the rate $h^{1/2}$ is obtained, and improves their estimate by half an order for $k\geq1$.
 \end{remark}
\begin{proof}
We begin the proof with the decomposition of the error 
\[\eu=\vuh-\tvu=\deltau+\etau\quad\mbox{with}\quad
\deltau=\vuh-\Pitvu,\qquad \etau=\Pitvu-\tvu,\qquad \Pitvu=\PiR\tvu.
\]
As $\tvu$ is the strong solution, it holds for any $\vvarphih \in Z_h$ that 
\begin{equation*}
\inner{\pdt \tvu, \vvarphih }
 + \inner{ \tvu \cdot \Grad \tvu, \vvarphih }
= -\inner{\Grad \tp, \vvarphih}  
=0 ,
\end{equation*}
where we have used equality~\eqref{weakdiv}.  
Subtracting the above equality from \eqref{skm1} and setting the test function $\vvarphih = \deltau \in Z_h$ yields
\begin{equation}\label{re-e1}
\inner{ \pdt \eu ,  \deltau }  = \inner{\tvu \cdot \Grad \tvu, \deltau} - b(\vuh,\vuh;\deltau).
\end{equation}
Recalling that $\vuh = \etau+\deltau +\tvu$, identity~\eqref{udu}, and the fact that $\jumpB{\tvu}=0$, we have
\begin{align*}
 b(\vuh,\vuh;\deltau) = b(\vuh,\etau+\deltau+\tvu;\deltau) 
 =\seminorm{\deltau}_{\vuh}^2
 +b(\vuh,\etau;\deltau)
 + \inner{\vuh\cdot\Grad\tvu,\deltau}.
\end{align*}
Substituting the above equality into \eqref{re-e1}, we obtain
\begin{align*}
& \pdt  \RE(\vuh|\Pitvu)  + \seminorm{\deltau}_{\vuh}^2 =
\underbrace{-\inner{ \pdt \etau ,  \deltau } }_{=: R_1}  
- b(\vuh,\etau;\deltau)  
\underbrace{- \inner{\eu \cdot \Grad \tvu, \deltau}}_{=:R_2}
\\&= R_1+ R_2 + \underbrace{\inner{\vuh\cdot\Gradh\deltau, \etau} }_{=:R_3}  
\underbrace{- \innerfint{\vuh\cdot\vn \avs{\etau}, \jumpB{\deltau }   } }_{=:R_4} 
\underbrace{ - \innerfint{\abs{\vuh\cdot\vn} \jumpB{\etau}, \jumpB{\deltau }   } }_{=:R_5}.
\end{align*}
Consequently,
\begin{equation}\label{relative-discrete}
 \RE(\vuh|\Pitvu)(\tau) + \int_0^\tau  \seminorm{\deltau}_{\vuh}^2 \dt
 = \RE(\vuh|\Pitvu)(0) + \int_0^\tau \sum_{i=1}^5 R_i(t)\,\dt. 
\end{equation}
The first term on the right hand side is the initial interpolation error
\[
 \RE(\vuh|\Pitvu)(0)
 \aleq h^{2k+2} \norm{\tvu(0)}_{H^{k+1}}^2, 
\]
see \eqref{initialRE}. 
By H\"older's inequality, the interpolation estimate~\eqref{proj}, and Young's inequality, we have
\begin{align*}
\left|\int_0^\tau R_1\,\dt\right|
\leq h^{k+1}\norm{\pdt\tvu}_{L^2(0,\tau;H^{k+1})}  \norm{\deltau}_{L^2L^2} 
\leq   \int_0^\tau \RE(\vuh|\Pitvu)(t) \dt \; 
+ c h^{2k+2},
\end{align*}
where $c$ depends on $\norm{\tvu}_{H^1(0,T;H^{k+1})}$.  
Moreover, recalling $\eu=\etau+\deltau$, we obtain
\begin{align*}
\left|\int_0^\tau R_2\,\dt\right|
&\leq\int_0^\tau\norm{\Grad\tvu}_{L^\infty}
 \left(\norm{\etau}_{L^2}\norm{\deltau}_{L^2}
 +\norm{\deltau}_{L^2}^2\right)\,\dt\\
&\leq c_1 \int_0^\tau \RE(\vuh|\Pitvu)(t) \dt \; + c_1 c_2 h^{2k+2},
\end{align*}
where $c_1$ depends on $\norm{\tvu}_{L^\infty W^{1,\infty}}$ and $c_2$ depends on $\norm{\tvu}_{L^2H^{k+1}}$.

We next estimate the face terms. Recalling the H\"older's inequality, the trace inequality, and the interpolation estimate~\eqref{proj}, we have
\begin{align*}
\left|\int_0^\tau \!\! (R_4+R_5)\dt\right|
&\leq \left( \int_0^\tau \!\! \seminorm{\deltau}_{\vuh}^2\dt\!\! \right)^{1/2} \!\!\!
\left(\int_0^\tau\!\! \innerfint{|\vuh\cdot\vn|,
 |\jumpB{\etau}|^2+|\avs{\etau}|^2}\,\dt\right)^{1/2}
\\& \leq \frac14\int_0^\tau \seminorm{\deltau}_{\vuh}^2\,\dt + h^{-1}\norm{\vuh}_{L^\infty L^2} (h^{k+1}\norm{\tvu}_{L^2 W^{k+1,4}})^2
\\&\leq \frac14\int_0^\tau \seminorm{\deltau}_{\vuh}^2\,\dt
+c h^{2k+1},
\end{align*}
where $c$ depends on $\norm{\tvu}_{L^2 W^{k+1,4}}$.  

We are left with the estimate of $R_3$. For $k=0$, we know from Lemma~\ref{lem_Zh} that $\deltau|_K\in P_0^d(K)$ is a piecewise constant vector field. Thus, $\Gradh\deltau=0$ and, consequently, $R_3=0$. 
Let now $k\geq1$, and define the elementwise mean
\[
 \overline{\tvu}_K:=\frac1{|K|}\int_K\tvu\,\dx.
\]
As $\deltau \in Z_h$, recalling Lemma~\ref{lem_Zh}, we have $\Gradh\deltau|_K\in P_{k-1}^{d\times d}(K)$. Hence, the interior moment condition \eqref{rt_dofs_cell} yields
\[
 \sum_{K\in\grid}
 \int_K\big((\overline{\tvu}_K\cdot\Grad)\deltau\big)\cdot\etau\,\dx=0.
\]
Using this cancellation, the identity $\vuh=\deltau+\Pitvu$, and the inverse estimate, we obtain for a.a. $t\in(0,T)$
\begin{align*}
|R_3|
&\leq\sum_{K\in\grid}\norm{\Gradh\deltau}_{L^2(K)}
\Big(\norm{\deltau}_{L^2(K)}\norm{\etau}_{L^\infty(K)}
+\norm{\Pitvu-\overline{\tvu}_K}_{L^\infty(K)}
 \norm{\etau}_{L^2(K)}\Big)\\
&\aleq \norm{\tvu}_{W^{1,\infty}}
 \norm{\deltau}_{L^2}^2
 +h^{k+1}\norm{\tvu}_{W^{1,\infty}}
 \norm{\tvu}_{H^{k+1}}\norm{\deltau}_{L^2},
\end{align*}
where we have used
\[
 \norm{\Pitvu-\overline{\tvu}_K}_{L^\infty(K)}
 \leq\norm{\Pitvu-\tvu}_{L^\infty(K)}
 +\norm{\tvu-\overline{\tvu}_K}_{L^\infty(K)}
 \aleq h\norm{\tvu}_{W^{1,\infty}(K)}.
\]
After integration in time and using Young's inequality, it follows that
\begin{align*}
\left|\int_0^\tau R_3\,\dt\right|
&\leq c_1  \int_0^\tau \RE(\vuh|\Pitvu)(t) \dt \; 
 +c_1 c_2 h^{2k+2},
\end{align*}
where $c_1$ depends $\norm{\tvu}_{L^\infty W^{1,\infty}}$ and $c_2$ depends on $\norm{\tvu}_{L^2 H^{k+1}}$.

Collecting the above estimates into~\eqref{relative-discrete}, we obtain, for every $\tau\in[0,T]$,
\begin{align*}
\RE(\vuh|\Pitvu)(\tau)
+\frac34\int_0^\tau\seminorm{\deltau}_{\vuh}^2\,\dt
&\leq c_1 \int_0^\tau \RE(\vuh|\Pitvu)(t)\,\dt +c_2 h^{2k+1},
\end{align*}
where $c_1$ depends $\norm{\tvu}_{L^\infty W^{1,\infty}}$ and $c_2$ depends on $\norm{\tvu}_{L^\infty W^{1,\infty}}$ and $\norm{\tvu}_{L^2 W^{k+1,\infty}}$. 
Now, Gronwall's lemma yields
\[
 \sup_{0\leq t\leq T}\RE(\vuh|\Pitvu)(t)
 +\int_0^T\seminorm{\deltau}_{\vuh}^2\,\dt
 \aleq h^{2k+1}.
\]
Finally, using the interpolation estimate~\eqref{proj} and the decomposition $\eu=\deltau+\etau$, we obtain
\[
 \sup_{0\leq t\leq T}\RE(\vuh|\tvu)(t)\aleq h^{2k+1},
 \quad\mbox{and} \quad
 \norm{\vuh-\tvu}_{L^\infty L^2}\aleq h^{k+1/2}.
\]
This completes the proof.
\end{proof}


\section{Numerical experiments}\label{con_num}
In this section, we present numerical experiments to illustrate our theoretical convergence results. The method is implemented with the finite element software package NGSolve, available at  \url{https://ngsolve.org/}. 

 \subsection*{Experiment 1} In the first experiment, we study the Taylor--Green flow in the domain $\Omega = [0,2\pi]^2$ with no-flux boundary conditions. The reference solution
\begin{equation*}
    \tvu =
\begin{pmatrix}
    \sin(x_1) \cos(x_2) \\
    -\cos(x_1)\sin(x_2)
\end{pmatrix}\exp(-2t/\lambda), \quad  p= \frac14(\cos(2x_1) +\cos(2x_2))\exp(-4t/\lambda)
\end{equation*}
is driven by the external force $\vf=-(2/\lambda)\exp(-2t/\lambda)\tvu(0)$, where the constant $\lambda$ is set to be $100$. Accordingly, the term $\inner{\vf,\vvarphih}$ is added to the right-hand side of \eqref{skm1}. 
Since the same forcing term is included in both the continuous and discrete formulations, it cancels in the error identity. 
In the consistency argument it produces only a higher-order projection residual and hence does not change the stated rates.

Figure~\ref{fig-TG} presents the streamlines and pressure contours for $t=0,1$ with mesh size $h=0.0926$. It is clear that the vortex structure is well maintained, and the magnitude is slightly dissipated over time, as expected. 
 Table \ref{tab-TG} illustrates the numerical errors and the corresponding convergence orders of the velocity and pressure in the $L^2$-norm at time $T=1$ with time step $\Delta t =1/160$. 
 The observed convergence rates agree with those reported by Guzm\'an et al.~\cite{GSS} for the $RT_k/P_k$ method. Compared with the theoretical result obtained in Theorem~\ref{thm_r2}, where the convergence rate $\order(h^{k+1/2})$ was proved rigorously, the experimental convergence rates are approximately half an order higher. 
\begin{figure}[htbp]
    \centering
    \includegraphics[width=0.45\linewidth]{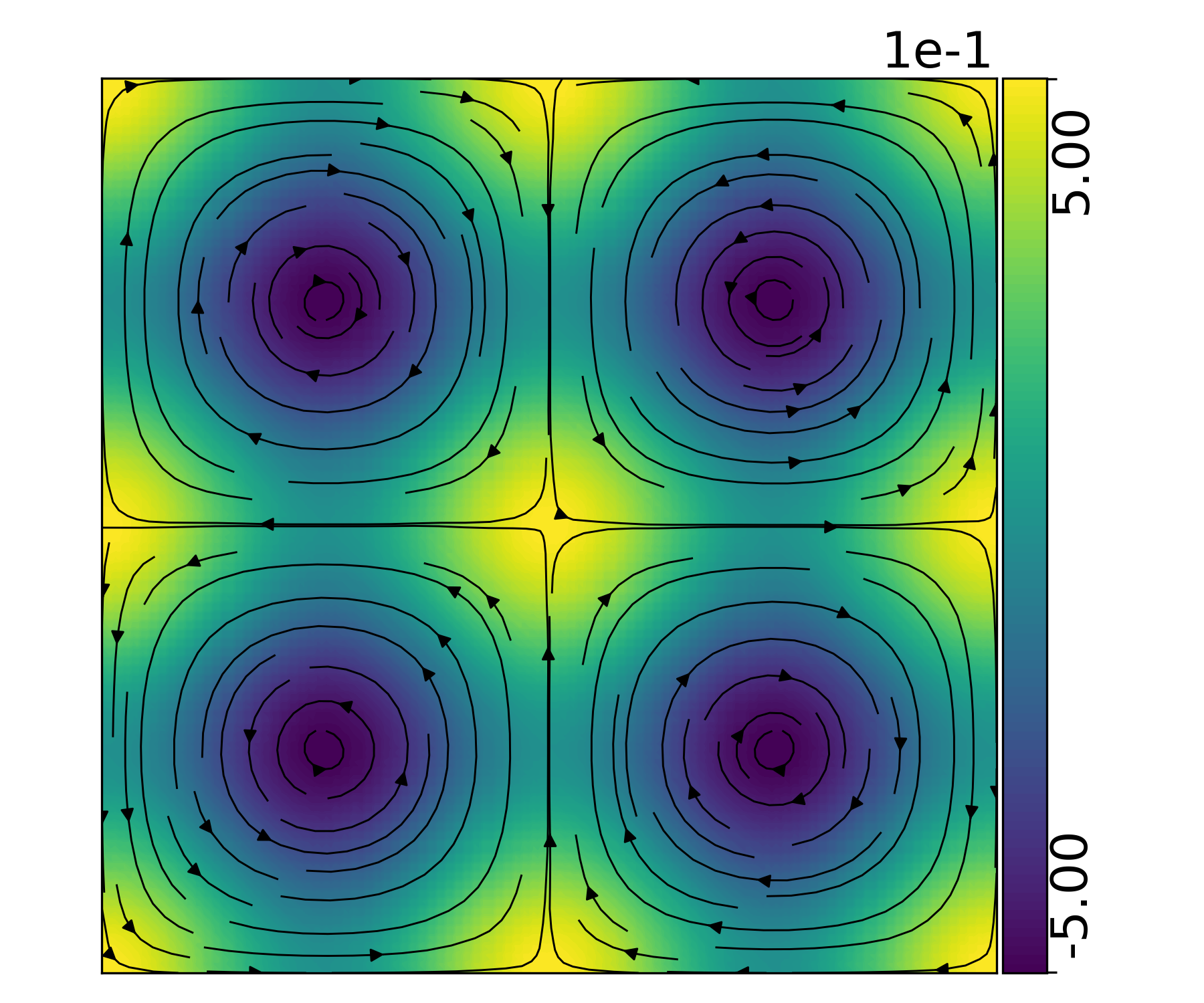}
    \includegraphics[width=0.45\linewidth]{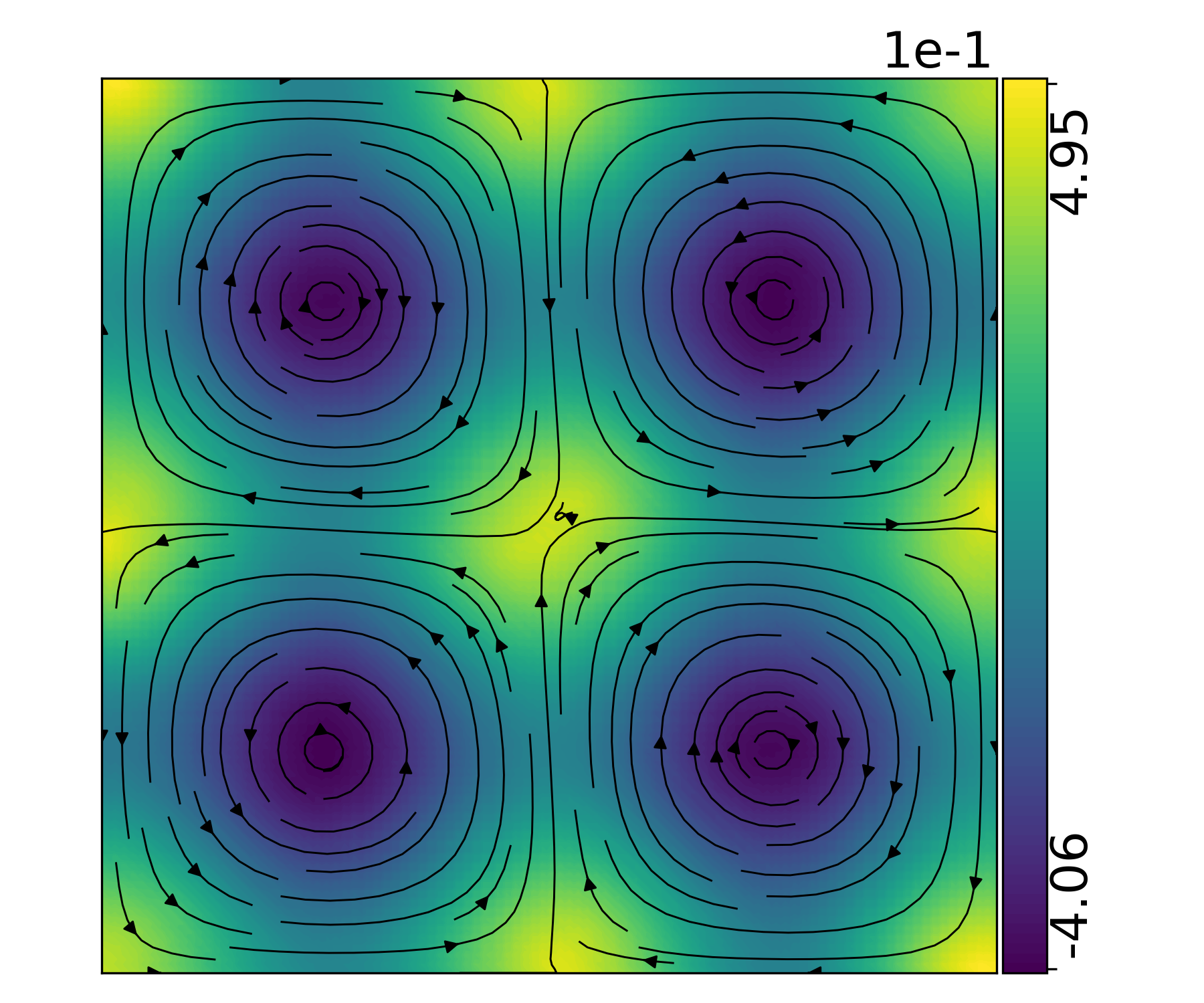}
    \caption{Experiment 1 (Taylor--Green vortex). Time evolution of the streamlines and pressure contours; from left to right, $t=0,1$.}
    \label{fig-TG}
\end{figure}
\begin{table}[htbp]
  \centering
  \caption{Experiment 1 (Taylor--Green vortex). Numerical error and corresponding convergence order with $RT_k/P_k$ elements with $k=0,1$,  $T = 1.0$,  $\Delta t = 6.25\times 10^{-3}$.}
  \label{tab-TG}
  \footnotesize
  \setlength{\tabcolsep}{3pt}
  \begin{tabular}{|c|cc|cc|cc|cc|}
    \hline
        \multirow{2}{*}{$h$} &
     \multicolumn{4}{c|}{$k = 0$} &
      \multicolumn{4}{c|}{$k = 1$} \\
   \cline{2-9}
    & $\|e_{\vu}\|_{L^2}$ & order & $\|e_p\|_{L^2}$ & order &
    $\|e_{\vu}\|_{L^2}$ & order & $\|e_p\|_{L^2}$ & order \\
    \hline
       0.7405 & 1.87e+00 &        & 1.08e+00 &        &
                 1.87e-01 &        & 1.11e-01 &        \\
     0.3702 & 1.11e+00 & 0.759  & 6.86e-01 & 0.653  &
              4.18e-02 & 2.16   & 2.83e-02 & 1.98    \\
     0.1851 & 6.07e-01 & 0.865  & 3.91e-01 & 0.812  &
              1.04e-02 & 2.01   & 7.23e-03 & 1.97    \\
     0.0926 & 3.23e-01 & 0.912  & 2.11e-01 & 0.886  &
              2.62e-03 & 1.99   & 1.97e-03 & 1.88    \\
    \hline
  \end{tabular}
\end{table}

 \subsection*{Experiment 2} In the second experiment, we study the shear layer problem taken from Guzm\'an et al.~\cite{GSS} with periodic boundary conditions and the following initial data
\begin{equation*}
    u_1 =
 \begin{cases}
    \tanh((x_2-\pi/2)/\vr), & x_2 \leq \pi, \\
    \tanh((3\pi/2-x_2)/\vr), & x_2 > \pi,
\end{cases} 
\quad  u_2= \delta \sin(x_1), \delta =0.05, \vr=\pi/15. 
\end{equation*}
In Figure~\ref{fig:exp2-sp}, we present the time evolution of streamlines and pressure contours for the $RT_k/P_k$ method with $k=0,1,2$, from which we observe clearer vortex patterns for $k=1,2$.  To further illustrate the distribution of vortices, we present in Figure~\ref{fig:exp2-vortex} the vorticity $\omega_h:=\operatorname{curl}(\vuh)=\partial_{x_1}u_{2,h}-\partial_{x_2}u_{1,h}$ at time $t=6$ for $k=1,2$. 

\begin{figure}
    \centering
    \includegraphics[width=0.325\linewidth]{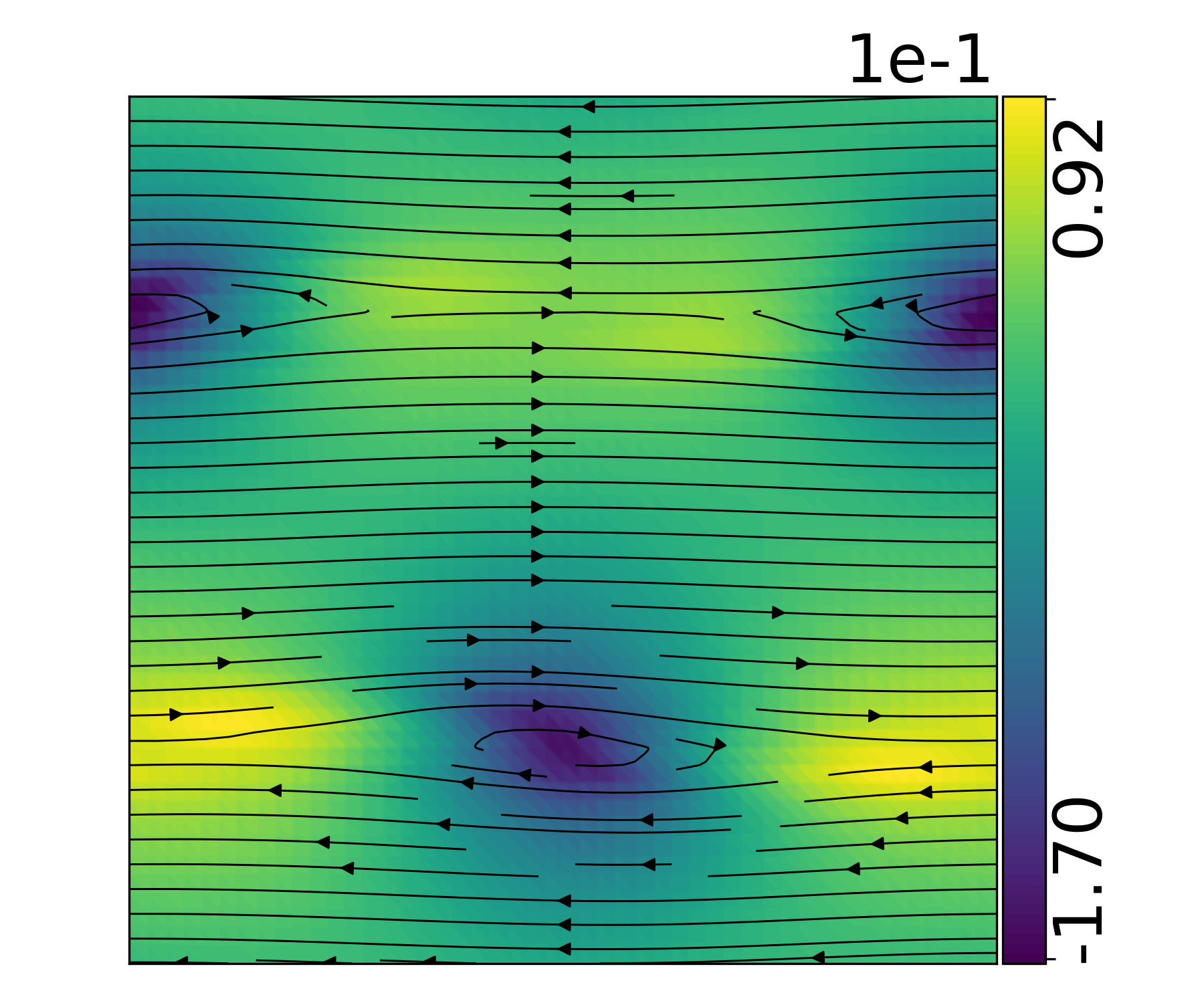}	
    \includegraphics[width=0.325\linewidth]{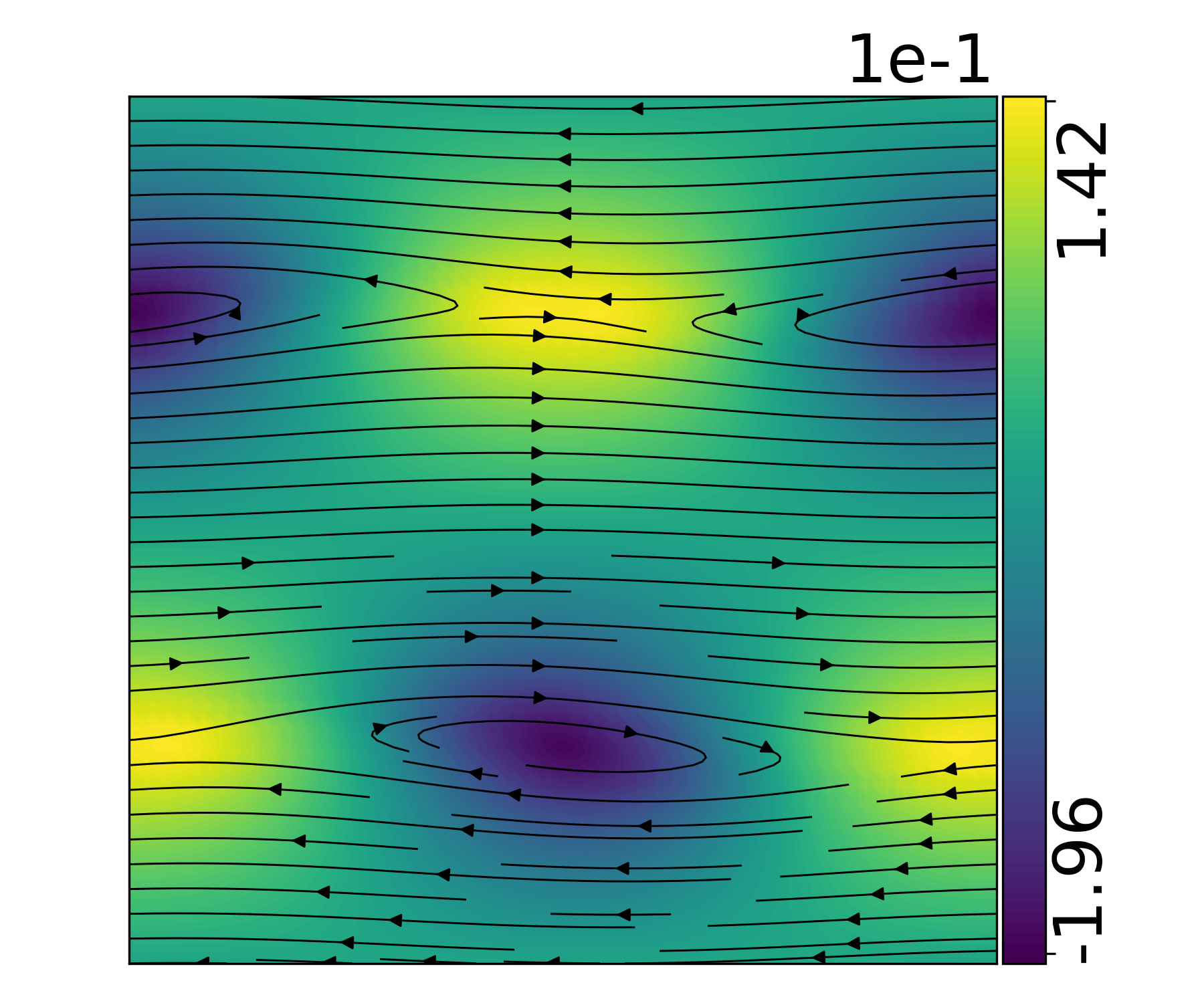}
    \includegraphics[width=0.325\linewidth]{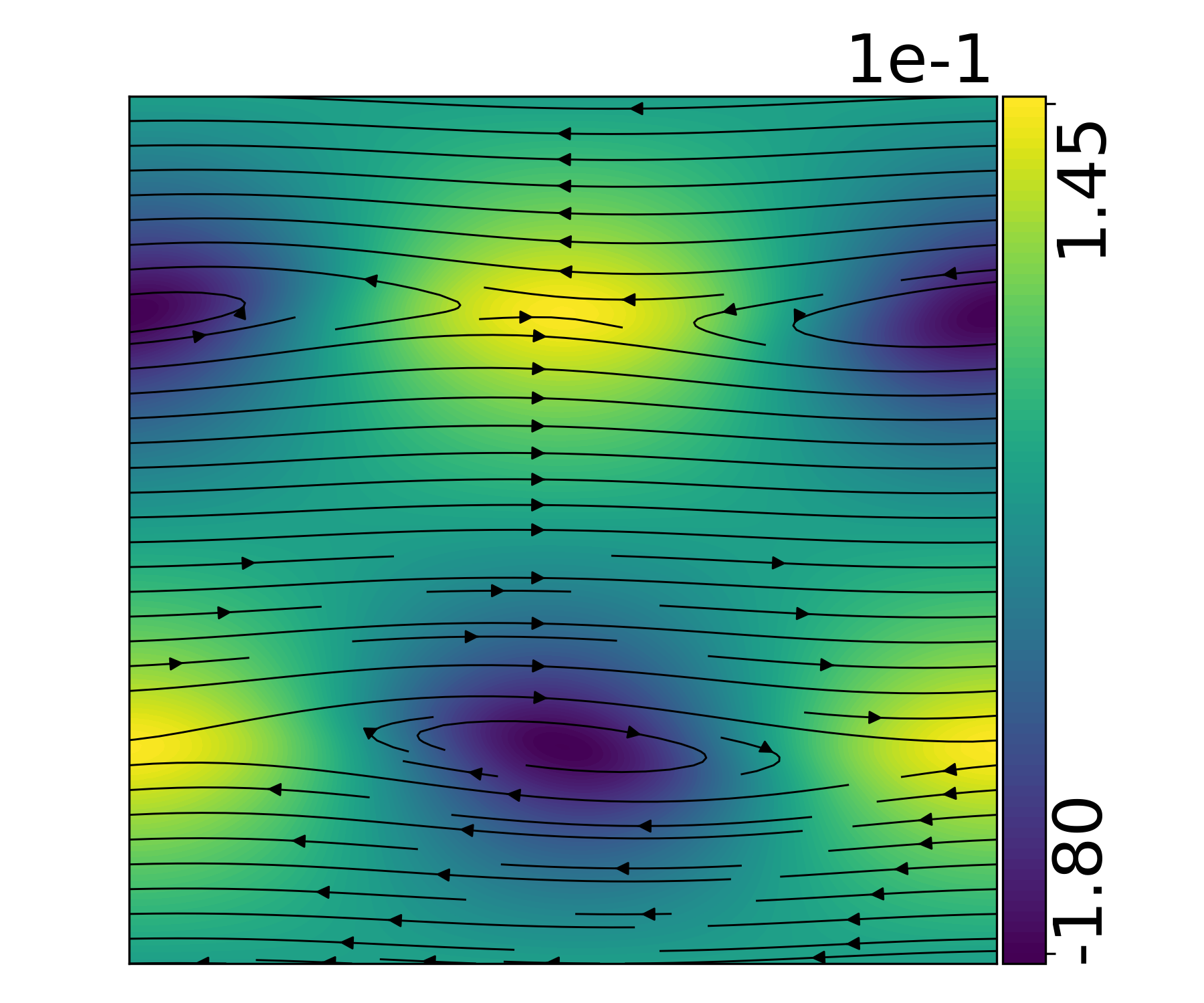}\\
     \includegraphics[width=0.325\linewidth]{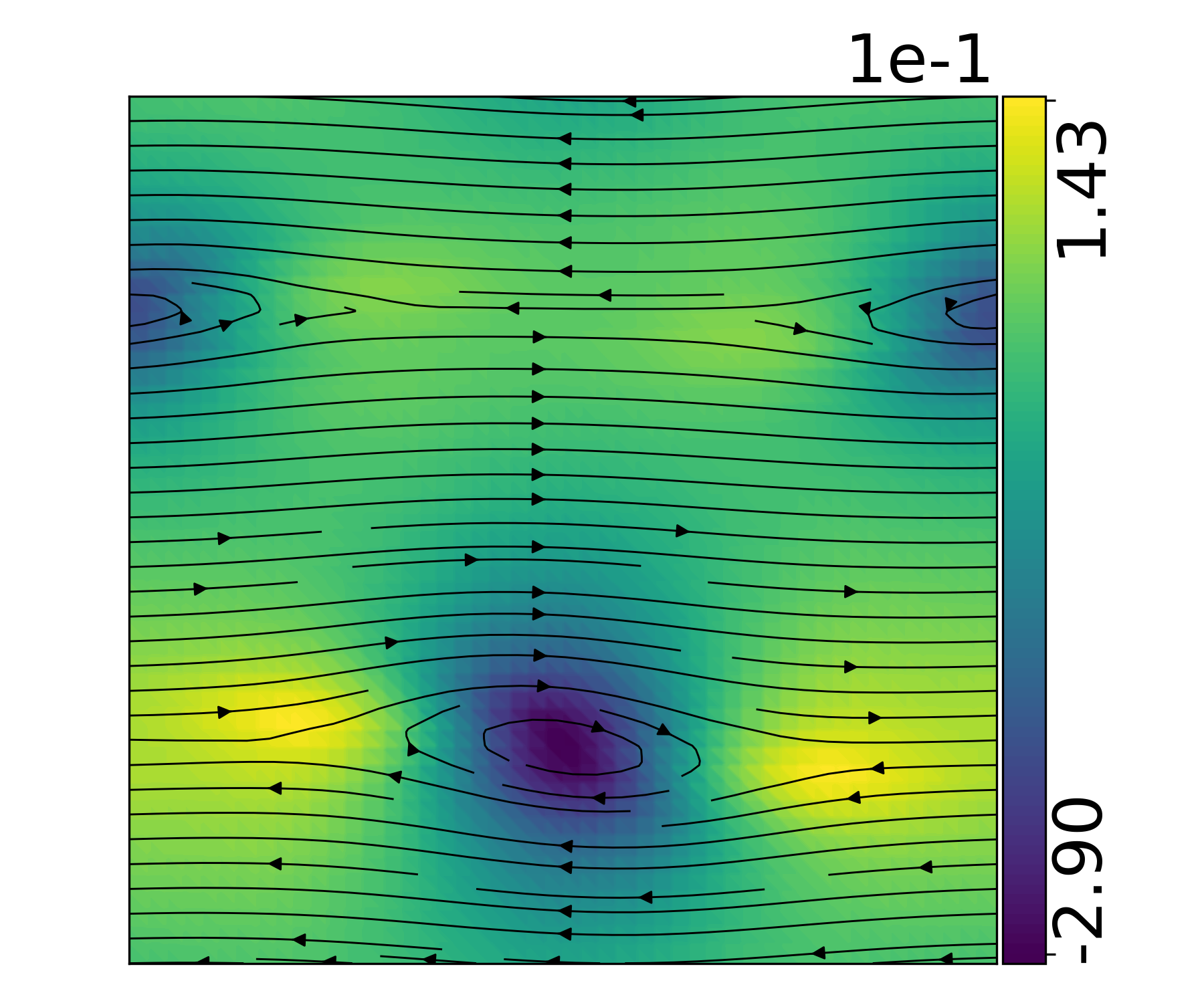}	
    \includegraphics[width=0.325\linewidth]{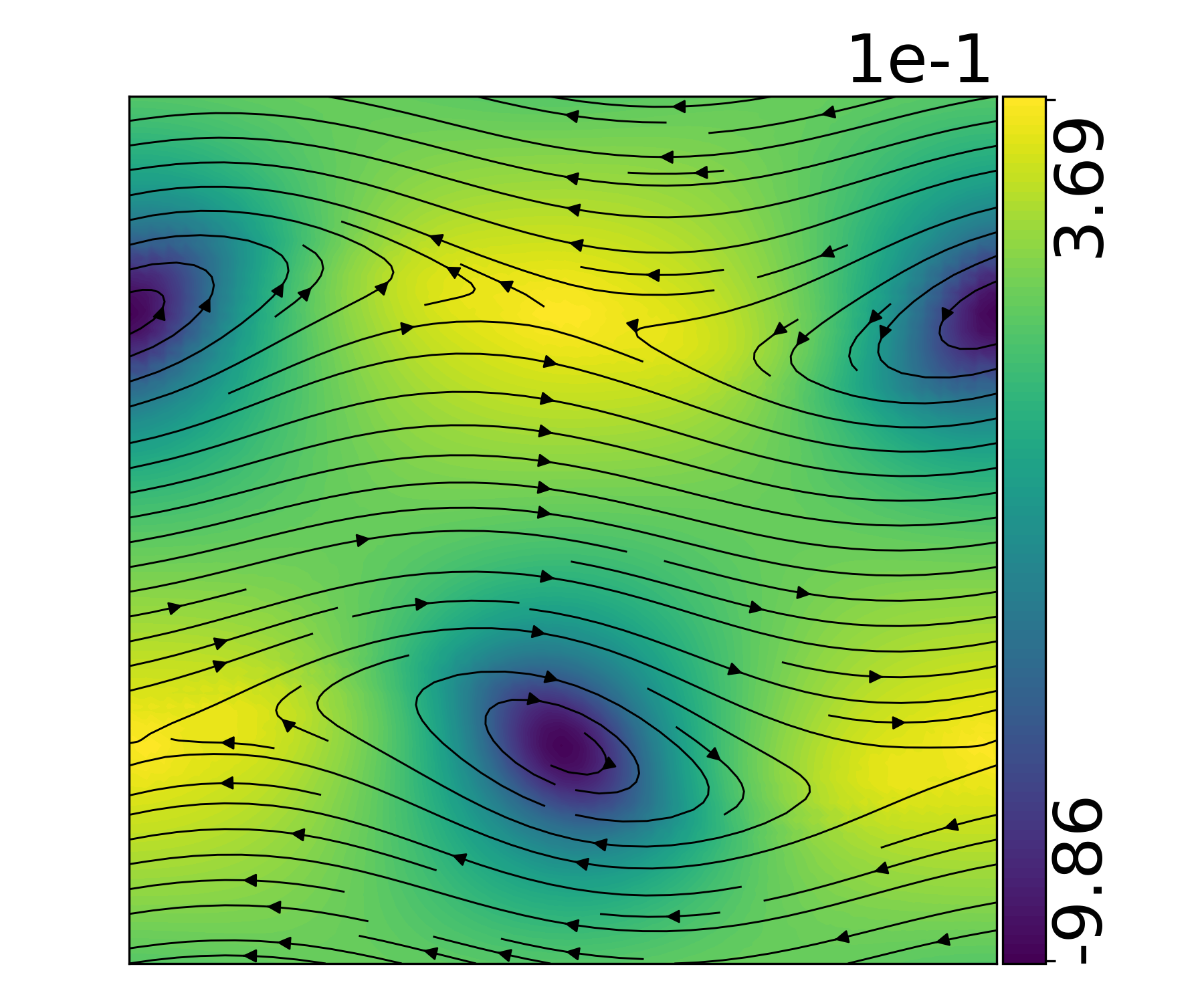}
    \includegraphics[width=0.325\linewidth]{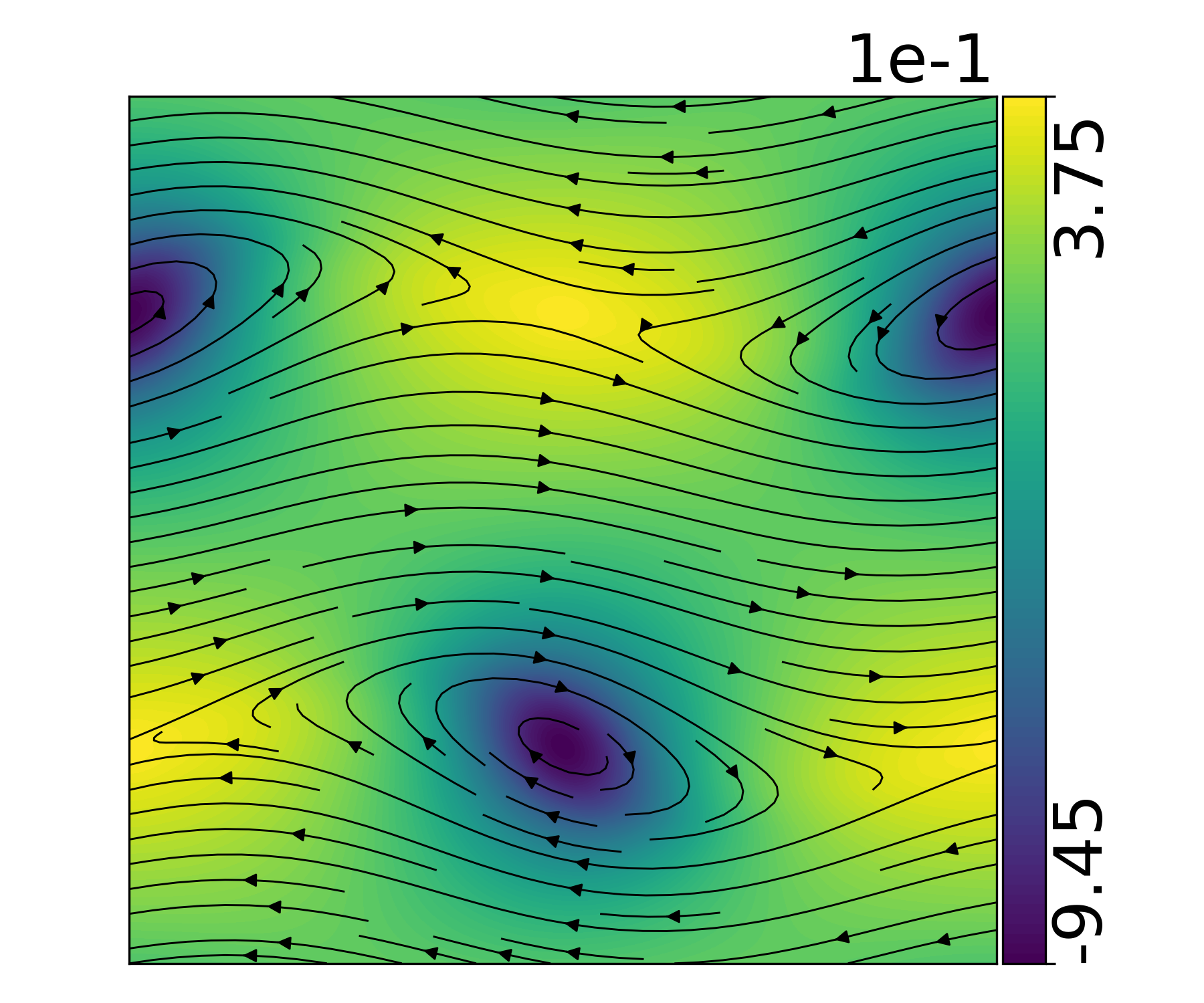}\\
     \includegraphics[width=0.325\linewidth]{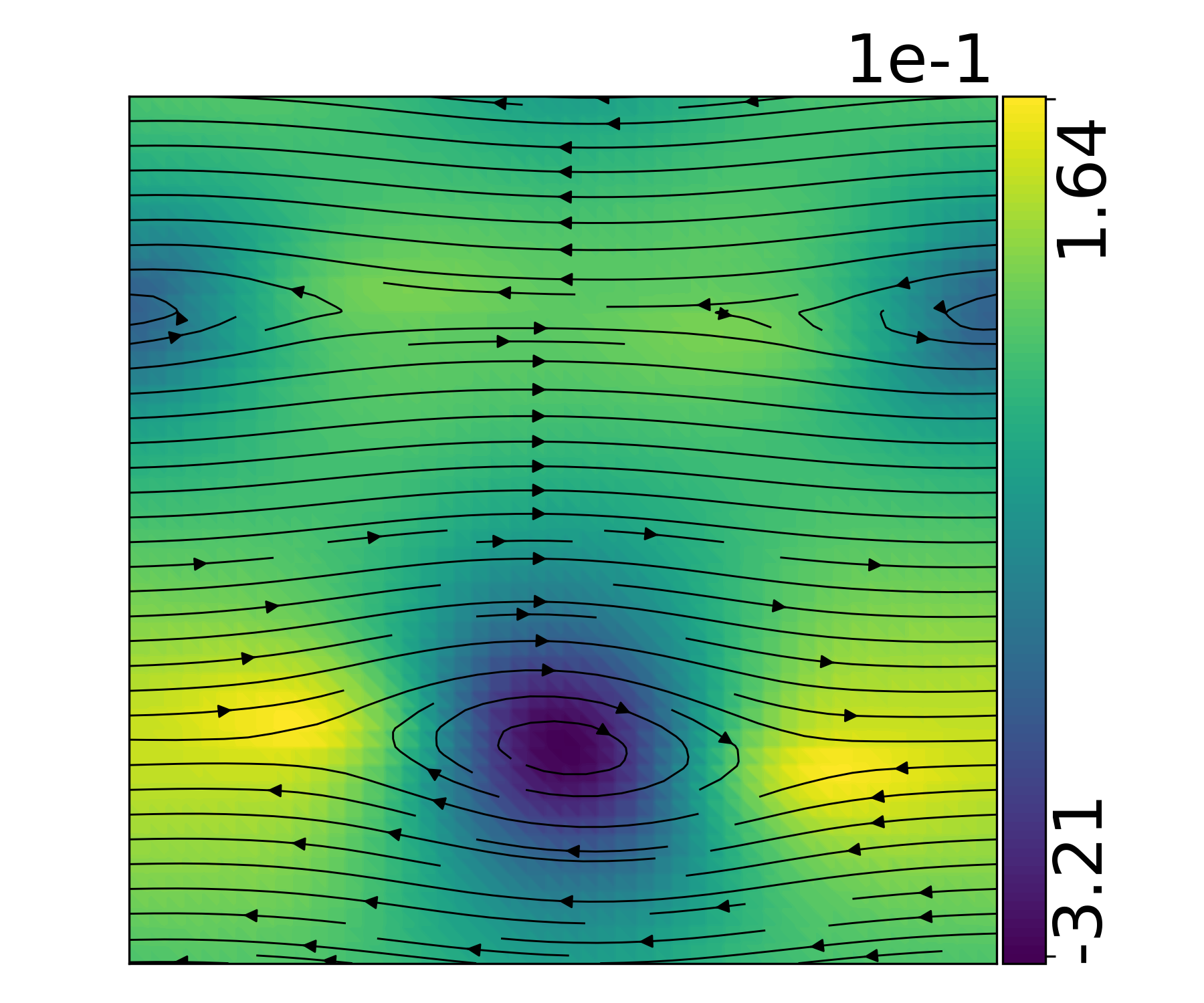}	
    \includegraphics[width=0.325\linewidth]{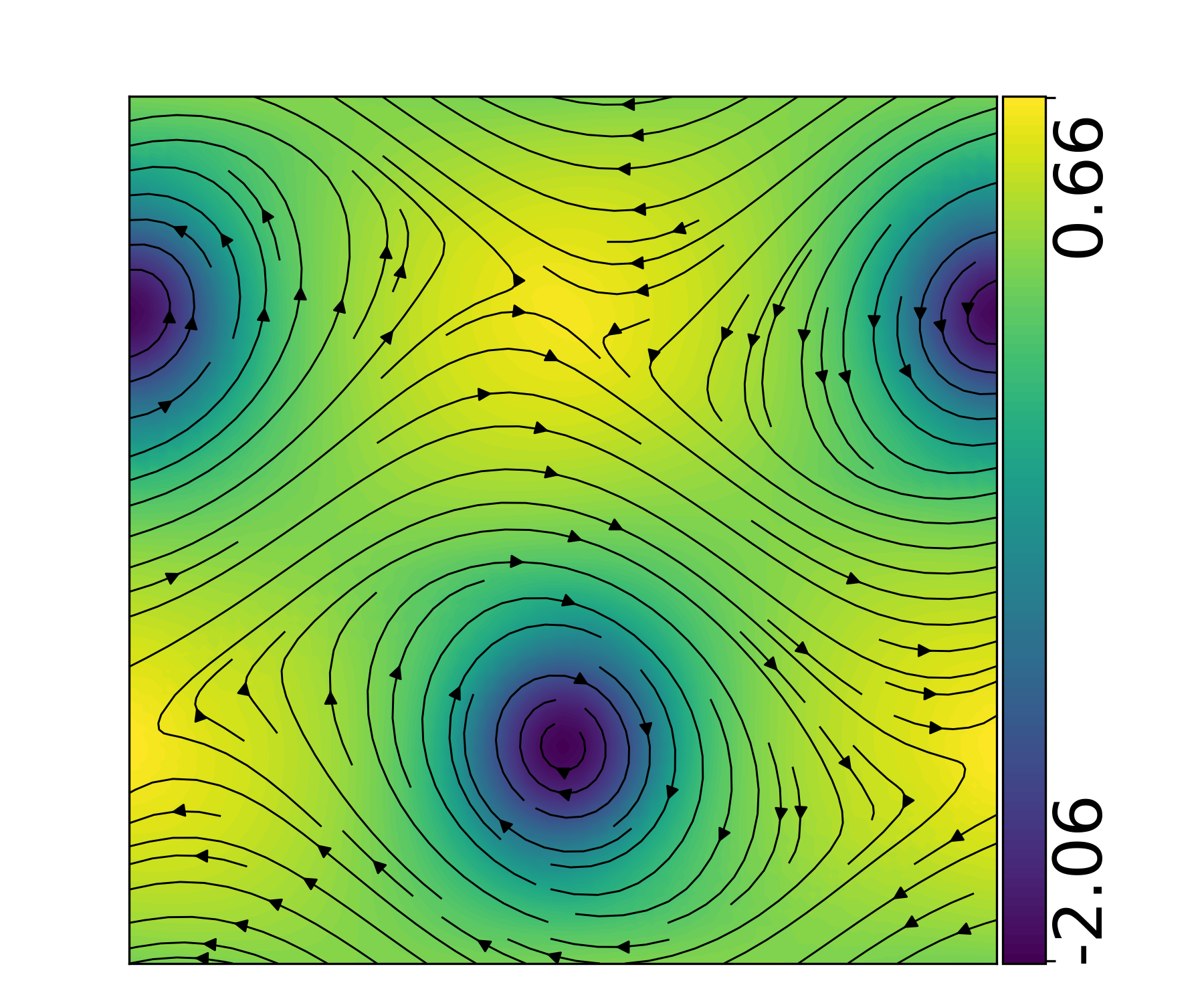}
    \includegraphics[width=0.325\linewidth]{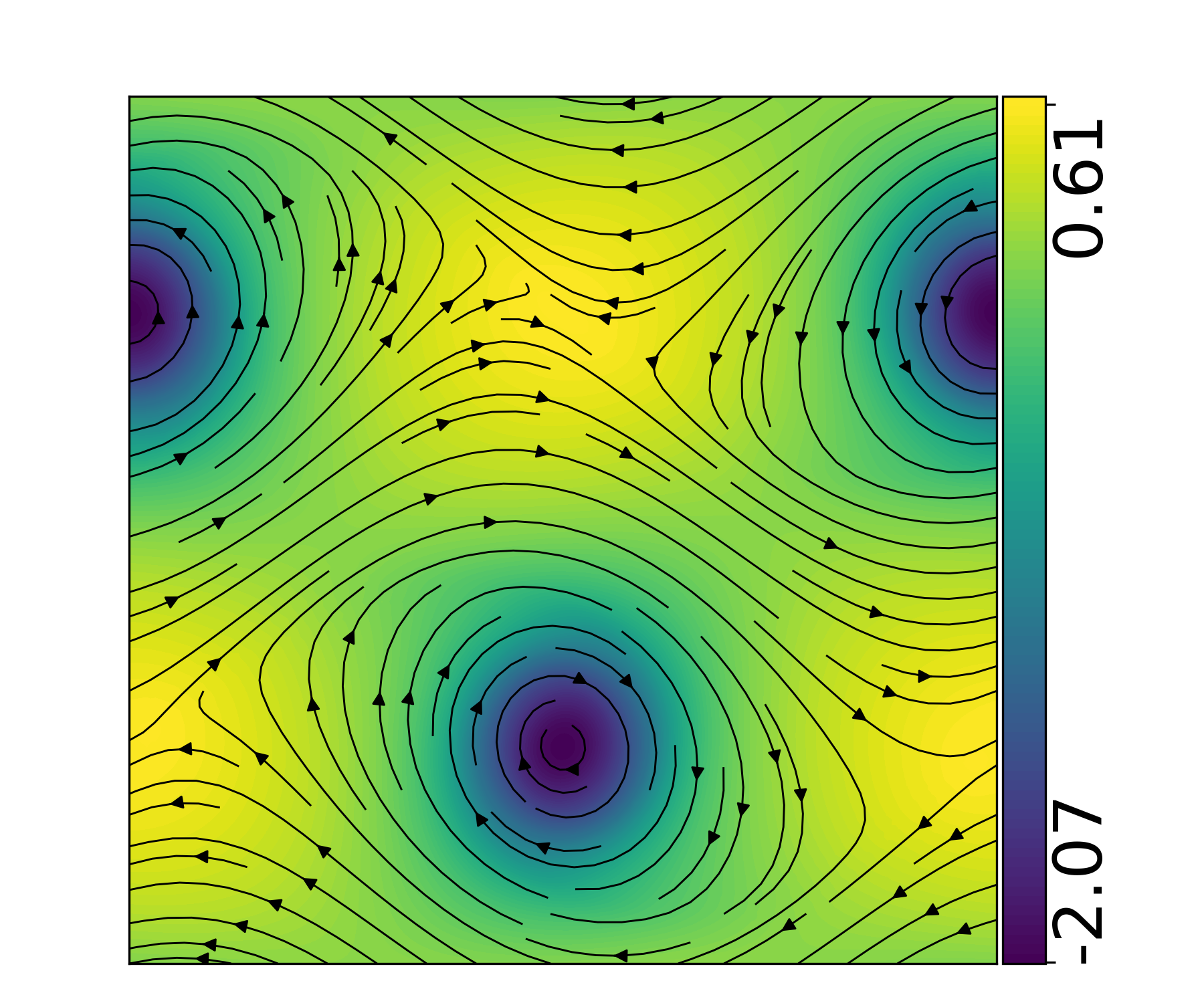}\\
     \includegraphics[width=0.325\linewidth]{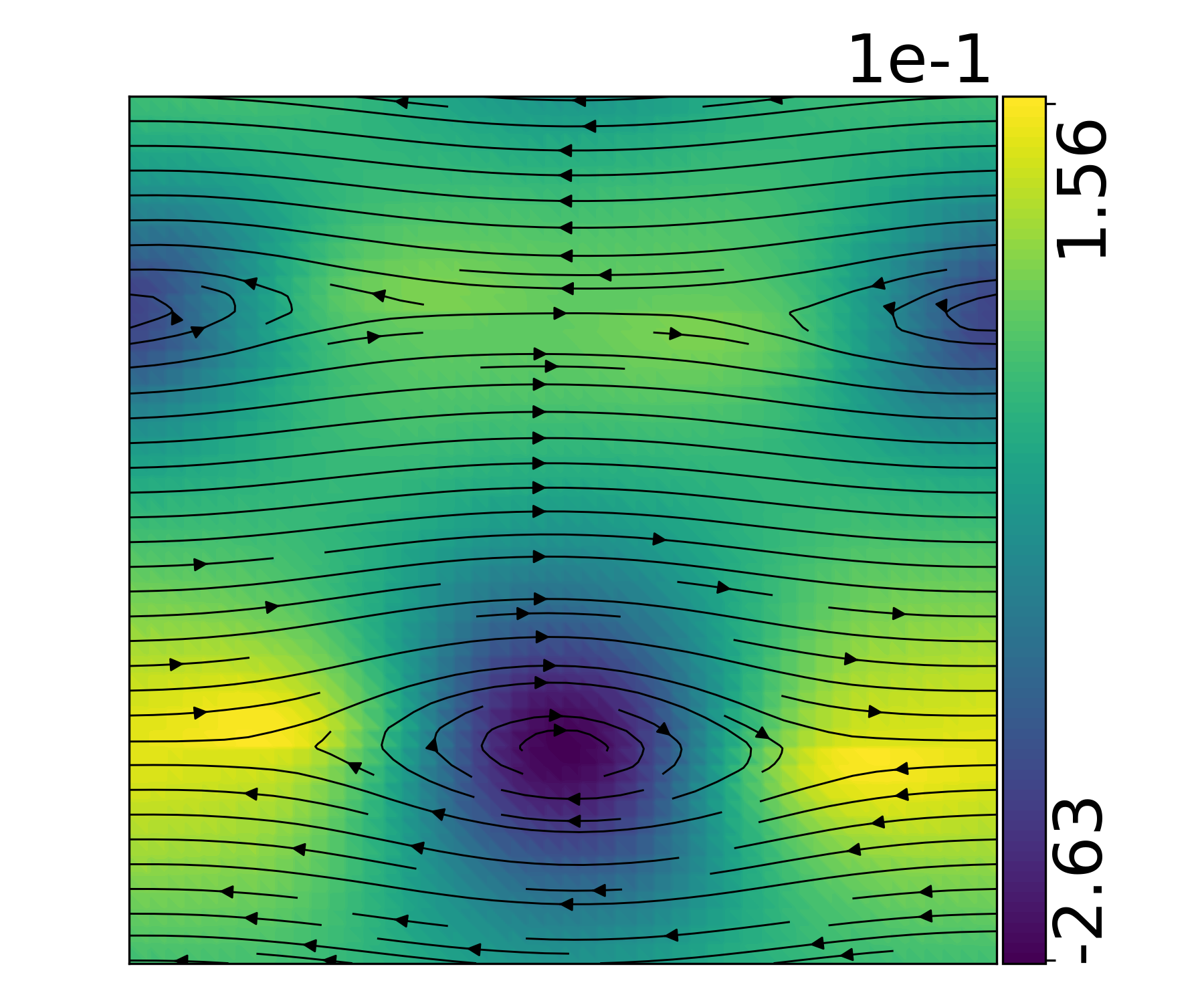}	
    \includegraphics[width=0.325\linewidth]{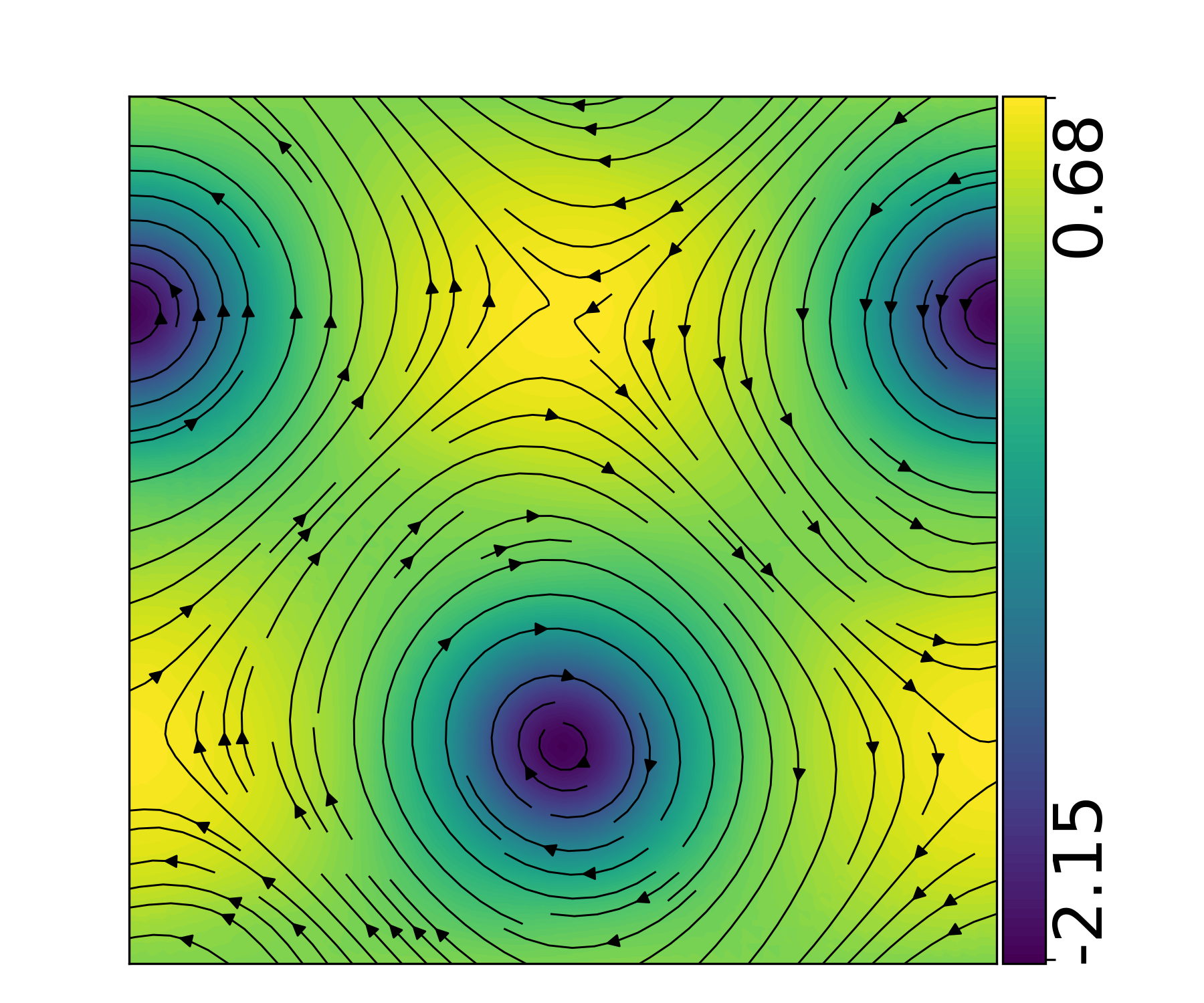}
    \includegraphics[width=0.325\linewidth]{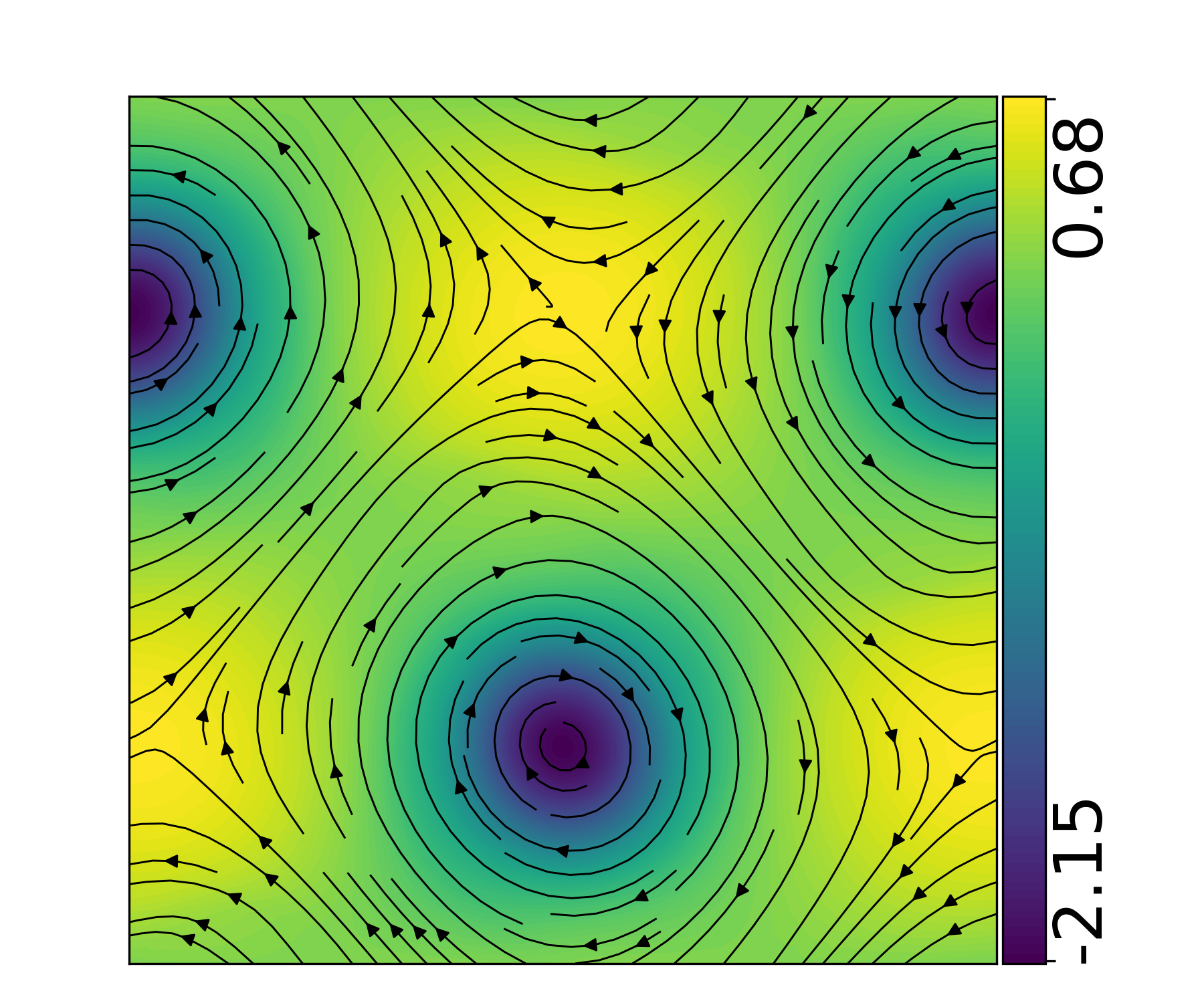}
    \caption{Experiment 2. Streamlines and pressure contours computed by the $RT_k/P_k$ method with $h=0.1851$.  From top to bottom, $t=2,4,6,8$; from left to right, $k=0,1,2$.}
    \label{fig:exp2-sp}
\end{figure}
\begin{figure}
    \centering
    \includegraphics[width=0.425\linewidth]{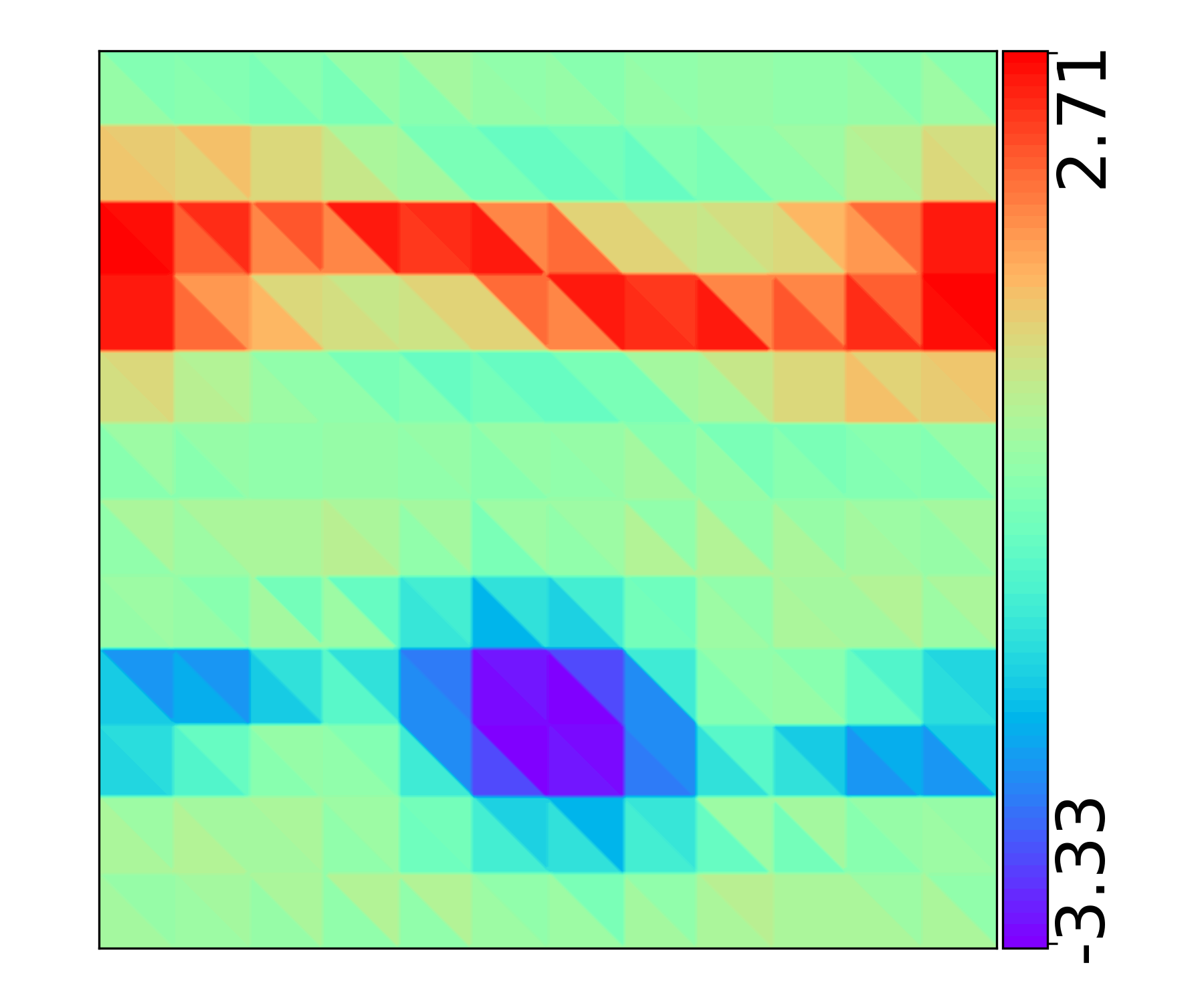}
    \includegraphics[width=0.425\linewidth]{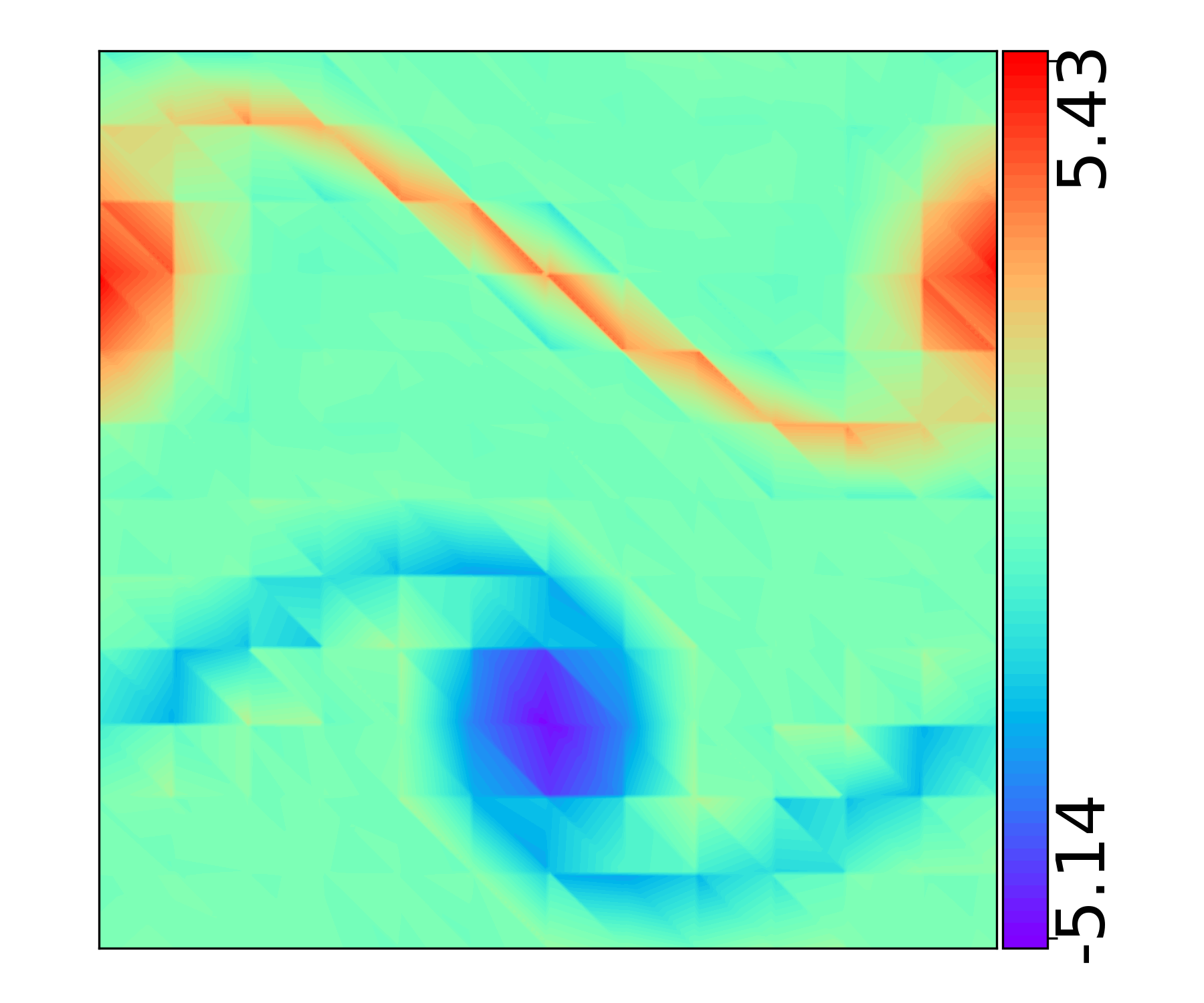}\\
    \includegraphics[width=0.425\linewidth]{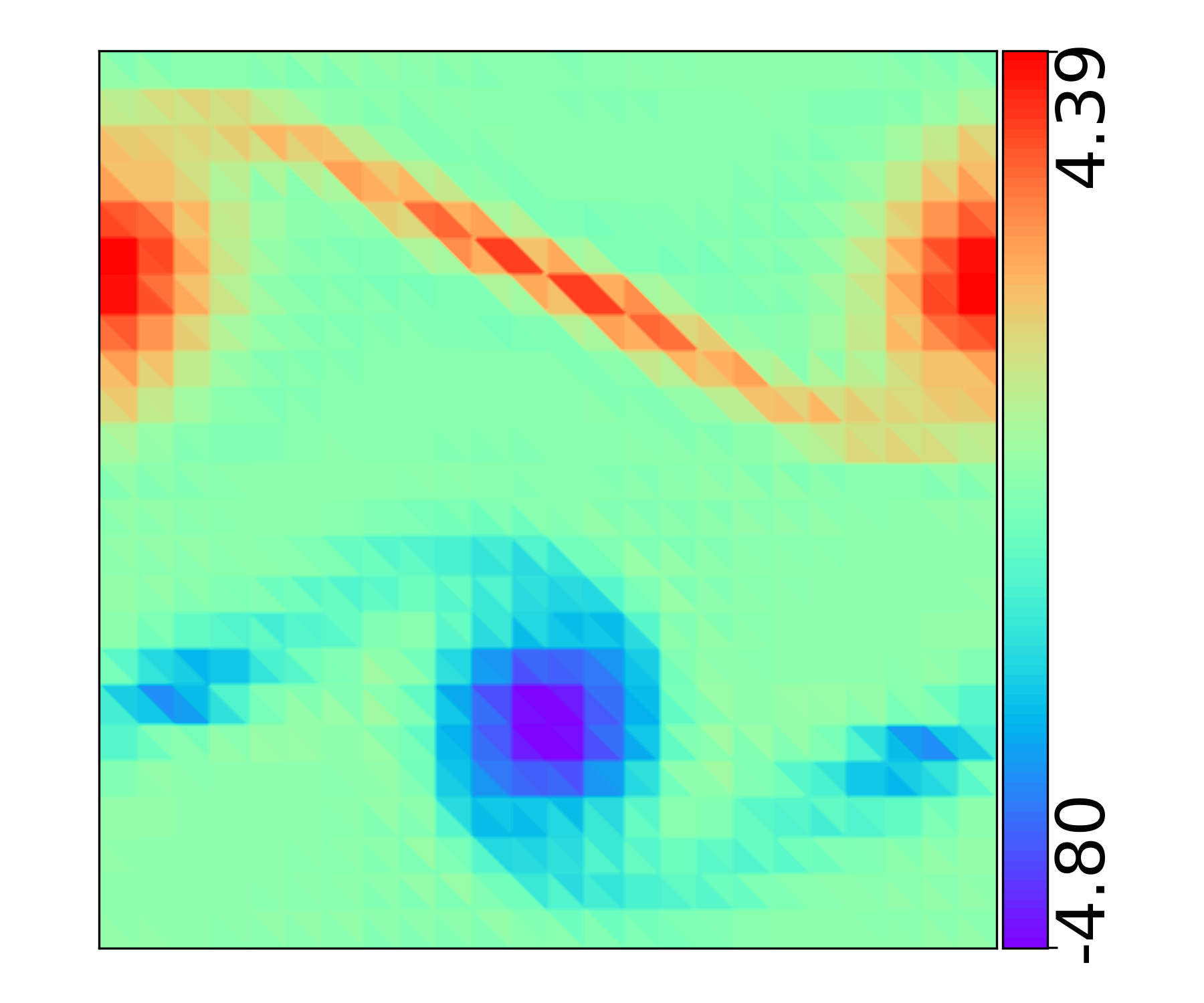}
    \includegraphics[width=0.425\linewidth]{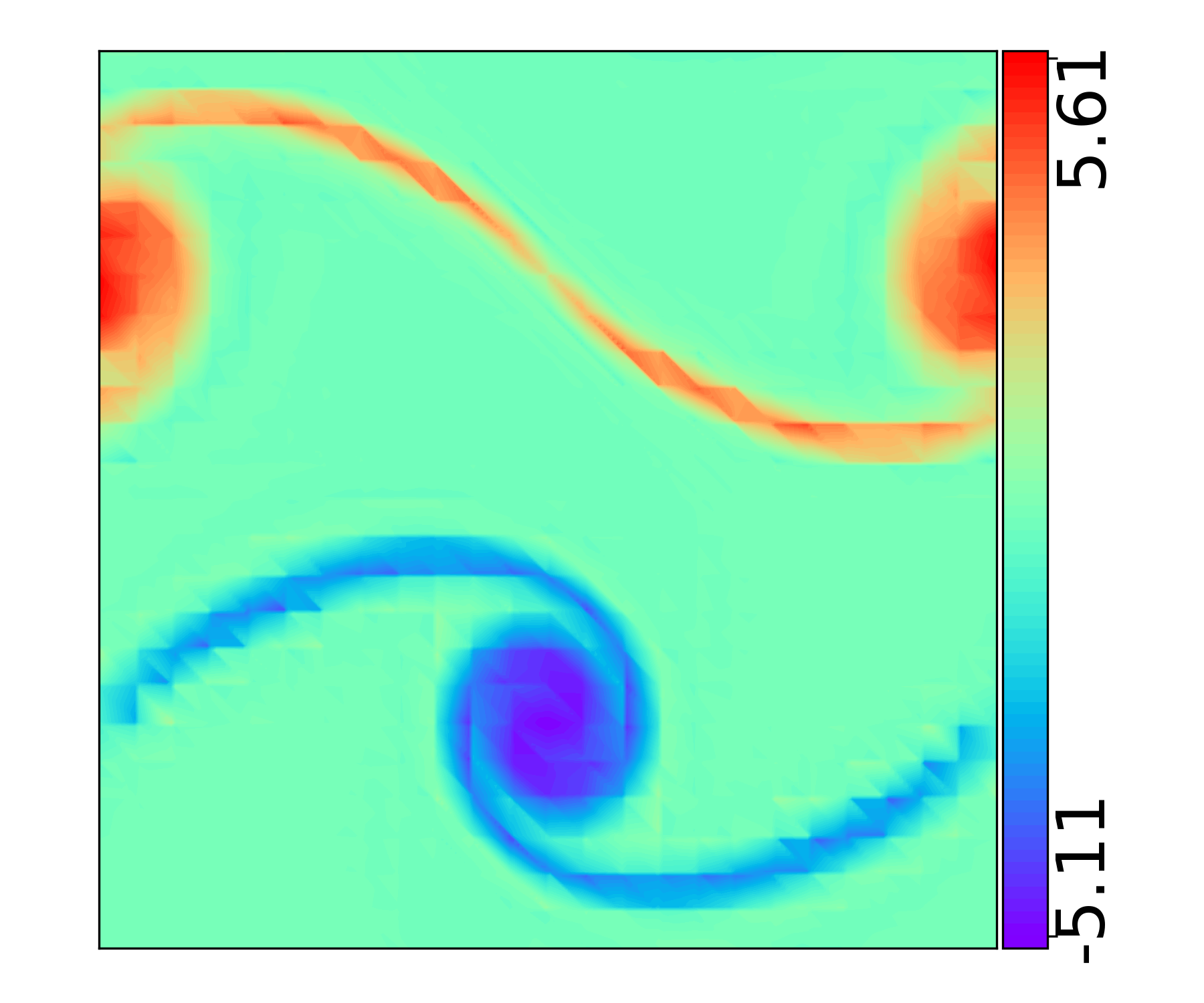}\\
    \includegraphics[width=0.425\linewidth]{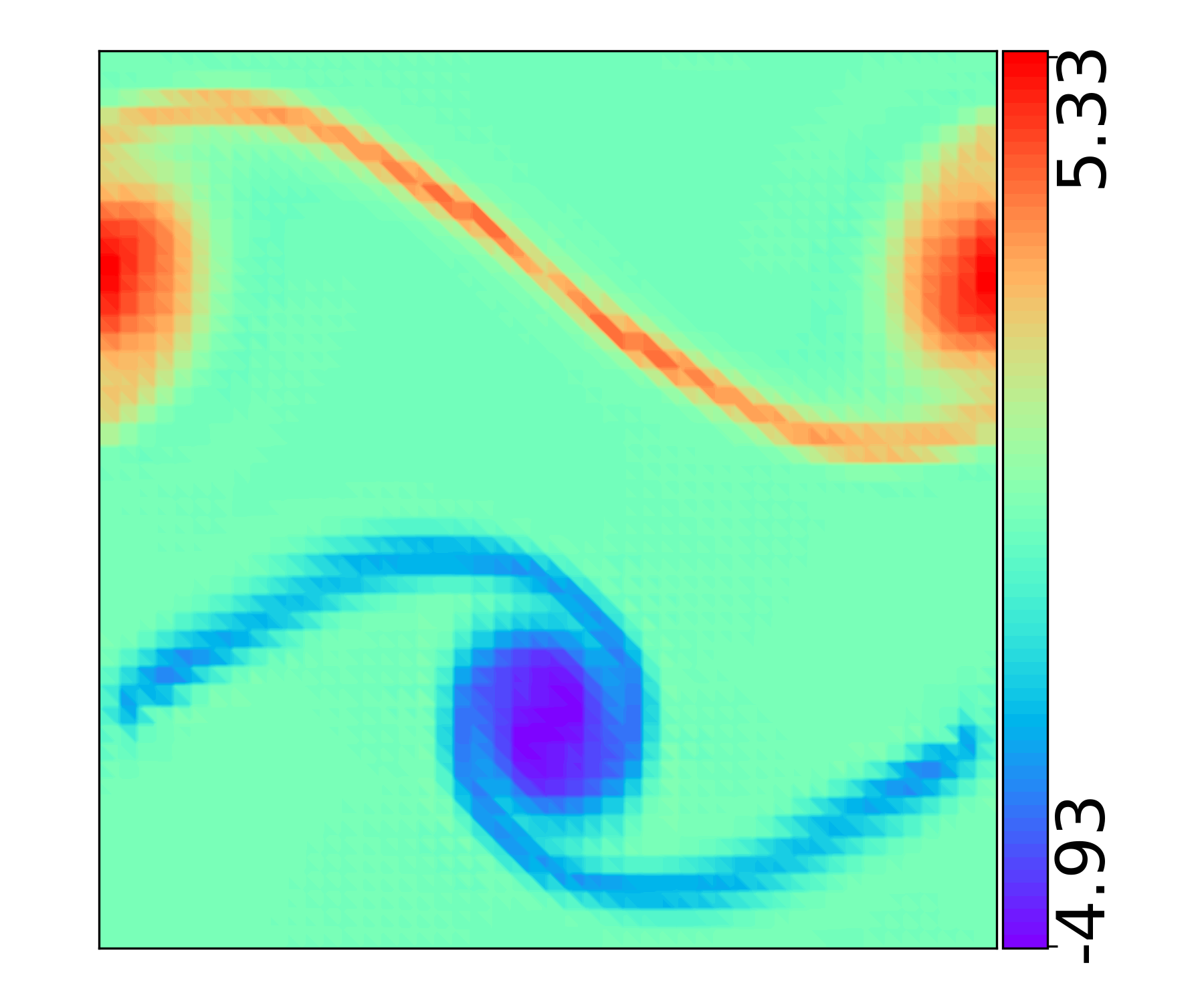}
    \includegraphics[width=0.425\linewidth]{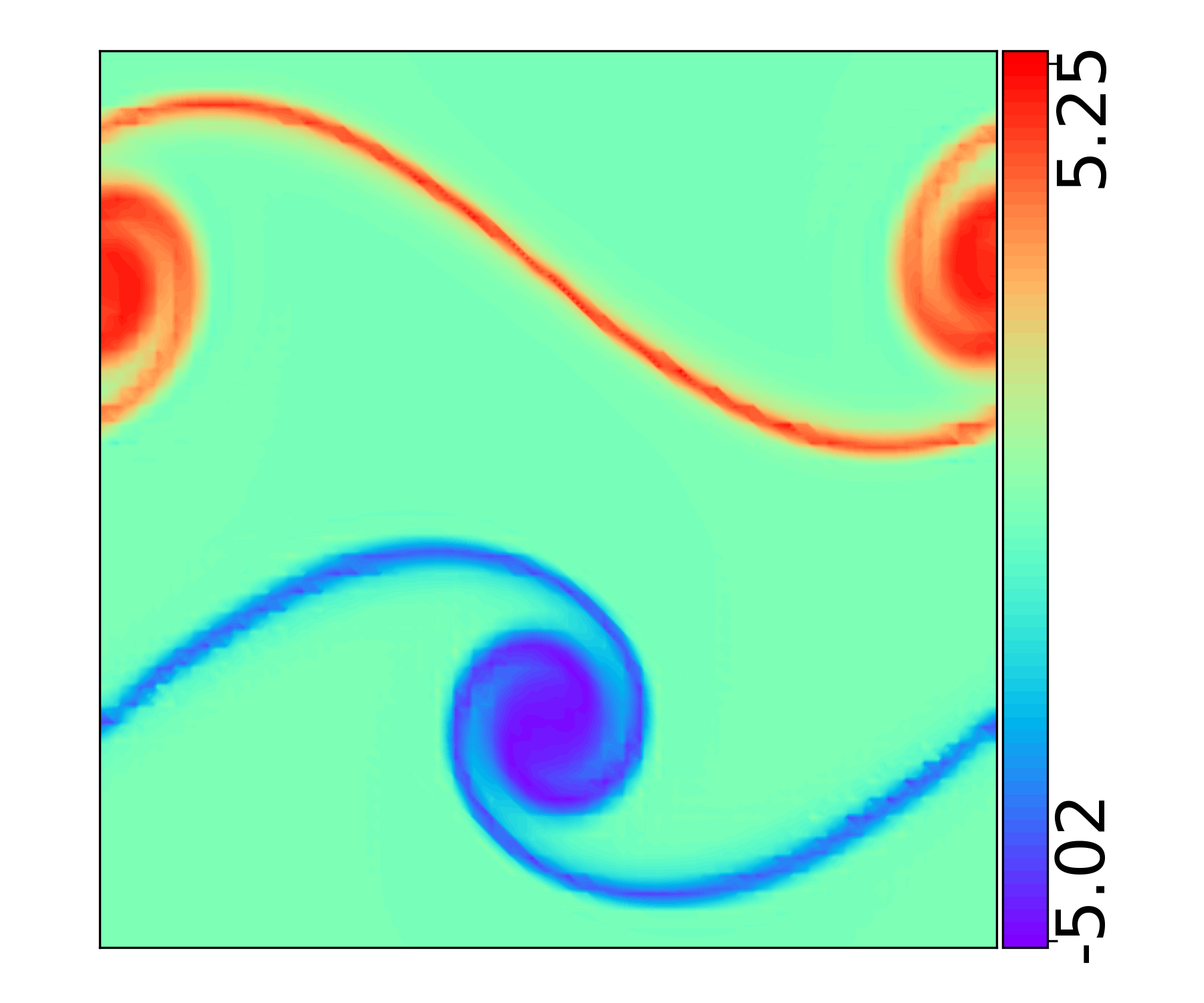}\\
    \includegraphics[width=0.425\linewidth]{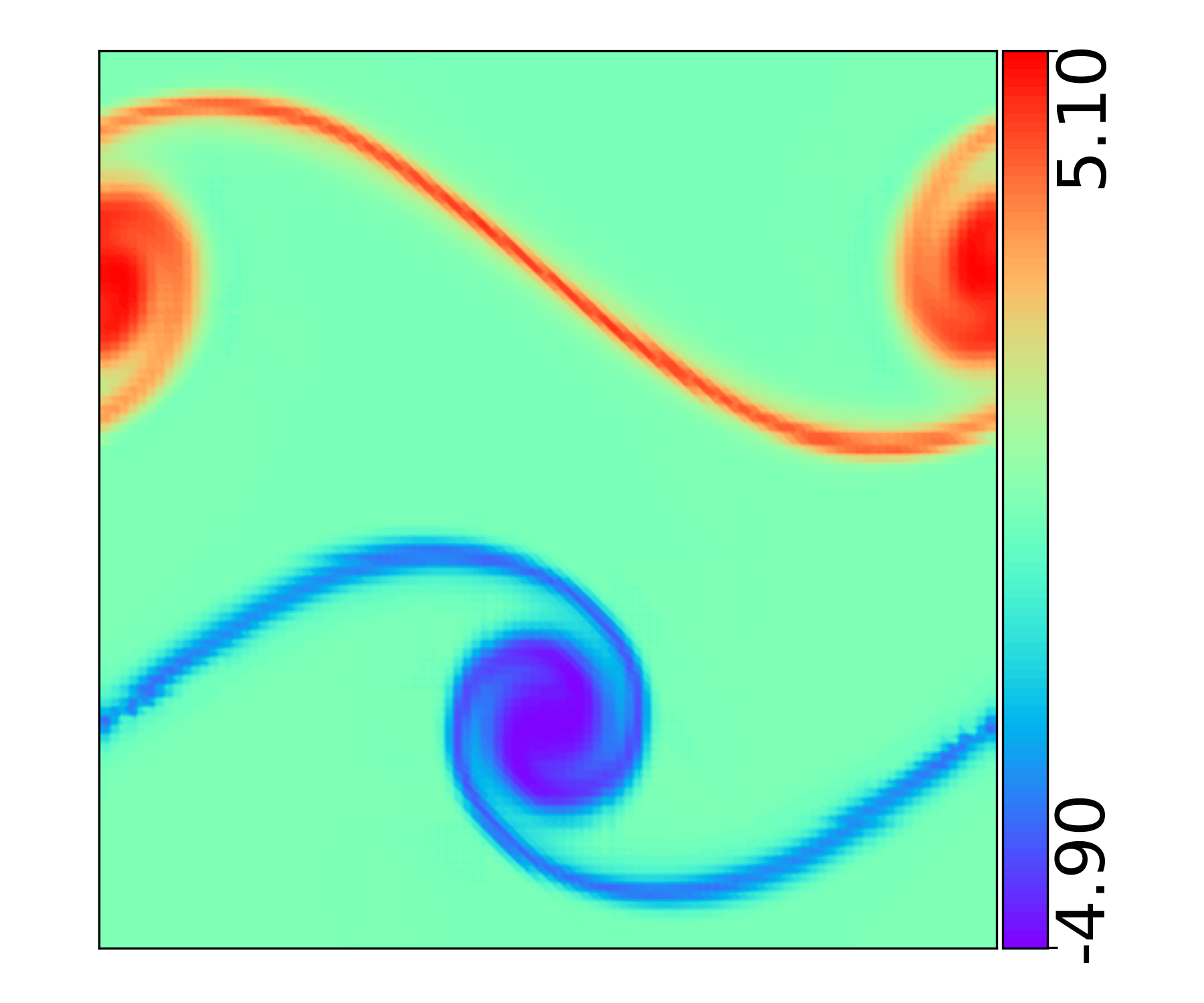}
    \includegraphics[width=0.425\linewidth]{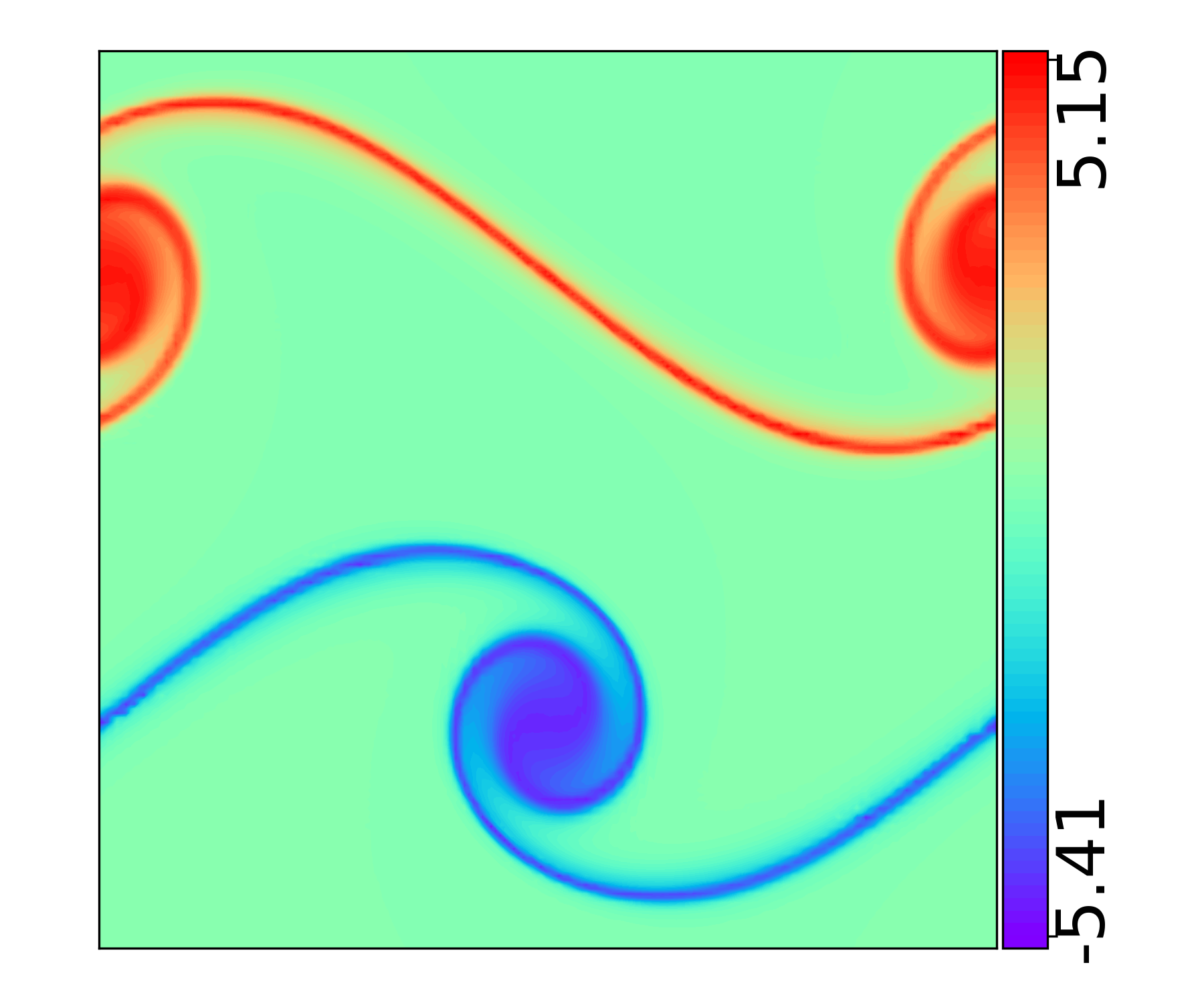}
    \caption{Experiment 2.  Vorticity contours at time $t=6$, computed by the $RT_k/P_k$ method.  From top to bottom, $h=0.7405,0.3702,0.1851,0.0926$; from left to right, $k=1,2$.}
    \label{fig:exp2-vortex}
\end{figure}

\appendix
\appendixnotitle

\subsection{Proof of Lemma \ref{lem_wsuni}}\label{ap_ws}
In this section, we prove the weak--strong uniqueness stated in Lemma \ref{lem_wsuni}. 
\begin{proof}[Proof of Lemma \ref{lem_wsuni}]
Let us recall that a DW solution $\vu$ satisfies \eqref{dws} and a strong solution $\tvu$ satisfies \eqref{dw1} without defect. 
Recall the definition of the relative energy functional
$\RE(\vu|\tvu)= \intO{\frac12 |\vu-\tvu|^2} = \intOB{\frac12 |\vu|^2 -\vu\cdot \tvu+ \frac12 |\tvu|^2}.$  Consequently, 
\begin{align*}
&[\RE (\vu|\tvu)]_0^{\tau} = 
\left[ \intOB{\frac12 |\vu|^2 -\vu \cdot \tvu+ \frac12 |\tvu|^2} \right]_0^{\tau}
 \leq   -\int_{{\Omega}} d\frakE(\tau) -    \left[ \intO{ \vu\cdot \tvu} \right]_0^{\tau}
 \\& =  -\int_{{\Omega}} d\frakE(\tau) 
- \intT{\intOB{\vu \cdot \partial_t\tvu + (\vu \otimes \vu): \Grad\tvu
    }} 
-\intT{\int_{{\Omega}} \Grad\tvu: d\frakR(t)}.
\end{align*}
As $\vu$ and $\tvu$ share the same initial data, $\RE(\vu|\tvu)(0)=0$. 
Thus,
\begin{equation}\label{RE1}
\begin{aligned}
\RE (\vu|\tvu)(\tau) + \int_{{\Omega}} d\frakE(\tau)  
\leq &
- \intT{\intOB{\vu \cdot \partial_t\tvu + (\vu \otimes \vu): \Grad\tvu
    }} 
\\&
-\intT{\int_{{\Omega}} \Grad\tvu: d\frakR(t)}.
\end{aligned}
\end{equation}
Recalling \eqref{dw2} and the fact that $\tvu$ satisfies \eqref{PDE}, we have
\begin{equation}\label{RE2}
    \intO{\vu \cdot \pdt \tvu} = 
-\intO{\vu \cdot (\tvu \cdot \Grad \tvu + \Grad \tp)} = 
-\intO{\tvu \cdot \Grad \tvu \cdot \vu} .
\end{equation}
 Substituting \eqref{RE2} into \eqref{RE1} we obtain
\begin{equation*}
\begin{aligned}
&\RE (\vu|\tvu)(\tau) + \int_{{\Omega}} d\frakE(\tau)  
\leq 
 \intT{\intO{ (\tvu-\vu)  \cdot \Grad\tvu \cdot \vu  }} 
-\intT{\int_{{\Omega}} \Grad\tvu: d\frakR(t)}
\\& = 
\intT{\intO{ (\tvu-\vu)  \cdot \Grad\tvu \cdot (\vu-\tvu)  }} 
-\intT{\int_{{\Omega}} \Grad\tvu: d\frakR(t)}
\\& \leq c \intTB{\RE (\vu|\tvu)(t)+  \int_{{\Omega}} \operatorname{tr}[d\frakR(t)]},
\end{aligned}
\end{equation*}
where the constant $c$ depends on $\norm{\tvu}_{L^\infty W^{1,\infty}}$ and we have used
\[
 \int_\Omega(\tvu-\vu)\cdot\Grad\tvu\cdot\tvu\,\dx
 =\int_\Omega(\tvu-\vu)\cdot\Grad\frac{|\tvu|^2}{2}\,\dx=\int_\Omega \frac{|\tvu|^2}{2}\; \Div (\tvu-\vu) \,\dx =0
\]
in view of the divergence-free condition for both $\vu$ and $\tvu$.
Using the compatibility condition $\operatorname{tr}\frakR\leq\overline d\,\frakE$, we obtain
\[
 \RE(\vu|\tvu)(\tau)+\int_\Omega\dd\frakE(\tau)
 \leq c\int_0^\tau\left(\RE(\vu|\tvu)(t)+\int_\Omega\dd\frakE(t)\right)\dt.
\]
Gronwall's lemma implies that both terms on the left-hand side vanish. Consequently, $\vu=\tvu$, $\frakE=0$, and $\frakR=0$ by \eqref{dw4}. The proof of Lemma~\ref{lem_wsuni} is complete.  
\end{proof}
\subsection{Proof of Theorem \ref{thm_re}}\label{ap_re}
\begin{proof}[Proof of Theorem \ref{thm_re}]
First, recalling the stability condition~\eqref{CA1}, the consistency formulation of the momentum equation~\eqref{CA3}, and the energy conservation of the strong solution, we deduce
\begin{align*}
[\RE (\vuh|\tvu)]_0^{\tau} &= 
\left[ \intOB{\frac12 |\vuh|^2 -\vuh \cdot \tvu+ \frac12 |\tvu|^2} \right]_0^{\tau}
 \leq  -\left[ \intO{ \vuh \cdot \tvu} \right]_0^{\tau}
 \\& =
- \intT{\intOB{\vuh \cdot \partial_t\tvu + (\vuh \otimes \vuh): \Grad\tvu
    }} - e_{\vm}(\tau,h,\tvu). 
\end{align*}
Next, recalling the consistency formulation of the divergence-free condition~\eqref{CA2} and using the fact that $\tvu$ satisfies~\eqref{PDE}, we obtain for a.a. $t\in(0,T)$
\begin{equation*}
- \intO{\vuh \cdot \pdt \tvu} = 
\intO{\vuh \cdot (\tvu \cdot \Grad \tvu + \Grad \tp)} = 
\intO{\tvu \cdot \Grad \tvu \cdot \vuh} +  e_{\vr}(t,h,\tp).
\end{equation*}
Combining the above two relations, we obtain
\begin{equation*}
\begin{aligned}
[&\RE (\vuh|\tvu)]_0^{\tau} 
\leq 
 \intT{\intO{ (\tvu-\vuh)  \cdot \Grad\tvu \cdot \vuh  }} 
- e_{\vm}(\tau,h,\tvu) +  \int_0^\tau e_{\vr}(t,h,\tp)\,\dt
\\& = 
\intT{\intO{ (\tvu-\vuh)  \cdot \Grad\tvu \cdot (\vuh-\tvu)  }} 
+\intT{\intO{ (\tvu-\vuh)  \cdot \Grad\frac{|\tvu|^2}{2}   }} 
\\& \quad - e_{\vm}(\tau,h,\tvu) +  \int_0^\tau e_{\vr}(t,h,\tp)\,\dt
\\& \leq c \intT{\RE (\vuh|\tvu)(t)} 
- e_{\vm}(\tau,h,\tvu)
- \int_0^\tau e_{\vr}\left(t,h,\frac{|\tvu|^2}{2}-\tp\right)\,\dt.
\end{aligned}
\end{equation*}
Here, we have used $\intO{\tvu\cdot\Grad|\tvu|^2} = -\intO{\Div \tvu |\tvu|^2}=0$ and the consistency formulation~\eqref{CA2} with the test function $|\tvu|^2/2$. The constant $c$ depends on $\norm{\tvu}_{L^\infty W^{1,\infty}}$. 
Finally, Gronwall's lemma yields
\begin{align*}
\sup_{0\leq\tau\leq T}\RE(\vuh|\tvu)(\tau)
&\aleq e^{c_1T}\bigg(\RE(\vuh|\tvu)(0)
+c_2\sup_{0\leq\tau\leq T}|e_{\vm}(\tau,h,\tvu)| \\
&\qquad\qquad
+c_2\int_0^T\left|e_{\vr}\left(t,h,\frac{|\tvu|^2}{2}-\tp\right)\right|\,\dt\bigg),
\end{align*}
where $c_1$ and $c_2$  depend on $\norm{\tvu}_{L^\infty W^{1,\infty}}$. 
This completes the proof. 
\end{proof}
\subsection{Proof of Lemma~\ref{lem_Zh}}\label{ap4}
\begin{proof}[Proof of Lemma~\ref{lem_Zh}]
Fix $K\in\grid$ and a point $\vx_K\in K$. We recall the equivalent direct-sum decomposition
\[
 RT_k(K)=P_k^d(K)\oplus(\bm x-\bm x_K)\widetilde P_k(K),
\]
where $\widetilde P_k(K)$ denotes the space of polynomials homogeneous of degree $k$ with respect to $\bm x-\bm x_K$. Thus,
\[
 \vvh=\bm p_k+(\bm x-\bm x_K)q_k,
 \qquad \bm p_k\in P_k^d(K),\quad q_k\in\widetilde P_k(K).
\]
Euler's identity for homogeneous polynomials gives
\[
 \Div\big((\bm x-\bm x_K)q_k\big)
 =d q_k+(\bm x-\bm x_K)\cdot\Grad q_k=(d+k)q_k.
\]
Since $\Div\bm p_k\in P_{k-1}(K)$, the degree-$k$ homogeneous part of $\Div\vvh$ is $(d+k)q_k$. Hence, $\Div\vvh=0$ implies $q_k=0$, and consequently $\vvh|_K=\bm p_k\in P_k^d(K)$. Differentiation yields $\Gradh\vvh|_K\in P_{k-1}^{d\times d}(K)$. For $k=0$, this means that $\vvh$ is constant on each element. This completes the proof.
\end{proof}
\subsection{Proof of Lemma~\ref{lem_exi}}\label{app_exi}
\begin{proof}[Proof of Lemma~\ref{lem_exi}]
For any fixed $h \in (0,1)$, we first consider the velocity equation restricted to $Z_h$. It can be written as the finite-dimensional ordinary differential equation
\[
    \partial_t\vu_h=\mathcal F_h(\vu_h),
\]
where $\mathcal F_h:Z_h\to Z_h$ is defined by 
\[  (\mathcal F_h(\vw_h),\vv_h)_{\mathcal T_h}
    =
    -b(\vw_h,\vw_h;\vv_h)\; 
    \forall\vv_h\in Z_h .\]
The inequality $  \big||a|-|b|\big|\leq |a-b|$
together with the polynomial inverse and trace estimates implies that $\mathcal F_h$ is locally Lipschitz on the finite-dimensional space $Z_h$. Hence, the Picard--Lindel\"of theorem yields a unique local solution.

Testing the equation by $\vu_h$ and using the energy identity gives
\[
    \frac12\frac{\mathrm d}{\mathrm dt}\|\vu_h\|_{L^2}^2
    +\seminorm{\vu_h}_{\vu_h}^2=0 .
\]
Therefore, the solution remains bounded in $Z_h$. Since the local solution of a finite-dimensional ODE can be extended as long as it remains bounded, the maximal existence time is infinite. Thus the velocity solution exists globally.

The pressure is then recovered from the discrete inf--sup condition of the Raviart--Thomas pair. This completes the proof. 
\end{proof}

\end{document}